\newcommand{\R}{\mathbb{R}} 
\newcommand{\cD}{{\cal D}}
\newcommand{\cL}{{\cal L}}
\newcommand{\cO}{{\cal O}}
\newcommand{\cX}{{\cal X}}
\newcommand{\cY}{{\cal Y}}
\newcommand{\bS}{{\bf S}}
\newcommand{\mA}{{\bf A}}
\newcommand{\mB}{{\bf B}}
\newcommand{\mC}{{\bf C}}
\newcommand{\mD}{{\bf D}}
\newcommand{\mH}{{\bf H}}
\newcommand{\mI}{{\bf I}}
\newcommand{\mK}{{\bf K}}
\newcommand{\mM}{{\bf M}}
\newcommand{\mN}{{\bf N}}
\newcommand{\mP}{{\bf P}}
\newcommand{\mS}{{\bf S}}
\newcommand{\mU}{{\bf U}}
\newcommand{\mW}{{\bf W}}
\newcommand{\mX}{{\bf X}}
\newcommand{\mZ}{{\bf Z}}
\newcommand{\eqdef}{\overset{\text{def}}{=}} 
\newcommand{\dotprod}[1]{\left< #1\right>} 
\newcommand{\norm}[1]{\lVert#1\rVert}      
\DeclareMathOperator{\Prob}{Prob}
\DeclareMathOperator{\prox}{prox}       
\newcommand{\Diag}[1]{\mathbf{Diag}\left( #1\right)}
\providecommand{\kernel}[1]{{\rm Null}\left( #1\right)}
\providecommand{\range}[1]{{\rm Range}\left( #1\right)}
\providecommand{\trace}[1]{{\rm Trace}\left( #1\right)}
\newcommand{\Exp}[1]{{\rm E}\left[#1\right] }    
\newcommand{\EE}[2]{{\rm E}_{#1}\left[#2\right] }
\newtheorem{assumption}{Assumption}
\newtheorem{example}{Example}
\newtheorem{remark}{Remark}
\newcommand{\condnum}{\zeta}
\newcommand{\mb}{\tau} 
\newcommand{\sdim}{q}
\title{\bf Stochastic Reformulations of Linear Systems: \\  Algorithms and Convergence Theory
}
\author{Peter Richt\'{a}rik\thanks{King Abdullah University of Science and Technology (KAUST), Thuwal, Kingdom of Saudi Arabia, \email{peter.richtarik@kaust.edu.sa}} \and Martin Tak\'{a}\v{c}\thanks{Lehigh University, Bethlehem, Pennsylvania, USA, \email{mat614@lehigh.edu}}}
\begin{document}

\maketitle

\begin{abstract}
We develop a  family of reformulations of an arbitrary consistent linear system into  a {\em stochastic problem}. The reformulations are governed by two user-defined parameters: a positive definite matrix defining a norm, and an arbitrary discrete or continuous distribution over random matrices. Our reformulation has several equivalent interpretations, allowing for researchers from various communities to leverage their domain specific insights. In particular, our reformulation can be equivalently seen as a stochastic optimization problem, stochastic linear system, stochastic fixed point problem and a probabilistic intersection problem. We prove sufficient, and necessary and sufficient conditions for the reformulation to be exact.  Further, we propose and analyze three stochastic algorithms for solving the reformulated problem---basic, parallel and accelerated methods---with global linear convergence rates. The rates can be interpreted as  condition numbers of a matrix which depends on the system matrix and on the reformulation parameters. This gives rise to a new phenomenon  which we call {\em stochastic preconditioning}, and which refers to the problem of finding parameters (matrix and distribution) leading to a sufficiently small condition number. Our basic method can be equivalently interpreted as  stochastic gradient descent, stochastic 
Newton method, stochastic proximal point method, stochastic fixed point method, and stochastic projection method,  with fixed stepsize (relaxation parameter), applied to the reformulations. 
\end{abstract}

\begin{keywords}
linear systems, stochastic methods, iterative methods, randomized Kaczmarz, randomized Newton, randomized coordinate descent, random pursuit, randomized fixed point.
\end{keywords}

\begin{AMS}  15A06, 15B52, 65F10,  68W20,  65N75, 65Y20, 68Q25, 68W40, 90C20 \end{AMS} 

\section{Introduction} \label{sec:introduction}

Linear systems form the backbone of most numerical codes used in academia and industry. With the advent of the age of big data, practitioners are looking for ways to solve linear systems of unprecedented sizes. The present work is motivated by the need to design such algorithms.  As an algorithmic tool enabling faster computation, {\em randomization}  is well developed, understood and appreciated in several fields, typically traditionally of a ``discrete'' nature, most notably  theoretical computer science \cite{RA-book}. However,   probabilistic ideas are also increasingly and successfully penetrating ``continuous'' fields, such as numerical linear algebra \cite{Spielman2006, DKM:RandNLA1, DKM:RandNLA2, Strohmer2009, LSRN,  RandPrecond, Blendenpick}, optimization \cite{Leventhal:2008:RMLC, Lan2009, Nesterov:2010RCDM, UCDC, S.U.StichC.L.Muller2014,  ALPHA}, control theory \cite{Calafiore2000, Calafiore-Polyak2001, randControl},  machine learning \cite{SAG, PCDM, SVRG, APPROX, Hydra},  and signal processing \cite{Combettes-Pesquet2015, mS2GD}.

In this work\footnote{All theoretical results in this paper were obtained by August 2016 and a first draft was circulated to a few selected colleagues in September 2016. The first author gave several talks on these results before this draft was made publicly available on arXiv: {\em  Linear Algebra and Parallel Computing at the Heart of Scientific Computing}, Edinburgh, UK (Sept 21, 2016), {\em Seminar on Combinatorics, Games and Optimisation}, London School of Economics, London, UK, (Nov 16, 2016), {\em Workshop on Distributed Machine Learning}, T\'{e}l\'{e}com ParisTech, Paris, France (Nov 25, 2016),  Skoltech Seminar, Moscow, Russia (Dec 1, 2016), {\em BASP Frontiers Workshop}, Villars-sur-Ollon, Switzerland (Feb 1, 2017), and {\em SIAM Conference on Optimization}, Vancouver, Canada (May 22, 2017). In addition, the first author has included the results of this paper in the MSc/PhD course  {\em Modern optimization methods for big data problems}, delivered in Spring 2017 at the University of Edinburgh, as an introduction into the role of randomized decomposition in linear algebra, optimization and machine learning. All main results of this paper were distributed to the students in the form of slides.}  we are concerned with the problem of solving a consistent linear system.  In particular, consider the problem \begin{equation}\label{eq:linear_system}\text{solve} \quad \mA x = b,\end{equation}
where  $0\neq \mA\in \R^{m\times n}$.  We shall assume throughout the paper that the system is consistent, i.e., $\cL\eqdef \{x\;:\; \mA x = b\}\neq \emptyset$.  Problem \eqref{eq:linear_system} is arguably one of the  most important problems in linear algebra. As such, a tremendous amount of research  has been done to design efficient iterative algorithms  \cite{Saad:2003}. However, surprisingly little is know about randomized iterative algorithms for solving linear systems. In this work we aim to contribute to closing this gap.

\subsection{Stochastic reformulations of linear systems} \label{sec:intro:reform} We propose a  fundamental and flexible way of  reformulating each consistent linear system  into a {\em stochastic problem}. To the best of our knowledge, this is the first systematic study of such reformulations. Stochasticity is introduced in a controlled way,  into an otherwise deterministic problem, as a  decomposition tool which can be leveraged to design efficient, granular and scalable randomized algorithms.

\paragraph{Parameters defining the reformulation} Stochasticity enters our reformulations through a user-defined distribution $\cD$ describing an ensemble of random matrices $\mS\in \R^{m\times \sdim}$. We make use of one more parameter:  a user-defined $n\times n$ symmetric positive definite matrix $\mB$. Our approach and underlying theory support virtually all thinkable distributions\footnote{We only require that the the expectation $\EE{\bS \sim \cD}{\mH}$ exists, where $\mH\eqdef \mS(\mS^\top \mA \mB^{-1}\mA^\top \mS)^\dagger \mS^\top$.}. The choice of the distribution should ideally depend on the problem itself, as it will affect the conditioning of the reformulation. However, for now we leave such considerations aside. 

\paragraph{One stochastic reformulation in four disguises} Our reformulation of \eqref{eq:linear_system} as a stochastic problem has several seemingly different, yet equivalent interpretations, and hence we describe   them here side by side. 

 {\em a) Stochastic optimization problem.} Consider  the problem \begin{equation}\label{eq:problem:stoch_opt}\text{minimize} \quad f(x) \eqdef \EE{\mS\sim \cD}{f_{\mS}(x)},\end{equation}
where $f_{\mS}(x)=\tfrac{1}{2}(\mA x - b)^\top \mH (\mA x -b)$, $\mH = \mS(\mS^\top \mA \mB^{-1}\mA^\top \mS)^\dagger \mS^\top$, and $\dagger$ denotes the Moore-Penrose pseudoinverse.  When solving the problem, we do not have (or do not wish to exercise, as it may be prohibitively expensive) explicit access to $f$, its gradient or Hessian. Rather, we can repeatedly sample $\mS\sim \cD$ and receive unbiased samples of these quantities at points of interest. That is, we may obtain local information about the {\em stochastic function} $f_{\mS}$, such as the {\em stochastic gradient} $\nabla f_{\mS}(x)$, and use this to drive an iterative process for solving \eqref{eq:problem:stoch_opt}. 

 {\em b) Stochastic linear system.} Consider now a preconditioned version of the linear system \eqref{eq:linear_system} given by
\begin{equation}\label{eq:problem:preconditioning}\text{solve} \quad \mB^{-1}\mA^\top \EE{\mS \sim \cD}{\mH} \mA x = \mB^{-1}\mA^\top \EE{\mS\sim \cD}{\mH} b,\end{equation}
where $\mP = \mB^{-1}\mA^\top \EE{\mS \sim \cD}{\mH}$ is the preconditioner.  The preconditioner is not assumed to be known explicitly. Instead, when solving the problem, we are able to repeatedly sample $\mS\sim \cD$, obtaining an unbiased estimate of the preconditioner (not necessarily explicitly), $\mB^{-1}\mA^\top \mH$, for which we coin the name  {\em stochastic preconditioner}. This gives us access to an unbiased sample of the preconditioned system \eqref{eq:problem:preconditioning}: $\mB^{-1}\mA^\top \mH \mA x = \mB^{-1}\mA^\top \mH b.$ As we shall see---in an analogy with stochastic optimization---the   information contained in such systems can be utilized by  an iterative algorithm to solve  \eqref{eq:problem:preconditioning}.

 {\em c) Stochastic fixed point problem.} Let $\Pi^{\mB}_{\cL_{\mS}}(x)$ denote the projection of $x$ onto $\cL_{\mS}\eqdef \{x\;:\; \mS^\top \mA x = \mS^\top b\}$, in the norm $\|x\|_{\mB} \eqdef \sqrt{x^\top \mB x}$. Consider the {\em stochastic fixed point problem} 
\begin{equation}\label{eq:problem:fixed_point} \text{solve} \quad x = \EE{\mS\sim \cD}{\Pi^{\mB}_{\cL_{\mS}}(x)}.\end{equation}
That is, we seek to find a  {\em fixed   point} of the mapping $x\to \EE{\mS\sim \cD}{\Pi^{\mB}_{\cL_{\mS}}(x)}$.  When solving the problem, we do not have an explicit access to the average projection map. Instead, we are able to repeatedly sample $\mS\sim \cD$, and use the stochastic projection map $x\to \Pi^{\mB}_{\cL_{\mS}}(x)$.

 {\em d) Probabilistic intersection problem.} Note that $\cL\subseteq \cL_{\mS}$ for all  $\mS$. We would wish to design $\cD$ in such a way that a suitably chosen notion of an intersection of the sets $\cL_{\mS}$ is equal to $\cL$. The correct notion is what we call {\em probabilistic intersection}, denoted $\cap_{\mS\sim \cD} \cL_{\mS}$, and defined as the set of points $x$ which belong to $\cL_{\mS}$ with probability one. This leads to the problem:
\begin{equation}\label{eq:problem:intersection}\text{find} \quad  x \in \cap_{\mS\sim \cD} \cL_{\mS} \eqdef \{x\;:\; \Prob(x\in \cL_{\mS}) =1 \}.\end{equation}

As before, we typically do not have an explicit access to the probabilistic intersection when designing an algorithm. Instead, we can repeatedly sample $\mS\sim \cD$, and utilize the knowledge of $\cL_{\mS}$ to drive the iterative process. If $\cD$ is a discrete distribution, probabilistic intersection reduces to standard intersection.

All of the above formulations have a common feature: they all involve an expectation over $\mS\sim \cD$, and we either do not assume this expectation is known explicitly, or even if it is, we prefer, due to efficiency or other considerations, to sample from unbiased estimates of the objects (e.g., stochastic gradient $\nabla f_{\mS}$, stochastic preconditioner $\mB^{-1}\mA^\top \mH$, stochastic projection map $x\to \Pi^{\mB}_{\cL_{\mS}}(x)$, random set $\cL_{\mS}$) appearing in the formulation. 

\paragraph{Equivalence and exactness} We show that all these stochastic  reformulations are equivalent (see Theorem~\ref{thm:X}). In particular, the following sets are identical: the set of minimizers of the stochastic optimization problem \eqref{eq:problem:stoch_opt}, the solution set of the stochastic linear system \eqref{eq:problem:preconditioning},  the set of fixed points of the stochastic fixed point problem \eqref{eq:problem:fixed_point},  and the probabilistic intersection \eqref{eq:problem:intersection}.  Further, we give necessary and sufficient  conditions for this set to be equal to  $\cL$. If this is the case, we say the the reformulation is {\em exact} (see Section~\ref{sec:exactness}).  Distributions $\cD$ satisfying these conditions always exist, independently of any assumptions on the system beyond consistency. The simplest, but also the least useful choice of a distribution is to pick $\mS = \mI$ (the $m\times m$ identity matrix), with probability one.  In this case, all of our reformulations become trivial.

\subsection{Stochastic algorithms} \label{sec:intro_algs}

Besides proposing a family of stochastic reformulations of the linear system \eqref{eq:linear_system}, we also propose three stochastic algorithms for solving them: Algorithm~\ref{alg:alg1} (basic method), Algorithm~\ref{alg:alg2} (parallel/minibatch method), and Algorithm~\ref{alg:alg3} (accelerated method). Each method can be interpreted naturally from the viewpoint of each of the reformulations. 

\paragraph{Basic method} Below we list some of the interpretations of Algorithm~\ref{alg:alg1} (basic method), which performs updates of the form
\begin{equation} \label{eq:alg1-informal} x_{k+1} = \phi_{\omega}(x_k,\mS_k) \eqdef x_k - \omega \mB^{-1} \mA^\top  \mS_k (\mS_k^\top  \mA \mB^{-1} \mA^\top  \mS_k)^\dagger \mS_k^\top  (\mA x_k - b), \end{equation}
where $\mS_k\sim \cD$ is sampled independently in each iteration.  The method is formally presented and analyzed in Section~\ref{sec:Basic_Method}.

 {\bf a) Stochastic gradient descent.} Algorithm~\ref{alg:alg1} can be seen as {\em stochastic gradient descent} \cite{RobbinsMonro:1951}, with fixed stepsize, applied to \eqref{eq:problem:stoch_opt}. At  iteration $k$ of the method, we sample $\mS_k\sim \cD$, and compute $\nabla f_{\mS_k}(x_k)$, which is an unbiased stochastic approximation of $\nabla f(x_k)$. We then perform the step \begin{equation}\label{eq:alg:SGD}x_{k+1}=x_k - \omega \nabla f_{\mS_k}(x_k),\end{equation} where $\omega>0$ is a stepsize. 

Let us note that in order to achieve linear convergence it is not necessary to use any explicit variance reduction strategy \cite{SAG, SVRG, S2GD, SAGA}, nor do we need to use decreasing stepsizes. This is because the stochastic gradients vanish at the optimum, which is a consequence of the consistency assumption. Surprisingly, we get linear convergence in spite of the fact that we deal with a  non-finite-sum problem \eqref{eq:problem:stoch_opt}, and without the need to assume boundedness of the stochastic gradients, and without $f$ being strongly convex. To the best of our knowledge, this is the first linearly convergent accelerated method for stochastic optimization without requiring strong convexity. This beats the minimax bounds given by Srebro~\cite{SrebroSGDconvex2010}. This is because \eqref{eq:problem:stoch_opt} is not a black-box stochastic optimization objective; indeed, we have constructed it in a particular way from the underlying linear system \eqref{eq:linear_system}. 


 {\bf  b) Stochastic Newton method.} However, Algorithm~\ref{alg:alg1} can also be seen as a {\em stochastic Newton method}. At iteration $k$ we sample $\mS_k\sim \cD$, and instead of applying the inverted  Hessian of $f_{\mS_k}$ to the stochastic gradient (this is not possible as the Hessian is not necessarily invertible), we apply a pseudoinverse (which always exists).  That is, we perform the step \begin{equation}\label{eq:alg:SNM}x_{k+1} = x_k - \omega (\nabla^2 f_{\mS_k}(x_k))^{\dagger_{\mB}} \nabla f_{\mS_k}(x_k),\end{equation}
where $\omega>0$ is a stepsize and the $\mB$-pseudoinverse of a matrix $\mM$ is defined as
$\mM^{\dagger_\mB} \eqdef \mB^{-1} \mM^\top (\mM \mB^{-1}\mM^\top)^\dagger$. While {\em any} pseudoinverse will resolve non-invertibility issue, since we work in the geometry induced by the $\mB$--inner product,  the $\mB$-pseudoinverse is the right choice.  One may wonder why methods \eqref{eq:alg:SGD} and \eqref{eq:alg:SNM} are equivalent; after all,  the (stochastic) gradient descent and (stochastic) Newton methods are not equivalent in general. However, in our setting it turns out that the stochastic gradient $\nabla f_{\mS_k}(x)$ is always an eigenvector of $(\nabla^2 f_{\mS_k}(x))^{\dagger_{\mB}}$,  with eigenvalue 1 (see Lemma~\ref{lem:all_sort_of_stuff_is_equal}).  

Stochastic Newton-type methods were recently developed and analyzed in the optimization and machine learning literature \cite{HessianSketch, NewtonSketch, SDNA, PSNM}. However, they are designed to solve different problems, and operate in a different manner.

 {\bf c) Stochastic proximal point method.} If we restrict our attention to stepsizes satisfying $0<\omega \leq 1$, then Algorithm~\ref{alg:alg1} can be equivalently (see Theorem~\ref{thm:SPP} in the Appendix) written down as 
\begin{equation} 
\label{eq:alg:SPP} x_{k+1} = \arg \min_{x\in \R^n} \left\{f_{\mS_k}(x) + \frac{1-\omega}{2\omega}\|x-x_k\|_{\mB}^2\right\}.
\end{equation}
That is, \eqref{eq:alg:SPP} is a {\em stochastic} variant of the {\em proximal point method}  for solving \eqref{eq:problem:stoch_opt}, with a fixed regularization parameter \cite{PPM1976}. The proximal point method is obtained from \eqref{eq:alg:SPP} by replacing  $f_{\mS_k}$ with $f$.  If we define the {\em prox operator} of a function $\psi:\R^n\to \R$ with respect to the $\mB$-norm as
$\prox_{\psi}^{\mB}(y) \eqdef \arg \min_{x\in \R^n} \left\{\psi(x) + \frac{1}{2}\|x-y\|_{\mB}^2\right\},$
 then iteration \eqref{eq:alg:SPP} can be written compactly as
$x_{k+1}  = \prox^{\mB}_{\tfrac{\omega}{1-\omega} f_{\mS_k}}(x_k).$

{\bf d) Stochastic fixed point method.} From the perspective of the stochastic fixed point problem \eqref{eq:problem:fixed_point},  Algorithm~\ref{alg:alg1}  can be interpreted as a {\em stochastic fixed point method}, with relaxation. We first reformulate the problem into an equivalent form using relaxation, which is done to improve the contraction properties of the map. We pick a parameter $\omega>0$, and instead consider the equivalent fixed point problem
  $x = \EE{\mS\sim \cD}{\omega \Pi^{\mB}_{\cL_{\mS}}(x) + (1-\omega)x}$. Now, at iteration $k$, we sample $\mS_k\sim \cD$, which enables us to obtain an unbiased estimate of the new fixed point mapping, and then simply perform one step of a fixed point method on this mapping:
\begin{equation}\label{eq:alg:SFP} x_{k+1} = \omega \Pi^{\mB}_{\cL_{\mS_k}}(x_k) + (1-\omega)x_k.\end{equation}

{\bf e) Stochastic projection method.} Algorithm~\ref{alg:alg1} can also be seen as a {\em stochastic projection method} applied to the probabilistic intersection problem \eqref{eq:problem:intersection}. By sampling $\mS_k\sim \cD$, we are one of the sets defining the intersection, namely  $\cL_{\mS_k}$. We then project the last iterate onto this set, in the $\mB$-norm, followed by a relaxation step with relaxation parameter $\omega>0$. That is, we perform the update \begin{equation}\label{eq:alg:SPM} x_{k+1} = x_k + \omega (\Pi^{\mB}_{\cL_{\mS_k}}(x_k)-x_k).\end{equation}
This is a randomized variant of an alternating projection method. Note that the representation of $\cL$ as a probabilistic intersection of sets is not given to us. Rather,  we  construct it with the hope to obtain faster convergence. 

An optimization algorithm utilizing stochastic projection steps was developed in \cite{randomProj-Nedic2011}. For a comprehensive  survey of projection methods for convex feasibility problems, see \cite{BauBor:96}.

\paragraph{Parallel method} A natural parallelization strategy is to perform one step of the basic method independently $\mb$ times,  starting from the same point $x_k$, and average the results:
\begin{equation} \label{eq:parallel_method_intro}
x_{k+1} = \frac{1}{\mb}\sum_{i=1}^\mb \phi_{\omega}(x_k,\mS_k^i),
\end{equation}
where $\mS_k^1,\dots,\mS_k^\mb$ are independent samples from $\cD$ (recall that $\phi_\omega$ is defined in \eqref{eq:alg1-informal}). This method is formalized as Algorithm~\ref{alg:alg2}, and studied in Section~\ref{sec:minibatch}.  Betrayed by our choice of the name, this method is useful in scenarios where $\mb$ parallel workers are available, allowing for the $\mb$ basic steps to be computed in parallel, followed by an averaging operation. 

From the stochastic optimization viewpoint, this is a {\em minibatch} method. Considering the SGD interpretation \eqref{eq:alg:SGD},  we can equivalently write \eqref{eq:parallel_method_intro} in the form
$x_{k+1} = x_k - \frac{1}{\mb}\sum_{i=1}^\mb \nabla f_{\mS_k^i}(x_k).$ This is {\em minibatch SGD}. Iteration complexity of minibatch SGD was first understood in the context of training support vector machines with the hinge loss \cite{pegasos2}.  Complexity under a lock-free paradigm, in a different setting from ours, was first studied in \cite{hogwild}. Notice that in the limit $\mb\to \infty$, we obtain gradient descent. It is therefore interesting to study the complexity of the parallel method as a function $\mb$.  Of course, this method can also be interpreted as a minibatch stochastic Newton method, minibatch proximal point method and so on. 

From the probabilistic intersection point of view, method \eqref{eq:parallel_method_intro} can be interpreted as a stochastic variant of the parallel projection method. In particular, we obtain the iterative process
\[x_{k+1} = x_k + \omega \left[\left(\frac{1}{\mb}\sum_{i=1}^\mb \Pi^{\mB}_{\cL_{\mS_k^i}}(x_k) \right)-x_k\right].\]
That is, we move from the current iterate, $x_k$, towards the average of the $\mb$ projection points, with undershooting (if $\omega<1$), precisely landing on (if $\omega=1$), or overshooting (if $\omega>1$) the average. Projection methods have a long history and are well studied \cite{Dykstra1983,Boyle_and_Dykstra1986}. However, much less is known about stochastic projection methods.

\paragraph{Accelerated method} In order to obtain  acceleration without parallelization---that is, acceleration in the sense of Nesterov~\cite{NesterovBook}---we suggest to perform an update step in which $x_{k+1}$ depends on both $x_{k}$ and $x_{k-1}$. In particular, we take two {\em dependent} steps of Algorithm~\ref{alg:alg1}, one from $x_k$ and one from $x_{k-1}$, and then take an affine combination of the results. That is, the process is started with $x_0,x_1\in \R^n$, and for $k\geq 1$ involves an iteration of the form
\begin{equation} \label{eq:alg2-intro} x_{k+1} = \gamma \phi_{\omega}(x_k,\mS_k) + (1-\gamma)\phi_{\omega}(x_{k-1},\mS_{k-1}),\end{equation}
where  the  matrices $\{\mS_k\}$ are  independent samples from $\cD$, and $\gamma\in \R$ is an {\em acceleration parameter}. Note that by choosing $\gamma=1$ (no acceleration), we recover Algorithm~\ref{alg:alg1}. This method is formalized as Algorithm~\ref{alg:alg3} and analyzed in Section~\ref{sec:acceleration}. Our theory suggests that $\gamma$ should be always between $1$ and $2$. In particular, for well conditioned problems\footnote{The condition number, $\condnum$, is defined in  \eqref{eq:condition_number}.}, one should choose $\gamma\approx 1$, and for ill conditioned problems, one should choose $\gamma\approx 2$.

\subsection{Complexity}

The iteration complexity of our methods is completely described  by the spectrum of the (symmetric positive semidefinite) matrix
\[\mW \eqdef \mB^{-1/2}\mA^\top \EE{\mS\sim \cD}{\mH} \mA \mB^{-1/2}.\] 
Let $\mW = \mU \Lambda \mU^\top $ be the eigenvalue decomposition of $\mW$, where $\mU=[u_1,\dots,u_n]$ are the eigenvectors,  $\lambda_1\geq \lambda_2 \geq \dots \geq \lambda_n \geq 0$ are the eigenvalues, and $\Lambda = \Diag{\lambda_1,\dots,\lambda_n}$. It can be shown that the largest eigenvalue, $\lambda_{\max} \eqdef \lambda_1 $ is bounded above by 1 (see Lemma~\ref{lem:spectrumxx}).  Let $\lambda_{\min}^+$ be the smallest nonzero eigenvalue. 

With all of the above reformulations we associate the same {\em condition number}
\begin{equation} \label{eq:condition_number}\condnum = \condnum(\mA, \mB, \cD) \eqdef \|\mW\| \|\mW^{\dagger}\| = \frac{\lambda_{\max}}{\lambda_{\min}^+},\end{equation}
where $\|\cdot\|$ is the spectral norm, $\lambda_{\max}$ is the largest eigenvalue of $\mW$ and $\lambda_{\min}^+$ is the smallest nonzero eigenvalue of $\mW$. Note that, for example, $\condnum$ is the condition number of the Hessian of $f$, and also the condition number of the stochastic linear system \eqref{eq:problem:preconditioning}. Natural interpretations from the viewpoint of the stochastic fixed point and probabilistic intersection problems are also possible. As one varies the parameters defining the reformulation ($\cD$ and $\mB$), the condition number changes.  For instance, choosing $\mS=\mI$ with probability one gives $\condnum=1$.

\begin{table}
\begin{center}
{
\scriptsize
\begin{tabular}{|c|c|c|c|c|c|c|c|}
\hline
Alg. & $\omega$ & $\mb$ & $\gamma$& Quantity & Rate & Complexity & Theorem\\
\hline

\ref{alg:alg1} & $1$ & - & - &  $\|\Exp{x_k-x_*}\|_{\mB}^2$ & $(1-\lambda_{\min}^+)^{2k}$        & $1/\lambda_{\min}^+$ & \ref{thm:alg1_complexity_first}, \ref{thm:corollary}, \ref{thm:alg1_bestomega} \\

\ref{alg:alg1} & $1/\lambda_{\max}$ & - & - &  $\|\Exp{x_k-x_*}\|_{\mB}^2$ & $(1-1/\condnum)^{2k}$        & $\condnum$ & \ref{thm:alg1_complexity_first}, \ref{thm:corollary}, \ref{thm:alg1_bestomega} \\

\ref{alg:alg1} & $\tfrac{2}{\lambda_{\min}^+ + \lambda_{\max}}$ & - & - &  $\|\Exp{x_k-x_*}\|_{\mB}^2$ & $(1-2/(\condnum+1))^{2k}$        & $\condnum$ & \ref{thm:alg1_complexity_first}, \ref{thm:corollary}, \ref{thm:alg1_bestomega} \\

\ref{alg:alg1} & 1 & -& -& $\Exp{\|x_k-x_*\|_{\mB}^2}$ & $(1-\lambda_{\min}^+)^k$        &  $1/\lambda_{\min}^+$ & \ref{thm:alg1_complexity_second}\\

\ref{alg:alg1} & 1 & - & - &  $\Exp{f(x_k)}$ & $(1-\lambda_{\min}^+)^k$        & $1/\lambda_{\min}^+$ & \ref{thm:complexity_f_2}\\

\hline

\ref{alg:alg2} & $1$ & $\mb$ & - & $\Exp{\|x_k-x_*\|_{\mB}^2}$ & $\left( 1-\lambda_{\min}^+ \left(2- \xi(\mb)\right) \right)^k $ &  &  \ref{thm:minibatch} \\

\ref{alg:alg2} & $1/\xi(\mb)$ & $\mb$ & - & $\Exp{\|x_k-x_*\|_{\mB}^2}$ & $\left( 1-\tfrac{\lambda_{\min}^+}{\xi(\mb) }\right)^k $ & $\xi(\mb)/\lambda_{\min}^+ $ &  \ref{thm:minibatch} \\

\ref{alg:alg2} & $1/\lambda_{\max}$ & $\infty$ & - & $\Exp{\|x_k-x_*\|_{\mB}^2}$ & $(1-1/\condnum)^k$ & $\condnum$ &  \ref{thm:minibatch} \\

\hline

\ref{alg:alg3} & $1$& - & $\tfrac{2}{1+\sqrt{0.99\lambda_{\min}^+}}$ &  $\|\Exp{x_k-x_*}\|_{\mB}^2$ & $\left(1-\sqrt{0.99\lambda_{\min}^+}\right)^{2k}$ & $\sqrt{1/\lambda_{\min}^+}$ & \ref{thm:main-accelerated}\\

\ref{alg:alg3} & $1/\lambda_{\max}$& - & $\tfrac{2}{1+\sqrt{0.99/\condnum}}$ &  $\|\Exp{x_k-x_*}\|_{\mB}^2$ & $\left(1-\sqrt{0.99/\condnum}\right)^{2k}$ & $\sqrt{\condnum}$ & \ref{thm:main-accelerated}\\

\hline
\end{tabular}
}
\end{center}
\caption{Summary of the main complexity results. In all cases, $x_*=\Pi^{\mB}_{\cL}(x_0)$ (the projection of the starting point onto the solution space of the linear system). ``Complexity'' refers to the number of iterations needed to drive ``Quantity'' below some error tolerance $\epsilon>0$ (we suppress a $\log(1/\epsilon)$ factor in all expressions in the ``Complexity'' column).  In the table we use the following expressions: $\xi(\mb) = \tfrac{1}{\mb} + (1-\tfrac{1}{\mb}) \lambda_{\max}$ and $\condnum = \lambda_{\max}/\lambda_{\min}^+$.}
\label{tbl:summary_intro}
\end{table}

\paragraph{Exact formula for the evolution of expected iterates}
We first show (Theorem~\ref{thm:alg1_complexity_first}) that after the canonical linear transformation $x\mapsto \mU^\top \mB^{1/2}x$, the expected iterates of the basic method satisfy the identity
\begin{equation}\label{eq:i98t8ghdiee}\Exp{\mU^\top  \mB^{1/2}(x_k-x_*)} = (\mI -  \omega  \Lambda )^k \mU^\top  \mB^{1/2}(x_0-x_*),\end{equation}
where $x_*$ is an arbitrary solution of the linear system (i.e., $x_*\in \cL$). This identity seems to suggest that zero eigenvalues cause an issue, preventing convergence of the corresponding elements of the error to zero. Indeed, if $\lambda_i=0$, then 
\eqref{eq:i98t8ghdiee} implies that $u_i^\top \mB^{1/2}(x_k-x_*) = u_i^\top \mB^{1/2}(x_0-x_*)$, which does not change with $k$. However, it turns out that under the assumption of exactness we have $ u_i^\top \mB^{1/2}(x_0-x_*)=0$ whenever $\lambda_i=0$ if we let $x_*$ to be the projection, in the $\mB$-norm, of $x_0$ onto $\cL$ (Theorem~\ref{thm:alg1_complexity_first}). This is then used to argue (Corollary~\ref{thm:corollary})  that $\|\Exp{x_k-x_*}\|_\mB$ converges to zero if and only if $0<\omega<2/\lambda_{\max}$. The choice of stepsize issue is discussed in detail in Section~\ref{subsec: omega}.

The main complexity results obtained in this paper are summarized in Table~\ref{tbl:summary_intro}. The full statements including the dependence of the rate on these parameters, as well as other alternative results (such as lower bounds, ergodic convergence)  can be found in the theorems referenced in the table.

\paragraph{L2 (mean square) convergence}The rate of decay of the quantity $\|\Exp{x_k-x_*}\|_\mB^2$ for three different stepsize choices is summarized in the first three rows  of Table~\ref{tbl:summary_intro}. In particular, the default stepsize $\omega=1$  leads to the complexity $1/\lambda_{\min}^+$, the long stepsize $\omega = 1/\lambda_{\max}$  gives the improved complexity  $\lambda_{\max}/\lambda_{\min}^+$, and 
the optimal stepsize $\omega = 2/(\lambda_{\max}+\lambda_{\min}^+)$  gives the  best complexity $0.5+0.5\lambda_{\max}/\lambda_{\min}^+$. However, if we are interested in the convergence of the larger quantity $\Exp{\|x_k-x_*\|_\mB^2}$ (L2 convergence), it turns out that $\omega=1$ is the optimal choice, leading to the complexity $1/\lambda_{\min}^+$.

\paragraph{Parallel and accelerated methods} The parallel method improves upon the basic method in that it is capable of faster L2 convergence. We give a complexity formula as a function of  $\tau$, recovering the complexity the $1/\lambda_{\min}^+$ rate of the basic method in the $\tau=1$ case, and achieving the improved asymptotic complexity $\lambda_{\max}/\lambda_{\min}^+$ as $\tau\to \infty$ (recall that $\lambda_{\max}\leq 1$, whence the improvement). Because of this,  $\lambda_{\max}$ is the quantity driving {\em parallelizability}. If $\lambda_{\max}$ is close to 1, then there is little or no reason to parallelize. If $\lambda_{\max}$ is very small, parallelization helps. The smaller $\lambda_{\max}$ is, the more gain is achieved by utilizing more processors.

With an appropriate choice of the stepsize ($\omega$)  and acceleration ($\gamma$) parameters, the accelerated method improves the complexity $\lambda_{\max}/\lambda_{\min}^+$ achieved by the basic method to $\sqrt{\lambda_{\max}/\lambda_{\min}^+}$. However, this is established for the quantity $\|\Exp{x_k-x_*}\|_\mB^2$. We conjecture that the L2 convergence rate of the accelerated method (for a suitable choice of the parameters $\omega$ and $\gamma$) is $\sqrt{1/\lambda_{\min}^+}$. 

\paragraph{Novelty} All convergence results presented in this paper are new in one way or another. Indeed, we extend the methods from \cite{SIMAX2015, SDA} to include a stepsize/relaxation parameter, or analyze these methods  under a weaker condition (basic method with $\omega=1$ under the ``exactness'' assumption), or develop new methods (parallel and accelerated variants).  Let us focus on the case of the basic method with unit stepsize as this method was considered in \cite{SIMAX2015}  and \cite{SDA}: see lines 1 and 4 of Table~\ref{tbl:summary_intro}. Even in this case, our results hold under weaker assumptions; and the unit stepsize is not optimal for the convergence rate of the quantity in line 1 (this can be seen by looking at the improved rates in lines 2 and 3). While \cite{SDA} weakens the (rather strong) assumption in \cite{SIMAX2015} (full column rank of $\bf A$), our assumption (which we call ``exactness'') is even weaker. The focus of paper \cite{SDA} was both to weaken the assumptions in \cite{SIMAX2015}, and also to develop a duality theory for what we now call the ``basic method''. Here we do not touch on duality theory at all (the extension is rather straightforward).


\subsection{Stochastic preconditioning} We coin the phrase {\em stochastic preconditioning} to refer to the general problem of {\em designing} matrix $\mB$ and distribution $\cD$ such that
the appropriate condition number of $\mW$ is well behaved. For instance, one might be interested in minimizing (or reducing) the condition number $1/\lambda_{\min}^+$ if the basic method with unit stepsize is used, and the quantity we wish to converge to zero is either $\Exp{\|x_k-x_*\|_{\mB}^2}$, $\|\Exp{x_k-x_*}\|_{\mB}^2$, or $\Exp{f(x_k)}$ (see Lines 1, 4 and 5 of Table~\ref{tbl:summary_intro}). On the other hand, if we can estimate $\lambda_{\max}$, then one may use the basic method with the larger stepsize $1/\lambda_{\max}$, in which case we may wish to minimize (or reduce) the condition number $\lambda_{\max}/\lambda_{\min}^+$ (see Line 2 of Table~\ref{tbl:summary_intro}).

One possible approach to stochastic preconditioning is to choose some $\mB$ and then focus on a reasonably simple parametric family of distributions $\cD$, trying to find the parameters which minimize (or reduce) the condition number of interest. The distributions in this family should be ``comparable'' in the sense that the cost of one iteration of the method of interest should be comparable for all distributions; as otherwise comparing bounds on the number of iterations does not make sense.

To illustrate this through a simple example,  consider the family of discrete uniform distributions over $m$ vectors in $\R^m$ (that is, $\mS_1, \dots, \mS_m \in \R^{m\times 1}$), where the vectors themselves are the parameters defining the family. The cost of one iteration of the basic method will be proportional to the cost of performing a matrix-vector product of the form $\mS^\top \mA$, which is comparable across all distributions in the family (assuming the vectors are dense, say). To illustrate this approach, consider this family, and further assume that $\mA$ is $n\times n$ symmetric and positive definite. Choose $\mB=\mA$. It can be shown that $1/\lambda_{\min}^+$  is maximized precisely when $\{\mS_i\}$ correspond to the eigenvectors of $\mA$.  In this case, $1/\lambda_{\min}^+ = n$, and hence our stochastic preconditioning strategy results in a condition number which is {\em independent} of the condition number of $\mA$. If we now apply the basic method to the stochastic optimization reformulation, we can interpret it as a {\em spectral} variant of stochastic gradient descent (spectral SGD). Ignoring logarithmic terms, spectral SGD only needs to perform $n$ matrix vector multiplications to solve the problem. While this is not a practical preconditioning strategy---computing the eigenvectors is hard, and if we actually had access to them, we could construct the solution directly, without the need to resort to an iterative scheme---it sheds light on the opportunities and challenges associated with stochastic preconditioning.

All  standard sketching matrices $\bS$ can be employed within our framework, including the count sketch~\cite{CountSketch2002} and the count-min sketch \cite{CountMinSketch2005}. In the context to this paper (since we sketch with the transpose of $\mS$), $\mS$ is a count-sketch matrix (resp. count-min sketch) if it is assembled from random columns of $[\mI,-\mI]$ (resp $\mI$), chosen uniformly with replacement, where $\mI$ is the $m\times m$ identity matrix.

 The notion of {\em importance sampling} developed in the last 5 years in the randomized optimization and machine learning literature \cite{NSync, IProx-SDCA, QUARTZ, ALPHA} can be seen a type of stochastic preconditioning, somewhat reverse to what we have outlined above. In these methods, the atoms forming the distribution $\cD$ are fixed, and one is seeking to associate them with appropriate probabilities. Thus, the probability simplex is the parameter space defining the class of distributions one is considering.
 
Stochastic preconditioning is  fundamentally different from the idea of {\em randomized preconditioning} \cite{RandPrecond, Blendenpick}, which is based on a two-stage procedure. In the first step, the input matrix is randomly projected and an good preconditioning matrix is extracted. In the second step, an iterative least squares solver  is applied to solve the preconditioned system. 
 
Much like standard preconditioning, different stochastic preconditioning strategies will need to be developed for different classes of problems, with structure of $\mA$ informing the choice of $\mB$ and $\cD$. Due to its inherent difficulty, stochastic preconditioning is beyond  the scope of this paper.


\subsection{Notation}

For convenience, a table of the most frequently used notation is included in Appendix~\ref{sec:notation_glossary}.  All matrices are written in bold capital letters. By $\range{\mM}$ and $\kernel{\mM}$ we mean the range space and null space of matrix $\mM$, respectively.  Given a symmetric positive definite matrix $\mB \in \R^{n\times n} $, we equip $\R^n$ with the Euclidean inner
product defined by $\dotprod{x, h}_{\mB} \eqdef x^\top \mB h$. We also define the induced norm: $\norm{x}_{\mB} \eqdef \sqrt{\dotprod{x, x}_{\mB}}$.  The short-hand notation $\|\cdot\|$ means $\|\cdot\|_{\mI}$, where $\mI$ is the identity matrix. We shall often write $\|x\|_{\mM}$ for matrix $\mM\in \R^{n\times n}$ being merely positive semidefinite; this constitutes a pseudonorm.

\section{Further Connections to Existing Work}

In this section we outline several connections of our work with existing  developments. We do not aim to be comprehensive.

\subsection{Randomized Kaczmarz method, with relaxation and acceleration}

Let $\mB = \mI$, and choose $\cD$ as follows: $\mS = e_i$ with probability $p_i = \norm{\mA_{i:}}_2^2 / \norm{\mA}_F^2$.  Since
\[\mW = \mB^{-1/2} \Exp{\mZ} \mB^{-1/2} = \Exp{\mZ} =
\sum_{i=1}^m p_i
\frac{\mA_{i:}^\top \mA_{i:}}{\norm{\mA_{i:}}_2^2}
=
\frac{1}{\norm{\mA}_F^2} \sum_{i=1}^m  \mA_{i:}^\top \mA_{i:} 
= \frac{\mA^\top \mA}{\norm{\mA}_F^2}.
\] 
the condition number is
\begin{equation}
\label{eq:iuiughsss}\condnum = \|\mW\| \|\mW^{\dagger} \| = \|\Exp{\mZ}\| \|\Exp{\mZ}^{\dagger}\| = \frac{\lambda_{\max}(\mA^\top \mA)}{\lambda_{\min}^+(\mA^\top \mA)}.\end{equation}

\paragraph{Basic method} In this setup,  Algorithm~\ref{alg:alg1} simplifies to
\[x_{k+1} = x_k - \frac{\omega  (\mA_{i:}x_k - b_i)}{\|\mA_{i:}\|_2^2}\mA_{i:}^\top.\]

For $\omega=1$, this reduces to the celebrated randomized Kaczmarz method (RK) of Strohmer and Vershyinin \cite{Strohmer2009}. For $\omega>1$, this is {\em RK with overrelaxation} -- a new method  not considered before. Based on Theorem~\ref{thm:alg1_bestomega}, for $\omega \in [1/\lambda_{\max}, \omega_*]$ the iteration complexity of Algorithm~\ref{alg:alg1} is  ${\tilde \cO}(\condnum)  \overset{\eqref{eq:iuiughsss}}{=} {\tilde \cO}\left(\frac{\lambda_{\max}(\mA^\top \mA)}{\lambda_{\min}^+(\mA^\top \mA)}\right).$ This is an improvement on standard RK method (with $\omega=1$), whose complexity  depends on $\trace{\mA^\top \mA}$ instead of $\lambda_{\max}$.  Thus, the improvement  can be as large as by a factor $n$.

\paragraph{Accelerated method}  In the same setup, Algorithm~\ref{alg:alg3} simplifies to
\[x_{k+1} = \gamma\left(x_k - \tfrac{\omega (\mA_{i_k :}x_k - b_{i_k})}{\|\mA_{i_k :}\|_2^2}\mA_{i_k :}^\top \right) + (1-\gamma)\left(x_{k-1} - \tfrac{ \omega (\mA_{i_{k-1} :}x_{k-1} - b_{i_{k-1}})}{\|\mA_{i_{k-1} :}\|_2^2}\mA_{i_{k-1} :}^\top\right)\]
This is accelerated RK method with overrelaxation -- a new method not considered before.  Based on Theorem~\ref{thm:main-accelerated}, for the parameter choice $\omega = 1/\lambda_{\max}$ and $\gamma = 2/(1+\condnum^{-2})$, the iteration complexity of this method is ${\tilde \cO}(\sqrt{\condnum}) \overset{\eqref{eq:iuiughsss}}{=} {\tilde \cO} \left( \sqrt{\frac{\lambda_{\max}(\mA^\top \mA)}{\lambda_{\min}^+(\mA^\top \mA)}} \right).$
If we instead choose $\omega=1$ and $\gamma = 2/(1+\condnum^{-2})$, the iteration complexity gets slightly worse: $1/\sqrt{\lambda_{\min}^+(\mW)} = \|\mA\|_F/ \sqrt{\lambda_{\min}^+(\mA^\top \mA)}$. To the best of our knowledge, this is the best known complexity for a variant of RK.
Let us remark that an asynchronous accelerated RK method was developed in \cite{liu2015accelerated}.

The randomized Kaczmarz method, its variants have received considerable attention recently  \cite{Needell2010,  Zouzias2012,  Needell2012a, Ramdas2014}, and several connections to existing methods were made. Kaczmarz-type methods in a Hilbert setting were developed in \cite{Oswald2015}.

\subsection{Basic method with unit stepsize}

The method $x_{k+1}\leftarrow \Pi_{\cL_{\mS_k}}^\mB(x_k)$  was first proposed and analyzed (under a full rank assumption on $\mA$)  in \cite{SIMAX2015}. Note that in view of \eqref{eq:alg:SFP}, this is the basic method with unit stepsize. However, it was not interpreted as a method for solving any of the reformulations presented here, and as a result, all the interpretations we are giving here also remained undiscovered. Instead, it was developed and presented as a method for finding the unique solution of~\eqref{eq:linear_system}.

\subsection{Duality}

As we have seen, all three methods developed in this paper converge to a specific solution of the linear system \eqref{eq:linear_system}, namely, to the projection of the starting point onto the solution space: $x_* = \Pi^{\mB}_{\cL}(x_0)$. Therefore, our methods solve the best approximation  problem
\begin{equation}\label{eq:s98y0f9yhfsee}\min_{x\in \R^n} \{\|x-x_0\|_{\mB}^2 \;:\; \mA x = b\}. \end{equation}
The ``dual'' of the basic method with {\em unit stepsize} was studied  in this context in \cite{SDA}. The Fenchel dual of the best approximation problem is an unconstrained concave quadratic maximization problem of the form $\max_{y\in \R^m} D(y),$
where the dual objective $D$ depends on $\mA, b, \mB$ and $x_0$. In \cite{SDA} it was shown that the basic method with unit stepsize  closely related to a {\em dual method} (stochastic dual subspace ascent) performing iterations of the form \begin{equation}\label{eq:SDA}y_{k+1} = y_k + \mS_k \lambda_k,\end{equation} where $\mS_k\sim \cD$, and $\lambda_k$ is chosen in such a way that the dual objective is as large as possible. Notice that the dual method in each iteration performs exact ``line search'' in a random subspace of $\R^m$ spanned by the columns of the random matrix $\mS_k$, and passing through $y_k$.  In particular, the iterates of the basic method with unit stepsize arise as affine images of the iterates of the dual method: $x_k = x_0 + \mB^{-1}\mA^\top y_k$. 

In a similar fashion, it is possible to interpret the methods developed in this paper as images of appropriately designed dual methods. In \cite{SDA}, the authors focus on establishing convergence of various quantities, such as dual objective, primal-dual gap, primal objective and so on. They obtain the complexity $1/\lambda_{\min}^+$, which is identical to the rate  we obtain here for the basic method with unit stepsize. However, their results require a stronger assumption on $\cD$ (their assumption implies exactness, but not vice versa). We perform a much deeper analysis from the novel viewpoint of stochastic reformulations of linear systems, include a stepsize, and propose and analyze parallel and accelerated variants.

In the special case when $\mS$ is chosen to be a random unit coordinate vector, \eqref{eq:SDA} specializes to  the {\em randomized coordinate descent method}, first analyzed by Leventhal and Lewis \cite{Leventhal:2008:RMLC}. 
In the special case when $\mS$ is chosen as a random column submatrix of the $m\times m$ identity matrix, \eqref{eq:SDA} specializes to the {\em randomized Newton method} of Qu et al.\ \cite{SDNA}. Randomized coordinate descent methods are the state of the art methods for certain classes of convex optimization problems with a very large number of variables. An analysis of an {\em asynchronous} randomized coordinate descent method for solving linear systems was performed in~\cite{ALS2015}. The first complexity analysis beyond quadratics was performed in \cite{ShalevTewari09, Nesterov:2010RCDM, UCDC}, a parallel method was developed in \cite{PCDM}, duality was explored in \cite{SDCA} and acceleration in \cite{APPROX}. 


%

\subsection{Randomized gossip algorithms} It was shown in \cite{SIMAX2015, Loizou-gossip-GlobalSIP} that for a suitable matrix $\mA$ encoding the structure of a graph, and for $b=0$, the application of the randomized Kaczmarz and randomized block Kaczmarz methods to \eqref{eq:s98y0f9yhfsee} lead to classical and new {\em randomized gossip algorithms} developed and studied in the signal processing literature, with new insights and novel proofs. Likewise, when applied in the same context, our new methods  lead to new parallel and accelerated gossip algorithms.

\subsection{Empirical risk minimization}

Regularized empirical risk minimization (ERM) problems are optimization problems of the form
\begin{equation}\label{eq:ERMxx}\min_{x\in \R^n} \frac{1}{m}\sum_{i=1}^m f_i(x) + g(x),\end{equation}
where $f_i$ is a loss function and $g$ a regularizer. Problems of this form are of key importance in machine learning and statistics \cite{Shai-book}. Let $f_i(x)=0$ if $\mA_{i:}x = b_i$ and $f_i(x)=+\infty$ otherwise, further  let $g(x) = \|x-x_0\|_{\mB}^2$. In this setting,  the ERM problem \eqref{eq:ERMxx} becomes equivalent to \eqref{eq:s98y0f9yhfsee}. While quadratic regularizers similar to $g$ are common in machine learning, zero/infinity loss functions are not used. For this reason, this specific instance of ERM was not studied in the machine learning literature. Since all our methods solve \eqref{eq:s98y0f9yhfsee}, they can be seen as stochastic algorithms for solving the ERM problem \eqref{eq:ERMxx}.

Since there is no reason to expect that any of our methods will satisfy $\mA x_k=b$ for any finite $k$,  the ERM objective value can remain to be equal to $+\infty$ throughout the entire iterative process. From this perspective, the value of the ERM objective is unsuitable as a measure of progress.

\subsection{Matrix inversion and quasi-Newton methods}

Given an invertible matrix $\mA$, its inverse  is the unique solution of the matrix equation $\mA \mX = \mI$. In \cite{inverse} the authors have extended the ``sketch and project'' method  \cite{SIMAX2015} to this equation. In each iteration of the method, one projects the current iterate matrix $\mX_k$, with respect to a weighted Frobenius norm, onto the sketched equation $\mS_k^\top \mA \mX = \mS_k^\top \mI$. This is a similar iterative process to the basic method with unit stepsize. The authors of \cite{inverse} prove that the iterates of method converge to  the inverse matrix at a linear rate, and detail connections of their method to quasi-Newton updates and approximate inverse preconditioning. A limited memory variant of the stochastic block BFGS method has been used to develop new efficient stochastic quasi-Newton methods for empirical risk minimization problems appearing in machine learning  \cite{SBFGS}.

It is possible to approach the problem $\mA \mX = \mI$ in the same way we approach the system \eqref{eq:linear_system} in our paper, writing down stochastic reformulations, and then developing new variants of the sketch and project method \cite{inverse}: a basic method with a stepsize, and parallel and accelerated methods. This would lead to the development of new variants of stochastic quasi-Newton rules, notably parallel and accelerated block BFGS. We conjecture that these rules will have superior performance to classical BFGS in practice.

Similar extensions and improvements can be done in relation to the problem of computing the pseudoinverse of very large rectangular matrices \cite{pseudoinverse}.

\section{Stochastic Reformulations of Linear Systems} \label{sec:reformulations}

In this section we formally derive the four stochastic formulations outlined in the introduction: {\em stochastic optimization}, {\em stochastic linear system}, {\em stochastic fixed point problem} and {\em probabilistic intersection}. Along the way we collect a number of results and observations which will be useful in the complexity analysis of our methods. 

\subsection{Projections} 

For a closed convex set $\emptyset\neq \cY\subseteq \R^n$, $\Pi^{\mB}_{\cY}$ denotes the projection operator onto $\cY$, in the $\mB$-norm:
$\Pi^{\mB}_{\cY}(x) \eqdef \arg \min_{y\in \R^n} \left\{\|y-x\|_{\mB} \;:\; y \in \cY\right\}.$ The $\mB$-pseudoinverse of a matrix $\mM$ is defined as
\begin{equation}\label{eq:B-pseudo}\mM^{\dagger_\mB} \eqdef \mB^{-1} \mM^\top (\mM \mB^{-1}\mM^\top)^\dagger.\end{equation}

The projection onto $\cL = \{x\;:\; \mA x = b\}$ is given by
\begin{equation}\label{eq:project_linear}\Pi^{\mB}_{\cL}(x) = x - \mB^{-1} \mA^\top  (\mA \mB^{-1} \mA^\top )^\dagger (\mA x-b) \overset{\eqref{eq:B-pseudo}}{=} x - \mA^{\dagger_\mB}(\mA x - b).\end{equation}

Note that for $\mB=\mI$, we get
$\mA^{\dagger_\mI} = \mA^\top (\mA \mA^\top)^\dagger = \mA^\dagger$, and hence the $\mI$-pseudoinverse reduces to the standard Moore-Penrose pseudoinverse. The $\mB$-pseudoinverse satisfies $\mA^{\dagger_\mB} b  =   \Pi^{\mB}_{\cL}(0) = \arg \min_{x} \{\|x\|_{\mB}\;:\; \mA x = b\}.$

\subsection{Stochastic functions}

Let $\cD$ be an arbitrary distribution over $m\times \sdim$ matrices. We shall write $\mS\sim \cD$ to say that $\mS$ is drawn from $\cD$. We shall often refer to   matrix expressions involving $\mS, \mA$ and $\mB$. In order to keep the expressions brief throughout the paper, it will be useful to define
\begin{equation}\label{eq:H}\mH \eqdef \mS (\mS^\top \mA \mB^{-1} \mA^\top \mS)^\dagger \mS^\top,\end{equation} and
\begin{equation}\label{eq:Z}\mZ \eqdef \mA^\top \mH \mA \overset{\eqref{eq:H}}{=} \mA^\top \mS (\mS^\top \mA \mB^{-1} \mA^\top \mS)^\dagger \mS^\top \mA.\end{equation} 

Notice that 
${\bf B}^{-1} {\bf Z} x = \arg \min_{y \in {\rm Range}({\bf B}^{-1} {\bf A}^\top {\bf S})} \|x- y\|_{\bf B}.$
That is, $\mB^{-1}\mZ$ is the projection matrix corresponding to  projection onto $\range{\mB^{-1} \mA^\top \mS}$ in the $\mB$-norm.
In particular, we have the relations
\begin{equation}\label{eq:ZBZ}
(\mB^{-1}\mZ)^2 = \mB^{-1}\mZ \qquad \text{and} \qquad \mZ \mB^{-1} \mZ = \mZ.
\end{equation}

Given $\mS\sim \cD$, we define the {\em stochastic (random) function}
\begin{equation} \label{eq:prodstoch}
f_{\mS} (x) \eqdef  \frac{1}{2}\norm{\mA x - b}_{\mH}^2 = \frac{1}{2}(\mA x - b)^\top \mH (\mA x - b).
\end{equation}

By combining \eqref{eq:prodstoch} and \eqref{eq:Z}, this can be also written in the form
\begin{equation} \label{eq:prodstoch2}
f_{\mS} (x)=  \frac{1}{2}(x-x_* )^\top \mZ(x - x_* ), \qquad x\in \R^n, \; x_*\in \cL.
\end{equation}

For all $\mS$ and all $x,h \in \R^n$ we have the expansion
$f_{\mS} (x + h) = f_{\mS} (x) + \dotprod{\nabla f_{\mS} (x), h}_{\mB}
+\frac{1}{2} \dotprod{(\nabla^2 f_{\mS})h, h}_{\mB}$,
where
\begin{equation} \label{eq:fder} \nabla f_{\mS} (x) =\mB^{-1}\mA^\top \mH (\mA x-b)\qquad
\text{and}  \qquad \nabla^2 f_{\mS}  = \mB^{-1} \mZ
\end{equation}
are the gradient and Hessian of $f_{\mS}$ with respect to the $\mB$-inner product, respectively.\footnote{If $\mB = \mI $, then $\dotprod{\cdot, \cdot}_{\mB}$ is the standard Euclidean inner product, and we recover formulas for the standard
gradient and Hessian. Note that $\mB^{-1} \mZ$ is both self-adjoint and positive semidefinite with respect to the $\mB$-inner product. Indeed, for all $x, y \in \R^n$ we have 
$\dotprod{\mB^{-1} \mZ x, y}_{\mB} = \dotprod{\mZ x, y}_{\mI} = \dotprod{x, \mZ y}_{\mI} = \dotprod{x, \mB^{-1} \mZ y}_{\mB}$, and
$\dotprod{\mB^{-1} \mZ x, x}_{\mB} = \dotprod{\mZ x, x}_{ \mI } \geq 0$.}   
In view of \eqref{eq:Z} and \eqref{eq:fder}, the gradient can also be written as
\begin{equation} \label{eq:fder2} \nabla f_{\mS} (x)  = \mB^{-1} \mZ(x-x_*), \qquad x\in \R^n, \; x_*\in \cL.
\end{equation}

Identities \eqref{eq:all_equal} in the following lemma explain why algorithm \eqref{eq:alg1-informal} can be equivalently written as stochastic gradient descent \eqref{eq:alg:SGD}, stochastic Newton method  \eqref{eq:alg:SNM}, stochastic fixed point method \eqref{eq:alg:SFP}, and stochastic projection method
\eqref{eq:alg:SPM}. For instance, the identity $(\nabla^2 f_{\mS}) \nabla f_{\mS}(x) = \nabla f_{\mS}(x)$ means that the stochastic gradients of $f_{\mS}$ are eigenvectors of the stochastic Hessian $\nabla^2 f_{\mS}$, corresponding to eigenvalue one. 

\begin{lemma} \label{lem:all_sort_of_stuff_is_equal}    For all $x\in \R^n$, we have
\begin{equation}\label{eq:all_equal}  \nabla f_{\mS}(x) = (\nabla^2 f_{\mS}) \nabla f_{\mS}(x)=(\nabla^2 f_{\mS} )^{\dagger_{\mB}}\nabla f_{\mS}(x)=x-\Pi^{\mB}_{\cL_{\mS}}(x)= \mB^{-1} \mA^\top  \mH (\mA x-b).\end{equation}
Moreover,
\begin{equation}\label{eq:v(x)}f_{\mS}(x) = \frac{1}{2}\|\nabla f_{\mS}(x)\|_{\mB}^2.\end{equation}
If $\cL_{\mS}$ is the set of minimizers of $f_{\mS}$, then $\cL\subseteq \cL_{\mS}$, and
\begin{enumerate}
\item[(i)]  $\cL_{\mS} = \{x\;:\; f_{\mS}(x)=0\} = \{x\;:\; \nabla f_{\mS}(x) = 0\}$
\item[(ii)] $\cL_{\mS} = x_* + \kernel{\mB^{-1}\mZ}$ for all $x_*\in \cL$
\item[(iii)] $\cL_{\mS} = \{x\;:\; \mB^{-1}\mA^\top \mH \mA x = \mB^{-1}\mA^\top \mH b\}$ \hfill see \eqref{eq:problem:preconditioning}
\item[(iv)] $\cL_{\mS} = \{x\;:\; \mS^\top \mA x = \mS^\top b \}$ \hfill see \eqref{eq:problem:intersection}
\end{enumerate}
Finally, for all $x\in \R^n$ we have the identity
\begin{equation}\label{eq:zero} f_{\mS}(x-\nabla f_{\mS}(x)) = 0.
\end{equation}

\end{lemma}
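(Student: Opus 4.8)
The plan is to verify the chain of identities in \eqref{eq:all_equal} essentially by direct substitution of the explicit formulas for $\nabla f_{\mS}$, $\nabla^2 f_{\mS}$, and $\Pi^{\mB}_{\cL_{\mS}}$, using the idempotence relation \eqref{eq:ZBZ} as the main algebraic lever. First I would record that $\nabla f_{\mS}(x) = \mB^{-1}\mA^\top \mH(\mA x - b)$ from \eqref{eq:fder}, which immediately gives the last equality in \eqref{eq:all_equal}. Next, using $\nabla^2 f_{\mS} = \mB^{-1}\mZ$ and \eqref{eq:fder2}, I would compute $(\nabla^2 f_{\mS})\nabla f_{\mS}(x) = \mB^{-1}\mZ \mB^{-1}\mZ (x-x_*) = \mB^{-1}\mZ(x-x_*)$ by the first identity in \eqref{eq:ZBZ}, which equals $\nabla f_{\mS}(x)$ again by \eqref{eq:fder2}. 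For the $\mB$-pseudoinverse term, I would expand $(\nabla^2 f_{\mS})^{\dagger_\mB} = (\mB^{-1}\mZ)^{\dagger_\mB}$ using the definition \eqref{eq:B-pseudo} and the symmetry of $\mZ$ and $\mB$, showing it again acts as the identity on $\range{\nabla f_{\mS}(x)} \subseteq \range{\mB^{-1}\mZ}$; the clean way is to note that $\mB^{-1}\mZ$ is a $\mB$-self-adjoint idempotent (stated in the footnote after \eqref{eq:fder}), so its $\mB$-pseudoinverse coincides with itself, reducing this case to the previous one. Finally, $x - \Pi^{\mB}_{\cL_{\mS}}(x)$: applying \eqref{eq:project_linear} with $\mS^\top\mA$ in place of $\mA$ and $\mS^\top b$ in place of $b$ gives $\Pi^{\mB}_{\cL_{\mS}}(x) = x - \mB^{-1}(\mS^\top\mA)^\top(\mS^\top\mA\mB^{-1}\mA^\top\mS)^\dagger(\mS^\top\mA x - \mS^\top b) = x - \mB^{-1}\mA^\top\mH(\mA x - b)$, so $x - \Pi^{\mB}_{\cL_{\mS}}(x) = \nabla f_{\mS}(x)$.

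For \eqref{eq:v(x)} I would start from \eqref{eq:prodstoch2}, write $f_{\mS}(x) = \tfrac12 (x-x_*)^\top \mZ(x-x_*)$, and insert the second relation in \eqref{eq:ZBZ}, $\mZ = \mZ\mB^{-1}\mZ$, to get $f_{\mS}(x) = \tfrac12 (x-x_*)^\top \mZ\mB^{-1}\mZ(x-x_*) = \tfrac12 \|\mB^{-1}\mZ(x-x_*)\|_\mB^2 = \tfrac12\|\nabla f_{\mS}(x)\|_\mB^2$ by \eqref{eq:fder2}.

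For the structural claims, I would first note $\cL \subseteq \cL_{\mS}$ because $\mA x = b$ implies $\mS^\top \mA x = \mS^\top b$, so every solution of the original system lies in $\cL_{\mS} = \{x : \mS^\top\mA x = \mS^\top b\}$; and since $f_{\mS}(x) = \tfrac12\|\mA x - b\|_\mH^2 \geq 0$ vanishes there (as $\mH$ kills $\mS^\perp$ directions — more precisely $\mH(\mS^\top\mA x - \mS^\top b) = 0$ forces $f_{\mS}(x)=0$), $\cL$ is contained in the minimizer set. Then (i) follows from nonnegativity of $f_{\mS}$ together with \eqref{eq:v(x)}: $f_{\mS}(x)=0 \iff \|\nabla f_{\mS}(x)\|_\mB = 0 \iff \nabla f_{\mS}(x)=0$. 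For (ii), from \eqref{eq:fder2} the zero set of $\nabla f_{\mS}$ is exactly $x_* + \kernel{\mB^{-1}\mZ}$ for any $x_* \in \cL$. Identity (iii) is just the statement that $\nabla f_{\mS}(x)=0 \iff \mB^{-1}\mA^\top\mH\mA x = \mB^{-1}\mA^\top\mH b$, immediate from \eqref{eq:fder}. For (iv), I need $\{x:\mS^\top\mA x = \mS^\top b\} = \cL_{\mS}$, i.e. that $\mS^\top(\mA x - b)=0 \iff \mB^{-1}\mA^\top\mH(\mA x-b)=0$; the forward direction is clear, and the reverse uses that $\mH = \mS(\cdots)^\dagger\mS^\top$ has the same null space behavior on $\range{\mS^\top}$ as the projection onto $\range{\mS^\top\mA\mB^{-1/2}}$ — concretely, $\mA^\top\mH(\mA x - b)=0$ implies $(\mA x-b)^\top\mH(\mA x-b)=0$ (multiply by $(x-x_*)^\top$ appropriately and use \eqref{eq:prodstoch2}, \eqref{eq:ZBZ}), hence $f_{\mS}(x)=0$, hence $x\in\cL_{\mS}$ by (i), and then the explicit description of $\cL_{\mS}$ via $\mS^\top\mA$. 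Finally \eqref{eq:zero} is immediate: $f_{\mS}(x - \nabla f_{\mS}(x)) = \tfrac12\|\nabla f_{\mS}(x - \nabla f_{\mS}(x))\|_\mB^2$ by \eqref{eq:v(x)}, and $\nabla f_{\mS}(x - \nabla f_{\mS}(x)) = \mB^{-1}\mZ(x - \nabla f_{\mS}(x) - x_*) = \mB^{-1}\mZ(x-x_*) - \mB^{-1}\mZ\mB^{-1}\mZ(x-x_*) = 0$ by \eqref{eq:fder2} and \eqref{eq:ZBZ}.

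The main obstacle I anticipate is the $(\nabla^2 f_{\mS})^{\dagger_\mB}$ identity and part (iv): both hinge on correctly handling the interaction of the Moore--Penrose pseudoinverse inside $\mH$ with the $\mB$-geometry, and the cleanest route is to consistently pass to the transformed matrix $\mB^{-1/2}\mA^\top\mS$ so that $\mB^{-1}\mZ$ becomes (conjugate to) an orthogonal projection; once that normalization is in place, every identity reduces to the projection property $P^2 = P$.
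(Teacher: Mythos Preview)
Your approach is essentially the same as the paper's --- direct substitution of the explicit formulas plus the idempotence identity \eqref{eq:ZBZ} --- and most of your steps are correct. Two points need fixing.

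First, your ``clean way'' for the $\dagger_{\mB}$ identity is not quite right. It is \emph{not} true in general that $(\mB^{-1}\mZ)^{\dagger_{\mB}} = \mB^{-1}\mZ$ as matrices: unfolding the definition \eqref{eq:B-pseudo} gives $(\mB^{-1}\mZ)^{\dagger_{\mB}} = \mQ\mQ^\dagger$ with $\mQ = \mB^{-1}\mZ\mB^{-1}$ symmetric, which is the \emph{standard} orthogonal projection onto $\range{\mB^{-1}\mZ}$, whereas $\mB^{-1}\mZ$ is the $\mB$-orthogonal (i.e.\ oblique) projection onto the same subspace. These differ unless $\mB$ commutes suitably with $\mZ$. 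What \emph{is} true --- and is all you need --- is that both act as the identity on $\range{\mB^{-1}\mZ}$, which contains $\nabla f_{\mS}(x)$. The paper simply does the direct computation and closes with $\mM\mM^\dagger\mM=\mM$; your first-stated plan (``acts as the identity on $\range{\nabla f_{\mS}(x)}$'') is fine, just drop the stronger claim.

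Second, your argument for (iv) is circular. You show (iii) $\Rightarrow f_{\mS}(x)=0$, conclude ``$x\in\cL_{\mS}$ by (i)'', and then invoke ``the explicit description of $\cL_{\mS}$ via $\mS^\top\mA$'' --- but that description is precisely (iv), the thing you are proving. The clean fix is the one you already have in hand from the first part of the lemma: you established $x - \Pi^{\mB}_{\cL_{\mS}}(x) = \nabla f_{\mS}(x)$ using the intro's definition $\cL_{\mS} = \{x:\mS^\top\mA x=\mS^\top b\}$, so $\nabla f_{\mS}(x)=0 \iff x=\Pi^{\mB}_{\cL_{\mS}}(x) \iff x\in\{x:\mS^\top\mA x=\mS^\top b\}$. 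This is exactly how the paper handles (iii)$\iff$(iv), and it closes the loop without circularity. (Alternatively, from $(\mA x-b)^\top\mH(\mA x-b)=0$ you can argue directly that $\mS^\top(\mA x-b)=0$ because $\mS^\top(\mA x-b)\in\range{\mS^\top\mA}=\range{\mS^\top\mA\mB^{-1}\mA^\top\mS}$, on which the pseudoinverse is positive definite --- but the projection route is shorter.)
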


\begin{proof} Pick any $x_*\in \cL$. First, we have $\Pi^{\mB}_{\cL_{\mS}}(x) \overset{\eqref{eq:project_linear}}{=} x - \mB^{-1} \mA^\top  \mH (\mA x-b) \overset{\eqref{eq:fder}}{=} x-\nabla f_{\mS}(x)$. To establish \eqref{eq:all_equal}, it now only remains to consider the two expressions  involving the Hessian. We have
\[\nabla^2 f_{\mS} \nabla f_{\mS}(x) \overset{\eqref{eq:fder}+\eqref{eq:fder2}}{=} \mB^{-1}\mZ \mB^{-1}\mZ  (x-x_*) \overset{\eqref{eq:ZBZ}}{=} \mB^{-1}\mZ (x-x_*) \overset{\eqref{eq:fder2}}{=} \nabla f_{\mS}(x),\] and
\begin{eqnarray*}
 (\nabla^2 f_{\mS} )^{\dagger_{\mB}}\nabla f_{\mS}(x)
&\overset{\eqref{eq:B-pseudo}}{=}&  \mB^{-1} (\nabla^2 f_{\mS})^\top \left((\nabla^2 f_{\mS}) \mB^{-1} (\nabla^2 f_{\mS})^\top\right)^\dagger \nabla f_{\mS}(x)\\
&\overset{\eqref{eq:fder}}{=} & \mB^{-1}  (\mB^{-1}\mZ )^\top \left((\mB^{-1}\mZ ) \mB^{-1} (\mB^{-1}\mZ )^\top\right)^\dagger \mB^{-1}\mZ  (x-x_*)\\
 &=& \mB^{-1} \mZ  \mB^{-1} \left(\mB^{-1}\mZ  \mB^{-1} \mZ  \mB^{-1}\right)^\dagger \mB^{-1}\mZ  (x-x_*)\\
 &\overset{\eqref{eq:ZBZ}}{=}& \left(\mB^{-1} \mZ \mB^{-1}\right) \left(\mB^{-1}\mZ   \mB^{-1}\right)^\dagger \left(\mB^{-1}\mZ  \mB^{-1} \right) \mB(x-x_*)\\
 &=&  \mB^{-1} \mZ   (x-x_*) \quad \overset{\eqref{eq:fder2}}{=} \quad \nabla f_{\mS}(x).
\end{eqnarray*}
Identity  \eqref{eq:v(x)} follows from
\[ \frac{1}{2}\|\nabla f_{\mS}(x)\|_{\mB}^2 \overset{\eqref{eq:fder2}}{=} \frac{1}{2}(x-x_*)^\top  \mZ \mB^{-1} \mZ(x-x_*)\overset{\eqref{eq:ZBZ}}{=} \frac{1}{2}(x-x_*)^\top  \mZ(x-x_*)\overset{\eqref{eq:prodstoch2}}{=} f_{\mS}(x).\]

If $x\in \cL$, then by picking $x_*=x$ in \eqref{eq:fder2}, we see that $x\in \cL_{\mS}$.
It remains to show that the sets defined in (i)--(iv) are identical. Equivalence between (i) and (ii) follows from \eqref{eq:fder2}. Now consider (ii) and (iii). Any $x_*\in \cL$ belongs to the set defined in (iii), which follows immediately by substituting  $b = \mA x_*$. The rest follows after observing the nullspaces are identical. In order to show that (iii) and (iv) are equivalent, it suffices to compute $\Pi^{\mB}_{\cL_{\mS}}(x)$ and observe that $\Pi^{\mB}_{\cL_{\mS}}(x) = x$ if and only if $x$ belongs to the set defined in (iii).

It remains to  establish \eqref{eq:zero}. In view of (i), it suffices to show that $x - \nabla f_{\mS}(x)\in \cL_{\mS}$. However, from \eqref{eq:all_equal} we know that $x - \nabla f_{\mS}(x)= \Pi^\mB_{\cL_{\mS}}(x)\in \cL_{\mS}$.
\end{proof}


\subsection{Stochastic reformulation}

In order to proceed, we shall enforce a basic assumption on $\cD$.

\begin{assumption}[Finite mean]\label{ass:H_finite}
The random matrix $\mH$ has a mean. That is, the matrix
$\EE{\mS\sim \cD}{\mH } = \EE{\mS\sim \cD}{\mS(\mS^\top \mA \mB^{-1}\mA^\top \mS)^\dagger \mS^\top}
$ has finite entries.
\end{assumption}

This is an assumption on $\cD$ since a suitable distribution satisfying it exists for all $\mA \in \R^{m\times n}$ and $\mB\succ 0$. Note that if the assumption holds, then $\Exp{\mH}$ is symmetric and positive semidefinite.  We shall enforce this assumption throughout the paper and hence will not henceforth refer to it.

\begin{example} Let $\cD$ be the uniform distribution over unit basis vectors in $\R^m$. That is, $\mS = e_i$ (the $i$th unit basis vector in $\R^m$) with probability $1/m$. Then 
\[\Exp{\mH} = \sum_{i=1}^m \frac{1}{m} e_i (\mA_{i:} \mB^{-1} \mA_{i:}^\top)^\dagger e_i^\top = \frac{1}{m}\Diag{\alpha_1,\dots,\alpha_m },\]
where $\alpha_i=1/\|\mA_{i:}^\top\|_{\mB^{-1}}^{2}$ for $i=1,2,\dots,m$. If  $\mA$ has nonzero rows, then $\Exp{\mH}\succ 0$.
\end{example}

In this paper we reformulate the linear system~\eqref{eq:linear_system} as the {\em stochastic  optimization problem}
\begin{equation} \label{eq:min_f}
\boxed{\min_{x\in \R^n} \left\{f (x) \eqdef \EE{\mS \sim \cD}{f_\mS (x)} \right\}}
\end{equation}

Under Assumption~\ref{ass:H_finite}, it is straightforward to check that the expectation in \eqref{eq:min_f} is finite for all $x$, and hence $f$ is well defined. The following is a direct consequence of Lemma~\ref{lem:all_sort_of_stuff_is_equal}. We shall use these formulas throughout the paper.

\begin{lemma}[Representations of $f$] \label{lem:f-various} Function $f$ defined in \eqref{eq:min_f} can be represented in multiple ways:
$f(x) =  \frac{1}{2}\Exp{\|x-\Pi^{\mB}_{\cL_{\mS}}(x)\|_{\mB}^2}= \frac{1}{2}\Exp{\|\nabla f_{\mS}(x)\|_{\mB}^2}.$
Moreover, 
\begin{equation}\label{eq:probminf}
f (x)  \overset{\eqref{eq:prodstoch}}{=} \frac{1}{2}\norm{\mA x -b}_{\Exp{\mH}}^2 = \frac{1}{2} (\mA x - b)^\top\Exp{\mH} (\mA x- b),
\end{equation}
and for any $x_*\in \cL$ we can write
\begin{equation}\label{eq:probminf2}
f (x) =
\frac{1}{2}(x -x_* )^\top \Exp{\mZ}(x - x_*).
\end{equation}
\end{lemma}

Since $\Exp{\mH}\succeq 0$, $f$ is a convex quadratic function. Moreover, $f$ is nonnegative. 

\begin{lemma}\label{lem:interchange} 
We have the identities $\nabla \Exp{f_{\mS}(x)} = \Exp{\nabla f_{\mS}(x)}$ and $\nabla^2 \Exp{f_{\mS}(x)} = \Exp{\nabla^2 f_{\mS}(x)}= \mB^{-1}\Exp{\mZ}$ for all $x\in \R^n$. That is, differentiation and expectation can be interchanged.
\end{lemma}
\begin{proof}
First note that ${\rm E}\left[f_{\mS}(x)\right] \overset{\eqref{eq:problem:stoch_opt}}{=}  f(x) \overset{\eqref{eq:probminf}}{=} \frac{1}{2} (\mA x - b)^\top {\rm E}[\mH](\mA x - b )$. Therefore, the gradient of $f$ with respect to the $\mB$--inner product is $\nabla f(x) = \mB^{-1} \mA^\top {\rm E}[\mH] (\mA x - b).$ Using this, by linearity of expectation we get
\[ {\rm E} \left[\nabla f_{\mS} (x) \right]  \overset{\eqref{eq:fder}}{=} \mB^{-1} \mA^\top {\rm E} \left[ \mH\right] (\mA x - b) = \nabla f(x) \overset{\eqref{eq:problem:stoch_opt}}{=}  \nabla  {\rm E}\left[ f_{\mS}(x) \right].\] The claim about the Hessians follows the same reasoning.
\end{proof}

In view of the above lemma, the gradient and Hessian of $f$ (with respect to the $\mB$-inner product) are given by
\begin{equation}\label{eq:grad_f}
\nabla f (x) = \EE{\mS \sim \cD}{ \nabla f_\mS (x)} \overset{\eqref{eq:fder2} }{=} \mB^{-1} \Exp{\mZ}(x- x_* ), \; \nabla^2 f =\Exp{\nabla^2 f_{\mS}} = \mB^{-1}\Exp{\mZ}, 
\end{equation}
respectively, where $x_*$ is any point in $\cL$.

The set of minimizers of $f$, denoted $\cX$, can be  represented in several ways, as captured by our next result.  It immediately follows  that the four stochastic formulations mentioned in the introduction are equivalent. 

\begin{theorem}[Equivalence of stochastic formulations] \label{thm:X} Let $x_*\in \cL$. The following sets are identical:  \begin{enumerate}
\item[(i)] $\cX = \arg \min f(x) = \{x\;:\; f(x) = 0\}= \{x\;:\; \nabla f(x) = 0\}$  \hfill see \eqref{eq:problem:stoch_opt}
\item[(ii)] $\cX = \{x\;:\; \mB^{-1} \mA^\top \Exp{\mH} \mA x = \mB^{-1} \mA^\top \Exp{\mH} b\} = x_*+ \kernel{\Exp{\mZ}}$  \hfill see \eqref{eq:problem:preconditioning}
\item[(iii)] $\cX  = \{x\;:\; \Exp{\Pi^{\mB}_{\cL_{\mS}}(x)} = x\}$ \hfill see \eqref{eq:problem:fixed_point}
\item[(iv)] $\cX= \{x\;:\; \Prob(x\in \cL_{\mS}) = 1\}$ \hfill see \eqref{eq:problem:intersection}
\end{enumerate}
As a consequence, the stochastic problems
\eqref{eq:problem:stoch_opt}, \eqref{eq:problem:preconditioning},
\eqref{eq:problem:fixed_point}, and
\eqref{eq:problem:intersection}
 are equivalent (i.e., their solutions sets are identical). Moreover, the set $\cX$ does not depend on $\mB$.
\end{theorem}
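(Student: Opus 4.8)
The plan is to use the ``stochastic optimization'' description in (i) as a hub and connect each of (ii)--(iv) to it, leaning throughout on Lemma~\ref{lem:all_sort_of_stuff_is_equal} and Lemma~\ref{lem:f-various}.

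First I would settle the three internal equalities inside (i). Since $\Exp{\mH}\succeq 0$, Lemma~\ref{lem:f-various} exhibits $f$ as a nonnegative convex quadratic, and \eqref{eq:probminf2} evaluated at $x=x_*$ gives $f(x_*)=0$; hence $\min f=0$ and $\arg\min f=\{x:f(x)=0\}$. As $f$ is convex and differentiable, first-order optimality yields $\arg\min f=\{x:\nabla f(x)=0\}$.

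Next, (i) $\iff$ (ii) and (i) $\iff$ (iii) are linear algebra once we have the two representations of $\nabla f$: from \eqref{eq:grad_f}, $\nabla f(x)=\mB^{-1}\Exp{\mZ}(x-x_*)$; and by taking expectations in the chain $\nabla f_{\mS}(x)=x-\Pi^{\mB}_{\cL_{\mS}}(x)$ of \eqref{eq:all_equal}, $\nabla f(x)=x-\Exp{\Pi^{\mB}_{\cL_{\mS}}(x)}$. Invertibility of $\mB$ turns $\nabla f(x)=0$ into $\Exp{\mZ}(x-x_*)=0$, i.e.\ $x\in x_*+\kernel{\Exp{\mZ}}$; and using $\Exp{\mZ}=\mA^\top\Exp{\mH}\mA$ together with $\mA x_*=b$ rewrites this as $\mB^{-1}\mA^\top\Exp{\mH}\mA x=\mB^{-1}\mA^\top\Exp{\mH}b$, which is (ii). Meanwhile $\nabla f(x)=0$ is literally $x=\Exp{\Pi^{\mB}_{\cL_{\mS}}(x)}$, which is (iii).

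For (i) $\iff$ (iv) I would use $f(x)=\tfrac12\Exp{\|\nabla f_{\mS}(x)\|_{\mB}^2}$ from Lemma~\ref{lem:f-various}: the integrand is nonnegative, so $f(x)=0$ if and only if $\nabla f_{\mS}(x)=0$ with probability one; and by part (i) of Lemma~\ref{lem:all_sort_of_stuff_is_equal}, $\nabla f_{\mS}(x)=0\iff x\in\cL_{\mS}$, giving $\{x:f(x)=0\}=\{x:\Prob(x\in\cL_{\mS})=1\}$. Finally, $\mB$-independence of $\cX$ follows from (iv) combined with part (iv) of Lemma~\ref{lem:all_sort_of_stuff_is_equal}, which identifies $\cL_{\mS}=\{x:\mS^\top\mA x=\mS^\top b\}$, a set not involving $\mB$; hence neither does $\{x:\Prob(x\in\cL_{\mS})=1\}$. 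The equivalence of the four stochastic problems is then an immediate corollary.

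The only step that is not bookkeeping with identities already established is the ``a nonnegative random variable with zero expectation vanishes almost surely'' argument in (i) $\iff$ (iv); it is routine measure theory, and Assumption~\ref{ass:H_finite} is precisely what guarantees the relevant expectation is finite so that the argument applies. I expect this to be the only point requiring a word of care; everything else reduces to algebraic manipulation of the formulas in the two preceding lemmas.
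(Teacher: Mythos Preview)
Your proposal is correct and follows essentially the same route as the paper: use (i) as the hub, derive (ii) and (iii) from the two representations of $\nabla f$ in \eqref{eq:grad_f} and \eqref{eq:all_equal}, handle (iv) via the ``nonnegative random variable with zero mean vanishes a.s.'' argument applied to $\|\nabla f_{\mS}(x)\|_{\mB}^2$ (equivalently $\|x-\Pi^{\mB}_{\cL_{\mS}}(x)\|_{\mB}^2$), and read off $\mB$-independence from the description $\cL_{\mS}=\{x:\mS^\top\mA x=\mS^\top b\}$. The only difference is cosmetic: the paper spells out the measure-theoretic step in (iv) explicitly via a Markov-type bound and a union over $t_i=1/i$, whereas you (correctly) flag it as routine.
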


\begin{proof} 
As $f$ is convex, nonnegative and achieving the value of zero (since $\cL\neq \emptyset$), the sets in (i) are all identical. We shall now show that the sets defined in (ii)--(iv) are equal to that defined in (i). Using the formula for the gradient from  \eqref{eq:grad_f}, we  see that $\{x\;:\; \nabla f(x) = 0\}= \{x\;:\; \mB^{-1}\Exp{\mZ}(x-x_*) = 0\} = \{x\;:\; \Exp{\mZ}(x-x_*) = 0\} = x_* + \{h \;:\; \Exp{\mZ}h = 0\} = x_* + \kernel{\Exp{\mZ}}$, which shows that (i) and (ii) are the same. Equivalence of (i) and (iii) follows by taking expectations in $\eqref{eq:all_equal}$ to obtain $\nabla f(x)  = \Exp{\nabla f_{\mS}(x)} \overset{\eqref{eq:all_equal}}{=} \Exp{x-\Pi^{\mB}_{\cL_{\mS}}(x)}$.

It remains to establish equivalence between (i) and (iv). Let  
\begin{equation}\label{eq:suhh0sys}\cX = \{x\;:\; f(x)=0\} \overset{\text{(Lemma~\ref{lem:f-various})}}{=} \left\{x\;:\; \Exp{\left\|x- \Pi^{\mB}_{\cL_{\mS}}(x)\right\|_{\mB}^2} = 0\right\}\end{equation}
and let $\cX'$ be the set from (iv).
For easier reference, let $\xi_{\mS}(x) \eqdef \left\|x- \Pi^{\mB}_{\cL_{\mS}}(x)\right\|_{\mB}^2$.  The following three probabilistic events are identical: 
\begin{equation}\label{eq:equiv-events}\left[ x\in \cL_{\mS} \right] =  \left[ x=\Pi^{\mB}_{\cL_{\mS}}(x) \right] =  \left[\xi_{\mS}(x) = 0\right].\end{equation} Therefore, if $x\in \cX'$, then the random variable $\xi_{\mS}(x)$ is equal to zero with probability 1, and hence $x\in \cX$. Let us now establish the reverse inclusion. First, let $1_{[\xi_{\mS}(x) \geq t]}$ be the indicator function of the event $[\xi_{\mS}(x) \geq t]$. Note that since $\xi_{\mS}(x)$ is a nonnegative random variable, for all $t \in \R$ we have the inequality \begin{equation}\label{eq:kjshbnvb}\xi_{\mS}(x) \geq t 1_{\xi_{\mS}(x) \geq t}.\end{equation}
Now take $x\in \cX$ and consider  $t>0$. By taking expectations in \eqref{eq:kjshbnvb}, we obtain
\[0=\Exp{\xi_{\mS}(x)} \geq \Exp{t 1_{\xi_{\mS}(x) \geq t}} = t \Exp{1_{\xi_{\mS}(x) \geq t}} = t \Prob(\xi_{\mS}(x) \geq t),\]
which implies that $\Prob(\xi_{\mS}(x)\geq t) = 0$. Now choose $t_i = 1/i$ for $i=1,2,\dots$ and note that the event $[\xi_{\mS}(x) > 0]$ can be written as
$[\xi_{\mS}(x) > 0] = \bigcup_{i=1}^{\infty} \; [\xi_{\mS}(x) \geq t_i].$
Therefore, by the union bound, $\Prob(\xi_{\mS}(x) > 0) = 0$, which immediately implies that $\Prob(\xi_{\mS}(x) = 0) = 1$. From \eqref{eq:equiv-events} we conclude that $x\in \cX'$.

That $\cX$ does not depend on $\mB$ follows from representation (iv).
\end{proof}
 

\subsection{Exactness of the Reformulations} \label{sec:exactness}

In this section ask the following question: when are the stochastic formulations \eqref{eq:problem:stoch_opt}, \eqref{eq:problem:preconditioning}, \eqref{eq:problem:fixed_point},  \eqref{eq:problem:intersection} equivalent to the  linear system \eqref{eq:linear_system}? This leads to the concept of {\em exactness}, captured by the following assumption. 

\begin{assumption}[Exactness] \label{ass:main} Stochastic reformulations \eqref{eq:problem:stoch_opt}, \eqref{eq:problem:preconditioning}, \eqref{eq:problem:fixed_point},  \eqref{eq:problem:intersection}  of problem \eqref{eq:linear_system} are {\em exact}. 
That is,   $\cX=\cL$.   
\end{assumption}

We do not need  this assumption for all our results, and hence we will specifically invoke it when needed. 
For future reference in the paper, it will be useful to be able to draw upon several equivalent characterizations of exactness. 

\begin{theorem}[Exactness] \label{thm:exactness} The following statements are equivalent:
\begin{enumerate}
\item[(i)] Assumption~\ref{ass:main} holds

\item[(ii)] $\kernel{\Exp{\mZ}} = \kernel{\mA}$


\item[(iii)] $\kernel{\mB^{-1/2}\Exp{\mZ}\mB^{-1/2}} = \kernel{\mA \mB^{-1/2}}$

\item[(iv)] $\range{\mA} \cap \kernel{\Exp{\mH}} = \{0\}$ 


\end{enumerate}
\end{theorem}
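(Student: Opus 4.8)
The plan is to prove the chain of equivalences (i) $\Leftrightarrow$ (ii), (ii) $\Leftrightarrow$ (iii), and (ii) $\Leftrightarrow$ (iv), using the representation $\cX = x_* + \kernel{\Exp{\mZ}}$ from Theorem~\ref{thm:X}(ii) and the relation $\cL = x_* + \kernel{\mA}$, which holds since $\cL = \{x : \mA x = b\}$ and $\mA x_* = b$. With these two identities in hand, exactness $\cX = \cL$ is literally the same statement as $\kernel{\Exp{\mZ}} = \kernel{\mA}$, giving (i) $\Leftrightarrow$ (ii) immediately.

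For (ii) $\Leftrightarrow$ (iii): the substitution $h = \mB^{-1/2} h'$ is a linear bijection of $\R^n$, so $\Exp{\mZ} h = 0 \iff \mB^{-1/2}\Exp{\mZ}\mB^{-1/2} h' = 0$ after multiplying by the invertible $\mB^{-1/2}$ on the left (note $\mB^{-1/2}\Exp{\mZ} h = 0 \iff \Exp{\mZ}h = 0$ since $\mB^{-1/2}\succ 0$), and similarly $\mA h = 0 \iff \mA\mB^{-1/2} h' = 0$. Tracking the bijection through both kernels shows that (ii) and (iii) are equivalent; this is routine.

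The substantive step is (ii) $\Leftrightarrow$ (iv). The inclusion $\kernel{\mA} \subseteq \kernel{\Exp{\mZ}}$ always holds, since $\mZ = \mA^\top \mH \mA$ implies $\mZ h = 0$ whenever $\mA h = 0$, and this passes to the expectation; so exactness is really the reverse inclusion $\kernel{\Exp{\mZ}} \subseteq \kernel{\mA}$. The key observation is that for $h \in \kernel{\Exp{\mZ}}$ we have $0 = h^\top \Exp{\mZ} h = \Exp{h^\top \mA^\top \mH \mA h} = \Exp{\|\mA h\|_{\mH}^2}$, and since $\mH \succeq 0$ each integrand is nonnegative, forcing $\mH \mA h = 0$ almost surely; conversely $\mH\mA h = 0$ a.s. clearly gives $\Exp{\mZ}h = 0$. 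Thus $\kernel{\Exp{\mZ}} = \{h : \mH\mA h = 0 \text{ a.s.}\}$. Now $h \in \kernel{\Exp{\mZ}} \setminus \kernel{\mA}$ exists iff there is a nonzero vector $y = \mA h \in \range{\mA}$ with $\mH y = 0$ a.s., i.e. $y \in \range{\mA}$ with $y^\top \Exp{\mH} y = \Exp{y^\top \mH y} = \Exp{\|y\|_{\mH}^2} = 0$, which (again using $\mH \succeq 0$ and $\Exp{\mH}\succeq 0$) is equivalent to $\Exp{\mH} y = 0$, i.e. $y \in \kernel{\Exp{\mH}}$. Hence $\kernel{\Exp{\mZ}} = \kernel{\mA}$ fails exactly when $\range{\mA}\cap\kernel{\Exp{\mH}} \neq \{0\}$, which is the contrapositive of (ii) $\Leftrightarrow$ (iv).

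The main obstacle, such as it is, is being careful about the almost-sure statements: passing from $\Exp{\|\mA h\|_{\mH}^2} = 0$ to $\mH\mA h = 0$ a.s. uses nonnegativity of the integrand, and one must also make sure the step $y^\top \Exp{\mH} y = 0 \implies \Exp{\mH} y = 0$ is invoked correctly (true for any positive semidefinite matrix). Everything else is linear algebra. I would present (ii) $\Leftrightarrow$ (iv) in the ``nonzero witness'' form above rather than chasing set inclusions directly, as it makes the role of $\range{\mA}$ transparent.
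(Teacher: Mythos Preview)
Your proof is correct and follows essentially the same route as the paper: (i)$\Leftrightarrow$(ii) via the representation $\cX=x_*+\kernel{\Exp{\mZ}}$ from Theorem~\ref{thm:X}, (ii)$\Leftrightarrow$(iii) by the bijective change of variables $h\mapsto\mB^{-1/2}h$, and (ii)$\Leftrightarrow$(iv) by showing that $\kernel{\Exp{\mZ}}$ strictly contains $\kernel{\mA}$ precisely when some nonzero $y\in\range{\mA}$ lies in $\kernel{\Exp{\mH}}$. The only minor difference is that for (ii)$\Leftrightarrow$(iv) the paper bypasses your almost-sure detour entirely by factoring $\Exp{\mZ}=\mA^\top(\Exp{\mH})^{1/2}(\Exp{\mH})^{1/2}\mA$, so that $\kernel{\Exp{\mZ}}=\kernel{(\Exp{\mH})^{1/2}\mA}$, and then appeals to the elementary fact that $\kernel{\mM\mN}=\kernel{\mN}$ iff $\range{\mN}\cap\kernel{\mM}=\{0\}$ together with $\kernel{(\Exp{\mH})^{1/2}}=\kernel{\Exp{\mH}}$; this is slightly cleaner but your argument is equally valid.
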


\begin{proof} 
Choose any $x_*\in \cL$. We know that $\cL = x_* + \kernel{\mA}$. On the other hand, Theorem~\ref{thm:X} says that $\cX = x_* + \kernel{\Exp{\mZ}}$. This establishes equivalence of (i) and (ii). If (ii) holds, then $ \kernel{\mA}=\kernel{\Exp{\mZ}}=\kernel{\mB^{-1/2}\Exp{\mZ}}$, and (iii) follows. If (iii) holds, then $\kernel{\mA} = \kernel{\mB^{-1/2}\Exp{\mZ}} = \kernel{\Exp{\mZ}}$, proving (ii). We now show that (ii) and (iv) are equivalent. Since $\Exp{\mZ} = \mA^\top (\Exp{\mH})^{1/2} (\Exp{\mH})^{1/2}  \mA$, we have $\kernel{\Exp{\mZ}} = \kernel{(\Exp{\mH})^{1/2}  \mA}$. Moreover,  a) $\kernel{(\Exp{\mH})^{1/2}  \mA} = \kernel{\mA}$ if and only if $\range{\mA}\cap \kernel{(\Exp{\mH})^{1/2}} = \{0\}$, and b) $\kernel{(\Exp{\mH})^{1/2}} = \kernel{\Exp{\mH}}$. It remains to combine these observations.
\end{proof}

 We now list two sufficient conditions for exactness.
 
\begin{lemma}[Sufficient conditions]
 Any of these conditions implies that Assumption~\ref{ass:main}  is satisfied:
\begin{enumerate}
 \item[(i)] $\Exp{\mH } \succ 0$
 \item[(ii)] $\kernel{\Exp{\mH}} \subseteq \kernel{\mA^\top}$
\end{enumerate}
\end{lemma}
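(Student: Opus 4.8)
The plan is to derive each sufficient condition as an immediate consequence of the equivalent characterizations of exactness already established in Theorem~\ref{thm:exactness}, in particular characterization (iv): $\range{\mA}\cap\kernel{\Exp{\mH}}=\{0\}$.

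For condition (i), suppose $\Exp{\mH}\succ 0$. Then $\kernel{\Exp{\mH}}=\{0\}$, so trivially $\range{\mA}\cap\kernel{\Exp{\mH}}=\{0\}$, and Theorem~\ref{thm:exactness}(iv) gives exactness. For condition (ii), suppose $\kernel{\Exp{\mH}}\subseteq\kernel{\mA^\top}$. Take any $v\in\range{\mA}\cap\kernel{\Exp{\mH}}$; write $v=\mA x$ for some $x$, and note $v\in\kernel{\Exp{\mH}}\subseteq\kernel{\mA^\top}$, so $\mA^\top v=0$, hence $\mA^\top\mA x=0$, hence $\|\mA x\|^2=x^\top\mA^\top\mA x=0$, so $v=\mA x=0$. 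Thus $\range{\mA}\cap\kernel{\Exp{\mH}}=\{0\}$ and again Theorem~\ref{thm:exactness}(iv) applies.

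There is essentially no obstacle here; the lemma is a short corollary of Theorem~\ref{thm:exactness}. The only thing to be slightly careful about is that condition (ii) as stated involves $\kernel{\mA^\top}$ (the null space of the $n\times m$ matrix $\mA^\top$, a subspace of $\R^m$), which is the orthogonal complement of $\range{\mA}$; so condition (ii) literally says $\kernel{\Exp{\mH}}$ is orthogonal to $\range{\mA}$, from which the trivial-intersection statement is immediate. I would present the argument in that order: first invoke characterization (iv), then dispatch (i) and (ii) in one line each.

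Here is the proof.

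\begin{proof}
By Theorem~\ref{thm:exactness}, Assumption~\ref{ass:main} is equivalent to the condition $\range{\mA}\cap\kernel{\Exp{\mH}}=\{0\}$. It therefore suffices to check that each of (i) and (ii) implies this.

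If (i) holds, then $\Exp{\mH}\succ 0$ means $\kernel{\Exp{\mH}}=\{0\}$, so certainly $\range{\mA}\cap\kernel{\Exp{\mH}}=\{0\}$.

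If (ii) holds, let $v\in\range{\mA}\cap\kernel{\Exp{\mH}}$. Write $v=\mA x$ for some $x\in\R^n$. Since $v\in\kernel{\Exp{\mH}}\subseteq\kernel{\mA^\top}$, we have $\mA^\top v=0$, i.e.\ $\mA^\top\mA x=0$, and hence $\|\mA x\|^2 = x^\top\mA^\top\mA x = 0$. Thus $v=\mA x=0$, proving $\range{\mA}\cap\kernel{\Exp{\mH}}=\{0\}$.

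In either case Theorem~\ref{thm:exactness} yields $\cX=\cL$.
\end{proof}
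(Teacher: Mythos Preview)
Your proof is correct and follows essentially the same approach as the paper: both arguments reduce the lemma to Theorem~\ref{thm:exactness}. The only cosmetic difference is that for condition~(i) the paper invokes characterization~(ii) of Theorem~\ref{thm:exactness} (noting $\kernel{\Exp{\mZ}}=\kernel{\mA^\top\Exp{\mH}\mA}=\kernel{\mA}$ when $\Exp{\mH}\succ 0$), whereas you invoke characterization~(iv) for both parts; either route is equally short.
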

\begin{proof} If (i) holds, then $\kernel{\Exp{\mZ}} = \kernel{\mA^\top \Exp{\mH} \mA} = \kernel{\mA}$, we have exactness by applying Theorem~\ref{thm:exactness}.  Finally, (ii) implies statement (iv) in Theorem~\ref{thm:exactness}, and hence exactness follows.
\end{proof}

\section{Basic Method} \label{sec:Basic_Method}

We propose solving~\eqref{eq:probminf} by Algorithm~\ref{alg:alg1}.

\begin{algorithm}[!h]
\begin{algorithmic}[1]
\State \textbf{Parameters:} distribution $\cD$ from which to sample matrices; positive definite matrix $\mB\in \R^{n \times n}$; stepsize/relaxation parameter $\omega\in \R$
\State Choose $x_0 \in \R^n$
\Comment Initialization
\For {$k = 0, 1, 2, \dots$}
	\State Draw a fresh sample $\mS_k \sim \cD$
	\State Set $x_{k+1} = x_k - \omega \mB^{-1} \mA^\top  \mS_k (\mS_k^\top  \mA \mB^{-1} \mA^\top  \mS_k)^\dagger \mS_k^\top  (\mA x_k - b) $			
\EndFor
\end{algorithmic}
\caption{Basic Method}
\label{alg:alg1}
\end{algorithm}

\begin{remark}Since $\mS$ is random, the matrices $\mH=\mS(\mS^\top \mA \mB^{-1} \mA^\top \mS)^\dagger \mS^\top$ and $\mZ = \mA^\top \mH \mA$ are also random. At iteration $k$ of our algorithm, we  sample matrix $\mS_k\sim \cD$ and perform an update step. It will be useful to to use the notation $\mH_k=\mS_k(\mS_k^\top \mA \mB^{-1} \mA^\top \mS_k)^\dagger \mS_k^\top$ and $\mZ_k = \mA^\top \mH_k \mA$. In this notation, Algorithm~\ref{alg:alg1} can be written in the form $x_{k+1}=x_k - \omega \mB^{-1} \mA^\top \mH_k (\mA x_k -b)$.
\end{remark}

In the rest of this section we analyze Algorithm~\ref{alg:alg1}.

\subsection{Condition number of the stochastic reformulation}

Recall that the Hessian of $f$  is given by
\begin{equation}
\nabla^2 f  = \EE{\mS \sim \cD}{\nabla^2 f_\mS } = \mB^{-1} \Exp{\mZ}.
\end{equation}
Since $\mB^{-1} \Exp{\mZ}$ is not symmetric (although it is self-adjoint with respect to the $\mB$-inner product), it will be more convenient to instead study the spectral properties of the related matrix
$\mB^{-1/2}\Exp{\mZ}\mB^{-1/2}$. Note that this matrix is symmetric, and has the same spectrum as $\mB^{-1}\Exp{\mZ}$. Let 
\begin{equation}\label{eq:eig_decomp}\mW\eqdef \mB^{-1/2}\Exp{\mZ} \mB^{-1/2} = \mU \Lambda \mU^\top  = \sum_{i=1}^n \lambda_i u_i u_i^\top  
\end{equation} 
be the eigenvalue decomposition of $\mW$, where $\mU = [u_1,\dots,u_n]\in \R^{n\times n}$ is an orthonormal matrix composed of eigenvectors, and $\Lambda = \Diag{\lambda_1,\lambda_2,\dots,\lambda_n}$ is a diagonal matrix of eigenvalues.  Assume without loss of generality that the eigenvalues are ordered from largest to smallest: $ \lambda_1\geq \lambda_2 \geq \cdots \geq  \lambda_{n} \geq 0$. We shall often write $\lambda_{\max}=\lambda_1$ to denote the largest eigenvalue, and $\lambda_{\min} = \lambda_n$ for the smallest eigenvalue.


\begin{lemma}\label{lem:spectrumxx}  $0\leq \lambda_i\leq 1$ for all $i$.
\end{lemma}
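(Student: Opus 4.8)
The plan is to show the two eigenvalue bounds separately. The lower bound $\lambda_i \geq 0$ is essentially already in hand: we have $\mW = \mB^{-1/2}\Exp{\mZ}\mB^{-1/2}$, and since $\mZ = \mA^\top \mH \mA$ with $\mH$ symmetric positive semidefinite (being of the form $\mS(\cdot)^\dagger \mS^\top$ where the middle factor is a pseudoinverse of a symmetric PSD matrix, hence itself symmetric PSD), we get $\Exp{\mZ}\succeq 0$, and conjugating by the symmetric matrix $\mB^{-1/2}$ preserves positive semidefiniteness. So $\mW \succeq 0$, giving $\lambda_i \geq 0$.

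For the upper bound $\lambda_i \leq 1$, the key observation is that it suffices to prove $\lambda_1 = \|\mW\| \leq 1$, i.e., $\mW \preceq \mI$. By linearity of expectation and the fact that $\mW = \Exp{\mB^{-1/2}\mZ\mB^{-1/2}}$, it is enough to show that for each fixed $\mS$ (in the support of $\cD$), the matrix $\mB^{-1/2}\mZ\mB^{-1/2} \preceq \mI$. Now I would use the identity \eqref{eq:ZBZ}, which says $\mZ\mB^{-1}\mZ = \mZ$, equivalently $(\mB^{-1/2}\mZ\mB^{-1/2})^2 = \mB^{-1/2}\mZ\mB^{-1}\mZ\mB^{-1/2} = \mB^{-1/2}\mZ\mB^{-1/2}$. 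In other words, $\mB^{-1/2}\mZ\mB^{-1/2}$ is a symmetric idempotent matrix, i.e., an orthogonal projection matrix. Every orthogonal projection matrix $\mP$ satisfies $0 \preceq \mP \preceq \mI$ (its eigenvalues are $0$ or $1$). Hence $\mB^{-1/2}\mZ\mB^{-1/2}\preceq \mI$ pointwise in $\mS$, and taking expectations yields $\mW = \Exp{\mB^{-1/2}\mZ\mB^{-1/2}} \preceq \mI$. Combined with $\mW \succeq 0$, this gives $0 \leq \lambda_i \leq 1$ for all $i$.

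The only mild subtlety, and the step I would be most careful about, is the passage from the pointwise bound to the bound on the expectation: one should note that the entries of $\mB^{-1/2}\mZ\mB^{-1/2}$ are bounded (each such matrix is a projection, so has operator norm $1$, hence bounded entries), and Assumption~\ref{ass:H_finite} guarantees $\Exp{\mH}$ is finite, so $\Exp{\mZ} = \mA^\top\Exp{\mH}\mA$ is finite and $\mW$ is well-defined; then for any fixed $v \in \R^n$, $v^\top \mW v = \Exp{v^\top \mB^{-1/2}\mZ\mB^{-1/2}v} \in [0, \|v\|^2]$ since the integrand lies in $[0,\|v\|^2]$ almost surely. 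This is routine. So overall there is no real obstacle here — the content is the recognition via \eqref{eq:ZBZ} that the random matrix $\mB^{-1/2}\mZ\mB^{-1/2}$ is a projection, and the rest is monotonicity of expectation applied to the Loewner order.
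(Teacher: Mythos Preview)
Your proof is correct and follows essentially the same approach as the paper: both arguments hinge on recognizing that $\mB^{-1/2}\mZ\mB^{-1/2}$ is a symmetric idempotent (projection) matrix, hence has eigenvalues in $\{0,1\}$. The only cosmetic difference is in the final step: the paper invokes Jensen's inequality for the convex map $\mX\mapsto\lambda_{\max}(\mX)$ to get $\lambda_{\max}(\mW)\leq \Exp{\lambda_{\max}(\mB^{-1/2}\mZ\mB^{-1/2})}\leq 1$, whereas you use monotonicity of expectation in the Loewner order ($\mB^{-1/2}\mZ\mB^{-1/2}\preceq\mI$ a.s.\ $\Rightarrow$ $\mW\preceq\mI$); your route is arguably the more elementary of the two, but the content is the same.
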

\begin{proof} Since $\mB^{-1/2}\mZ \mB^{-1/2}$ is symmetric positive semidefinite, so is its expectation $\mW$, implying that $\lambda_i\geq 0$ for all $i$. Further, note that $\mB^{-1/2}\mZ \mB^{-1/2}$ is a projection matrix. Indeed, it is the projection (in the standard $\mI$-norm) onto $\range{\mB^{-1/2}\mA^\top \mS}$. Therefore, its eigenvalues are all zeros or ones. Since the map $\mX\mapsto \lambda_{\max}(\mX)$ is convex, by Jensen's inequality we get \[\lambda_{\max}(\mW) = \lambda_{\max}\left(\Exp{\mB^{-1/2}\mZ \mB^{-1/2}}\right) \leq \Exp{\lambda_{\max}(\mB^{-1/2}\mZ \mB^{-1/2})} \leq 1.\]
\end{proof}

It follows from Assumption~\ref{ass:main}  that $\lambda_{\max}>0$. Indeed, if we assume that  $\lambda_i = 0$ for all $i$, then from Theorem~\ref{thm:exactness} and the fact that $\kernel{\mW} = \range{u_i \;:\; \lambda_i=0}$ we conclude that $\kernel{\mA \mB^{-1/2}} = \R^n$, which in turn implies  that $\kernel{\mA} = \R^n$.  This can only happen if $\mA = 0$, which is a trivial case we excluded from consideration in this paper by assumption.

Now, let $j$ be the largest index for which $\lambda_j>0$.  We shall often write $\lambda_{\min}^+=\lambda_j$. If all eigenvalues $\{\lambda_i\}$ are positive, then $j=n$.

We now  define the {\em condition number} of problem \eqref{eq:min_f} to be the quantity
\begin{equation}\label{eq:kappa}\condnum \eqdef \|\mW\| \|\mW^\dagger\| = \frac{\lambda_{\max}}{\lambda_{\min}^+}.\end{equation}

\begin{lemma}[Quadratic bounds] \label{lem:sandwich} For all $x\in \R^n$ and $x_*\in \cL$ we have 
\begin{equation} \label{eq:sandwich}\lambda_{\min}^+ \cdot  f(x) \leq \frac{1}{2}\|\nabla f(x)\|_{\mB}^2 \leq \lambda_{\max} \cdot f(x),\end{equation}
\begin{equation} \label{eq:jshvs5r6989ss}f(x) \leq \frac{\lambda_{\max}}{2} \|x -x_*\|_{\mB}^2.
\end{equation}
Moreover, if Assumption~\ref{ass:main} holds, then  for all  $x\in \R^n$ and  $x_*=\Pi^{\mB}_{\cL}(x)$ we have
\begin{equation}\label{eq:jsguygusopoipoip}  \frac{\lambda_{\min}^+}{2} \|x -x_*\|_{\mB}^2 \leq f(x).\end{equation}
\end{lemma}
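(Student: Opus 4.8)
The plan is to reduce everything to the diagonal picture provided by the eigendecomposition $\mW = \mU\Lambda\mU^\top$ in \eqref{eq:eig_decomp}, using the change of variables $d \eqdef \mU^\top\mB^{1/2}(x-x_*)$ for a fixed $x_*\in\cL$. First I would record the two basic identities that translate the objects in the statement into this coordinate system: from \eqref{eq:probminf2} and \eqref{eq:grad_f}, $f(x) = \tfrac12 (x-x_*)^\top\Exp{\mZ}(x-x_*) = \tfrac12 d^\top\Lambda d = \tfrac12\sum_i \lambda_i d_i^2$, and $\|\nabla f(x)\|_{\mB}^2 = (x-x_*)^\top\Exp{\mZ}\mB^{-1}\Exp{\mZ}(x-x_*) = d^\top\Lambda^2 d = \sum_i \lambda_i^2 d_i^2$; also $\|x-x_*\|_{\mB}^2 = \|d\|^2 = \sum_i d_i^2$.

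With these in hand, \eqref{eq:sandwich} is immediate: on the support of the nonzero eigenvalues one has $\lambda_{\min}^+ \le \lambda_i \le \lambda_{\max}$, so $\lambda_{\min}^+\sum_i\lambda_i d_i^2 \le \sum_i\lambda_i^2 d_i^2 \le \lambda_{\max}\sum_i\lambda_i d_i^2$, which is exactly $2\lambda_{\min}^+ f(x) \le \|\nabla f(x)\|_{\mB}^2 \le 2\lambda_{\max} f(x)$ (the terms with $\lambda_i = 0$ contribute zero on all sides). The upper bound \eqref{eq:jshvs5r6989ss} is likewise immediate from $\sum_i\lambda_i d_i^2 \le \lambda_{\max}\sum_i d_i^2$ and holds for every $x_*\in\cL$ without needing exactness.

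The only place where Assumption~\ref{ass:main} is genuinely needed is the lower bound \eqref{eq:jsguygusopoipoip}, and this is the step I expect to require the most care. The naive bound $\sum_i\lambda_i d_i^2 \ge \lambda_{\min}^+\sum_i d_i^2$ fails because the coordinates $d_i$ with $\lambda_i = 0$ are unconstrained in general. The fix is the specific choice $x_* = \Pi^{\mB}_{\cL}(x)$: I would argue that then $d_i = 0$ whenever $\lambda_i = 0$. Indeed, $x - \Pi^{\mB}_{\cL}(x) \in \big(\kernel{\mA}\big)^{\perp_{\mB}}$ by the characterization of the $\mB$-projection, i.e. $\mB^{1/2}(x-x_*) \perp \mB^{1/2}\kernel{\mA}$ in the standard inner product; and under exactness Theorem~\ref{thm:exactness}(iii) gives $\kernel{\mW} = \kernel{\mA\mB^{-1/2}}$, so $\mB^{1/2}\kernel{\mA} = \kernel{\mW}^{\perp}$... more precisely $\mB^{-1/2}\kernel{\mW} = \kernel{\mA}$, hence $\mB^{1/2}(x-x_*)\perp\kernel{\mW} = \mathrm{span}\{u_i : \lambda_i = 0\}$, which says exactly $d_i = u_i^\top\mB^{1/2}(x-x_*) = 0$ for every $i$ with $\lambda_i = 0$. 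Once the zero-eigenvalue coordinates are killed, $2f(x) = \sum_{i:\lambda_i>0}\lambda_i d_i^2 \ge \lambda_{\min}^+\sum_{i:\lambda_i>0} d_i^2 = \lambda_{\min}^+\|d\|^2 = \lambda_{\min}^+\|x-x_*\|_{\mB}^2$, which is \eqref{eq:jsguygusopoipoip}. I would write the orthogonality argument carefully since it is the crux; everything else is bookkeeping with the diagonalization.
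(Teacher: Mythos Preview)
Your proposal is correct and follows essentially the same route as the paper: diagonalize via $d=\mU^\top\mB^{1/2}(x-x_*)$, read off $f(x)=\tfrac12\sum_i\lambda_i d_i^2$ and $\|\nabla f(x)\|_\mB^2=\sum_i\lambda_i^2 d_i^2$, and compare termwise. For the lower bound \eqref{eq:jsguygusopoipoip} the paper invokes a separate appendix lemma (Lemma~\ref{lem:WGWtight2}) stating that $y^\top\mW y\ge\lambda_{\min}^+\,y^\top y$ for $y\in\range{\mB^{-1/2}\mA^\top}$, together with the observation from \eqref{eq:project_linear} that $\mB^{1/2}(x-x_*)\in\range{\mB^{-1/2}\mA^\top}$; your inline argument that $d_i=0$ whenever $\lambda_i=0$ is exactly the content of that lemma (and of Lemma~\ref{lem:whath_appens_with_zero_eigenvalues} later in the paper), unpacked. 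One small cleanup: your line ``$\mB^{1/2}\kernel{\mA}=\kernel{\mW}^\perp$'' is a slip---the correct identity under exactness is $\mB^{1/2}\kernel{\mA}=\kernel{\mA\mB^{-1/2}}=\kernel{\mW}$, which is what you use in the next clause anyway.
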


\begin{proof} 
In view of \eqref{eq:probminf2} and \eqref{eq:eig_decomp}, we obtain a spectral characterization of $f$:
\begin{equation}\label{eq:f_spectral_decomp}f (x) =
\frac{1}{2} \sum_{i=1}^n \lambda_i \left(u_i^\top \mB^{1/2}(x -x_* )\right)^2,\end{equation}
where $x_*$ is any point in $\cL$. On the other hand,  in view of \eqref{eq:grad_f} and \eqref{eq:eig_decomp}, we have 
\begin{eqnarray}\label{eq:h98gs98g98s00033}\|\nabla f(x)\|_{\mB}^2 &=& \|\mB^{-1}\Exp{\mZ}(x-x_*)\|_{\mB}^2 = (x-x_*)^\top \Exp{\mZ} \mB^{-1} \Exp{\mZ} (x-x_*)\notag \\
&=&(x-x_*)^\top \mB^{1/2} ( \mB^{-1/2}\Exp{\mZ} \mB^{-1/2}) (\mB^{-1/2} \Exp{\mZ} \mB^{-1/2}) \mB^{1/2} (x-x_*)\notag \\
&=&(x-x_*)^\top \mB^{1/2} \mU (\mU^\top \mB^{-1/2}\Exp{\mZ} \mB^{-1/2} \mU) (\mU^\top \mB^{-1/2} \Exp{\mZ} \mB^{-1/2} \mU) \notag \\ 
&& \quad \mU^\top \mB^{1/2} (x-x_*)\notag \\
&\overset{\eqref{eq:eig_decomp}}{=}& (x-x_*)^\top \mB^{1/2} \mU \Lambda^2 \mU^\top \mB^{1/2} (x-x_*)\notag \\
&=& \sum_{i=1}^n \lambda_i^2  \left(u_i^\top \mB^{1/2}(x -x_* )\right)^2.
\end{eqnarray}
Inequality \eqref{eq:sandwich} follows by comparing \eqref{eq:f_spectral_decomp} and \eqref{eq:h98gs98g98s00033}, using the bounds $\lambda_{\min}^+ \lambda_i \leq  \lambda_i^2 \leq \lambda_{\max} \lambda_i$, which hold for $i$ for which $\lambda_i>0$. 

We now move to the bounds involving norms. First, note that for any $x_*\in \cL$, 
\begin{equation} \label{eq:hhgsfsrppls} f(x)\overset{\eqref{eq:probminf2}}{=} 
 \frac{1}{2}(\mB^{1/2}(x-x_*))^\top (\mB^{-1/2}\Exp{\mZ}\mB^{-1/2}) \mB^{1/2} (x-x_*).
\end{equation}
The upper bound follows by applying the inequality $\mB^{-1/2}\Exp{\mZ}\mB^{-1/2} \preceq \lambda_{\max}\mI$. If $x_* = \Pi^{\mB}_{\cL}(x)$, then
in view of \eqref{eq:project_linear}, we  have $\mB^{1/2}(x-x_*) \in \range{\mB^{-1/2}\mA^\top}$. Applying Lemma~\ref{lem:WGWtight2} to \eqref{eq:hhgsfsrppls}, we get the lower bound.
\end{proof}

\begin{remark}
Bounds such as those  in Lemma~\ref{lem:sandwich} are often seen in convex optimization. In particular, if $\phi:\R^n\to \R$ is a $\mu$-strongly convex and $L$-smooth function, then 
$\mu(\phi(x)-\phi^*) \leq \frac{1}{2}\|\nabla \phi(x)\|^2 \leq L (\phi(x)-\phi^*)$ for all $x\in \R^n$, where $\phi^* = \min_{x} \phi(x)$. In our case, the optimal objective value is zero. The presence of $\mB$-norm is due to us defining gradients using the $\mB$-inner product. Moreover, it is the case that $f$ is $\lambda_{\max}$-smooth, which explains the upper bound. However, $f$ is not necessarily $\mu$-strongly convex for any $\mu>0$, since $\Exp{\mZ}$ is not necessarily positive definite. However, we still obtain a nontrivial lower bound. 
\end{remark}

\subsection{Convergence of expected iterates}

We now present a  fundamental theorem  precisely describing the evolution of the expected iterates of the basic method. 

\begin{theorem}[Convergence of expected iterates] \label{thm:alg1_complexity_first} Choose any $x_0\in \R^n$ and let
$\{x_k\}$ be the random iterates produced by Algorithm~\ref{alg:alg1}.
\begin{enumerate}
\item Let  $x_*\in \cL$ be chosen arbitrarily.  Then 
\begin{equation}\label{eq:Eiterates}
\Exp{x_{k+1}- x_* } =\left( \mI - \omega\mB^{-1} \Exp{\mZ}\right) \Exp{x_k- x_* }.\end{equation}
Moreover, by transforming the error via the linear mapping $h \to \mU^\top \mB^{1/2}h$, this can be written in the form
\begin{equation}\label{eq:9898gifkdmd}\Exp{\mU^\top  \mB^{1/2}(x_k-x_*)} = (\mI -  \omega  \Lambda )^k \mU^\top  \mB^{1/2}(x_0-x_*),
\end{equation} 
which is separable in the coordinates of the transformed error:
\begin{equation}\label{eq:98g9dgisss}\Exp{u_i^\top  \mB^{1/2}(x_k-x_*)} = (1 -  \omega  \lambda_i )^k u_i^\top  \mB^{1/2}(x_0-x_*), \quad i=1,2,\dots,n.\end{equation}
Finally,
\begin{equation}\label{eq:h98g98ishihfp}\|\Exp{x_k-x_*}\|_{\mB}^2 = \sum_{i=1}^n (1-\omega \lambda_i)^{2k} \left(u_i^\top  \mB^{1/2}(x_0-x_*)\right)^2 .\end{equation}

\item Assumption~\ref{ass:main} hold and let $x_* = \Pi^{\mB}_{\cL}(x_0)$. Then  for all $i=1,2,\dots,n$,
\begin{equation}\label{eq:s98g98gjshis}\Exp{u_i^\top  \mB^{1/2}(x_k-x_*)} = \begin{cases}
0 &\text{if} \quad \lambda_i=0,\\
(1 -  \omega  \lambda_i )^k u_i^\top  \mB^{1/2}(x_0-x_*) & \text{if} \quad \lambda_i>0.
\end{cases} \end{equation}
Moreover,
\begin{equation}\label{eq:ug98fg98bdd}\|\Exp{x_k-x_*}\|_{\mB}^2 \leq \rho^k(\omega) \|x_0-x_*\|_{\mB}^2,\end{equation}
where the rate is given by \begin{equation}\label{eq:rho}
\rho(\omega) \eqdef \max_{i: \lambda_i>0} (1-\omega \lambda_i)^2.
\end{equation}
\end{enumerate}
\end{theorem}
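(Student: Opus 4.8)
The plan is to unroll the recursion for the expected iterates, diagonalize it in the $\mB^{1/2}$-weighted orthonormal basis given by $\mU$, and then treat the zero eigenvalues separately using exactness.

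\textbf{Part 1.} First I would rewrite a single step of Algorithm~\ref{alg:alg1} in ``error form''. Fix any $x_*\in\cL$; since $\mA x_*=b$ we have $\mA x_k-b=\mA(x_k-x_*)$, so by \eqref{eq:Z} the update reads $x_{k+1}-x_*=(\mI-\omega\mB^{-1}\mZ_k)(x_k-x_*)$, where $\mZ_k=\mA^\top\mH_k\mA$. Because $\mS_k\sim\cD$ is drawn fresh, independently of $x_k$, taking the conditional expectation over $\mS_k$ (using $\Exp{\mZ_k}=\Exp{\mZ}$) and then the tower property yields \eqref{eq:Eiterates}, and iterating gives $\Exp{x_k-x_*}=(\mI-\omega\mB^{-1}\Exp{\mZ})^k(x_0-x_*)$. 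Next I would pass to the spectral picture: since $\mB^{-1}\Exp{\mZ}=\mB^{-1/2}\mW\mB^{1/2}$ and $\mW=\mU\Lambda\mU^\top$ by \eqref{eq:eig_decomp}, we get $\mI-\omega\mB^{-1}\Exp{\mZ}=\mB^{-1/2}\mU(\mI-\omega\Lambda)\mU^\top\mB^{1/2}$, hence $(\mI-\omega\mB^{-1}\Exp{\mZ})^k=\mB^{-1/2}\mU(\mI-\omega\Lambda)^k\mU^\top\mB^{1/2}$ (telescoping with $\mB^{1/2}\mB^{-1/2}=\mI$ and $\mU^\top\mU=\mI$). Left-multiplying the unrolled recursion by $\mU^\top\mB^{1/2}$ and using $\mU^\top\mB^{1/2}\mB^{-1/2}\mU=\mI$ gives \eqref{eq:9898gifkdmd}; reading off coordinate $i$, where $(\mI-\omega\Lambda)^k$ is diagonal with entries $(1-\omega\lambda_i)^k$, gives \eqref{eq:98g9dgisss}; and taking squared Euclidean norms, using that $\mU$ is orthogonal so $\|v\|_{\mB}=\|\mB^{1/2}v\|_2=\|\mU^\top\mB^{1/2}v\|_2$, gives \eqref{eq:h98g98ishihfp}.

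\textbf{Part 2.} Here the one genuinely new ingredient is that, for $x_*=\Pi^{\mB}_{\cL}(x_0)$ and under exactness, the component of the initial error along any eigenvector $u_i$ with $\lambda_i=0$ vanishes. By \eqref{eq:project_linear}, $\mB^{1/2}(x_0-x_*)=\mB^{-1/2}\mA^\top(\mA\mB^{-1}\mA^\top)^\dagger(\mA x_0-b)\in\range{\mB^{-1/2}\mA^\top}$. On the other hand, if $\lambda_i=0$ then $u_i\in\kernel{\mW}=\kernel{\mB^{-1/2}\Exp{\mZ}\mB^{-1/2}}$, which by the exactness characterization in Theorem~\ref{thm:exactness}(iii) equals $\kernel{\mA\mB^{-1/2}}=\range{\mB^{-1/2}\mA^\top}^\perp$. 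Hence $u_i^\top\mB^{1/2}(x_0-x_*)=0$, and \eqref{eq:s98g98gjshis} follows directly from \eqref{eq:98g9dgisss} (the $\lambda_i=0$ case giving the constant $u_i^\top\mB^{1/2}(x_0-x_*)=0$). Plugging this into \eqref{eq:h98g98ishihfp}, the sum runs effectively over $\{i:\lambda_i>0\}$, and since $(1-\omega\lambda_i)^{2k}\le\rho^k(\omega)$ there, $\|\Exp{x_k-x_*}\|_{\mB}^2\le\rho^k(\omega)\sum_{i:\lambda_i>0}(u_i^\top\mB^{1/2}(x_0-x_*))^2\le\rho^k(\omega)\|\mU^\top\mB^{1/2}(x_0-x_*)\|_2^2=\rho^k(\omega)\|x_0-x_*\|_{\mB}^2$, which is \eqref{eq:ug98fg98bdd} with $\rho$ as in \eqref{eq:rho}.

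The only non-mechanical point I expect is the orthogonality argument in Part 2: correctly observing that $\mB^{1/2}(x_0-x_*)$ lies in $\range{\mB^{-1/2}\mA^\top}$ and that exactness forces the null eigenvectors of $\mW$ into the orthogonal complement $\kernel{\mA\mB^{-1/2}}$; everything else is bookkeeping with the eigendecomposition and the tower property. A minor technical check, needed so that \eqref{eq:Eiterates} even makes sense, is that $\Exp{\mZ}$ is finite, which holds by the standing finite-mean Assumption~\ref{ass:H_finite} since $\mZ=\mA^\top\mH\mA$.
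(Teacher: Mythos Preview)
Your proof is correct and follows essentially the same approach as the paper: write the iteration in error form, take conditional expectations and diagonalize via $\mW=\mU\Lambda\mU^\top$, and for Part~2 use exactness (Theorem~\ref{thm:exactness}(iii)) together with $\mB^{1/2}(x_0-x_*)\in\range{\mB^{-1/2}\mA^\top}$ to kill the zero-eigenvalue components. The paper separates the orthogonality observation into a standalone lemma (Lemma~\ref{lem:whath_appens_with_zero_eigenvalues}), but the argument is identical to yours.
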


Note that all eigenvalues of $\mW$ play a role, governing the convergence speeds of individual elements of the transformed error vector. Under the assumption of exactness,  and relative to the particular solution $x_*=\Pi^{\mB}_{\cL}(x_0)$, the expected error $\Exp{x_k-x_*}$ converges to zero at a linear rate.  The proof of the theorem is provided in Section~\ref{sec:proof_xxx}.

\begin{remark}Having established \eqref{eq:Eiterates}, perhaps the the most obvious way of analyzing the method is by taking $\mB$-norms on both sides of identity \eqref{eq:Eiterates}. This way we obtain the estimate  
$\|\Exp{x_{k+1}-x_*}\|_{\mB}^2\leq \tilde{\rho}(\omega) \|\Exp{x_{k}-x_*}\|_{\mB}^2,$
where
$\tilde{\rho}(\omega) = \norm{\mI - \omega\mB^{-1} \Exp{\mZ}}_{\mB}^2
= \sigma^2_{\max} (\mI - \omega\mB^{-1/2} \Exp{\mZ}\mB^{-1/2} ),$
$ \norm{\mM}_{\mB}
\eqdef \max \{\norm{{\mM} x}_{\mB} \;:\; \norm{x}_{\mB} \leq 1\}$, and $\sigma_{\max} (\cdot)$ denotes the largest singular value. This gives the inequality
\begin{equation}\label{eq:alternative_xx}\|\Exp{x_{k}-x_*}\|_{\mB}^2 \leq \tilde{\rho}^{k}(\omega) \|x_{0}-x_*\|_{\mB}^2,\end{equation}
which can be directly compared with \eqref{eq:h98g98ishihfp}. We now highlight two differences between these two bounds. The first approach gives a more  detailed, information, as the identity in \eqref{eq:h98g98ishihfp} is an exact formula for the norm of the expected error. Moreover, while in view of  \eqref{eq:rho}, we have $\rho(\omega) =  \max_{i: \lambda_i>0} (1-\omega \lambda_i)^2$, it can be shown that $\tilde{\rho}(\omega) = \max_{i} (1-\omega \lambda_i)^2$. The two bounds are identical if $\lambda_{\min}>0$, but they differ otherwise. In particular, as long as $\lambda_{\min}=0$, we have $\tilde{\rho}(\omega)\geq 1$ for all $\omega$, which means that the bound \eqref{eq:alternative_xx} does not guarantee convergence.

\end{remark}

The following result, characterizing convergence of the expected errors to zero, is a straightforward corollary of Theorem~\ref{thm:alg1_complexity_first}.

\begin{corollary}[Necessary and sufficient conditions for convergence]\label{thm:corollary}
Let Assumption~\ref{ass:main} hold. Choose any $x_0\in \R^n$ and let $x_*=\Pi^{\mB}_{\cL}(x_0)$. If $\{x_k\}$ are the random iterates produced by Algorithm~\ref{alg:alg1}, then the following statements are equivalent:
\begin{enumerate}
\item[(i)] $|1-\omega \lambda_i|<1$ for all $i$ for which $\lambda_i>0$
\item[(ii)] $0< \omega< 2/ \lambda_{\max}$
\item[(iii)] 
$\Exp{u_i^\top  \mB^{1/2}(x_k-x_*)}\to 0$ for all $i$
\item[(iv)] $\|\Exp{x_k-x_*}\|_{\mB}^2 \to 0$
\end{enumerate}
\end{corollary}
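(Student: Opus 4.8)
The plan is to derive this corollary directly from the exact formula \eqref{eq:s98g98gjshis} (equivalently \eqref{eq:h98g98ishihfp}) established in Theorem~\ref{thm:alg1_complexity_first}, since under Assumption~\ref{ass:main} with $x_*=\Pi^{\mB}_{\cL}(x_0)$ all the work of handling the zero eigenvalues has already been done there. The scheme is to prove the cycle of implications $(i)\Leftrightarrow(ii)$, $(i)\Rightarrow(iv)$, $(iv)\Rightarrow(iii)$, and $(iii)\Rightarrow(i)$.

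First I would dispatch $(i)\Leftrightarrow(ii)$ by an elementary scalar argument: for $\lambda>0$, the condition $|1-\omega\lambda|<1$ is equivalent to $0<\omega\lambda<2$, i.e.\ $0<\omega<2/\lambda$; intersecting over all active eigenvalues $\lambda_i$ (and using that the binding constraint is the one with the largest $\lambda_i$, namely $\lambda_{\max}$, which is positive by the remark following Lemma~\ref{lem:spectrumxx}) gives exactly $0<\omega<2/\lambda_{\max}$. Note the lower bound $\omega>0$ is forced because $|1-\omega\lambda_{\max}|<1$ rules out $\omega\le 0$.

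Next, for $(i)\Rightarrow(iv)$: assuming $(i)$, set $\rho(\omega)=\max_{i:\lambda_i>0}(1-\omega\lambda_i)^2<1$, and invoke the bound \eqref{eq:ug98fg98bdd}, $\|\Exp{x_k-x_*}\|_\mB^2\le \rho^k(\omega)\|x_0-x_*\|_\mB^2\to 0$. For $(iv)\Rightarrow(iii)$: since by \eqref{eq:h98g98ishihfp} each term $(1-\omega\lambda_i)^{2k}(u_i^\top\mB^{1/2}(x_0-x_*))^2$ is nonnegative and they sum to $\|\Exp{x_k-x_*}\|_\mB^2$, convergence of the sum to zero forces each term, hence each $\Exp{u_i^\top\mB^{1/2}(x_k-x_*)}=(1-\omega\lambda_i)^k u_i^\top\mB^{1/2}(x_0-x_*)$ (for $\lambda_i>0$; it is identically $0$ for $\lambda_i=0$ by \eqref{eq:s98g98gjshis}), to zero. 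Finally $(iii)\Rightarrow(i)$ by contraposition: if $|1-\omega\lambda_i|\ge 1$ for some $i$ with $\lambda_i>0$, I would choose a starting point $x_0$ with $u_i^\top\mB^{1/2}(x_0-x_*)\ne 0$ — e.g.\ $x_0 = x_* + \mB^{-1/2}u_i$, noting that $x_*=\Pi^\mB_\cL(x_0)$ then holds because $\mB^{1/2}(x_0-x_*)=u_i$ lies in $\range{\mB^{-1/2}\mA^\top}$ as $\lambda_i>0$ means $u_i\in\range{\mW}=\range{\mB^{-1/2}\mA^\top}$ — so that $\Exp{u_i^\top\mB^{1/2}(x_k-x_*)}=(1-\omega\lambda_i)^k$ does not tend to $0$, contradicting $(iii)$.

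The only mildly delicate point — the one I would treat as the ``main obstacle,'' though it is minor — is the contrapositive step $(iii)\Rightarrow(i)$: one must produce a legitimate instance (a starting point) witnessing non-convergence, and to stay consistent with the hypotheses of the corollary one needs $x_*=\Pi^\mB_\cL(x_0)$ for that chosen $x_0$. This is handled by the observation above that $u_i$ with $\lambda_i>0$ lies in the range of $\mB^{-1/2}\mA^\top$ (the range of $\mW$), so that $x_0=x_*+\mB^{-1/2}u_i$ has its $\mB$-projection onto $\cL$ equal to $x_*$ by the characterization of $\Pi^\mB_\cL$ in \eqref{eq:project_linear}. Everything else is a routine unwinding of the exact formulas from Theorem~\ref{thm:alg1_complexity_first}.
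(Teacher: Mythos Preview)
Your proposal is correct and is exactly what the paper has in mind: the paper gives no proof beyond declaring the result ``a straightforward corollary of Theorem~\ref{thm:alg1_complexity_first}.'' Your chain of implications, all read off directly from the identities \eqref{eq:98g9dgisss}, \eqref{eq:h98g98ishihfp}, \eqref{eq:s98g98gjshis} and the bound \eqref{eq:ug98fg98bdd}, is the natural way to unpack that claim, and your witness construction for the contrapositive $(iii)\Rightarrow(i)$ via $u_i\in\range{\mW}=\range{\mB^{-1/2}\mA^\top}$ under exactness is handled correctly.
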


\subsection{Proof of Theorem~\ref{thm:alg1_complexity_first}}
\label{sec:proof_xxx}

We first start with a lemma.

\begin{lemma} \label{lem:whath_appens_with_zero_eigenvalues}Let Assumption~\ref{ass:main} hold. Consider arbitrary $x\in \R^n$ and let  $x_* = \Pi^{\mB}_{\cL}(x)$. If  $\lambda_i=0$, then $u_i^\top  \mB^{1/2}(x-x_*)=0$.
\end{lemma}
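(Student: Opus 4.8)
The plan is to exploit the two facts that characterize $x_*=\Pi^{\mB}_{\cL}(x)$: first, that $x_*\in\cL$, so $x-x_*\in\mB^{-1}\mA^\top\cdot(\text{something})$, i.e. $\mB^{1/2}(x-x_*)\in\range{\mB^{-1/2}\mA^\top}$ by the projection formula \eqref{eq:project_linear}; and second, that $u_i$ with $\lambda_i=0$ spans $\kernel{\mW}$. So I would first observe that $\lambda_i=0$ means $u_i\in\kernel{\mW}=\kernel{\mB^{-1/2}\Exp{\mZ}\mB^{-1/2}}$. Under exactness (Assumption~\ref{ass:main}), Theorem~\ref{thm:exactness}(iii) gives $\kernel{\mB^{-1/2}\Exp{\mZ}\mB^{-1/2}}=\kernel{\mA\mB^{-1/2}}$, hence $\mA\mB^{-1/2}u_i=0$, i.e. $u_i\in\kernel{\mA\mB^{-1/2}}$.

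Next I would bring in the projection. By \eqref{eq:project_linear}, $x_* = \Pi^{\mB}_{\cL}(x) = x - \mB^{-1}\mA^\top(\mA\mB^{-1}\mA^\top)^\dagger(\mA x - b)$, so $x - x_* = \mB^{-1}\mA^\top w$ for $w=(\mA\mB^{-1}\mA^\top)^\dagger(\mA x - b)$, and therefore $\mB^{1/2}(x-x_*) = \mB^{-1/2}\mA^\top w \in \range{\mB^{-1/2}\mA^\top}$. Now the key linear-algebra fact is that $\range{\mB^{-1/2}\mA^\top}$ is precisely the orthogonal complement (in the standard inner product) of $\kernel{\mA\mB^{-1/2}}$, since $(\mB^{-1/2}\mA^\top)^\top = \mA\mB^{-1/2}$. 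Thus $\mB^{1/2}(x-x_*)\perp u_i$, which is exactly the claim $u_i^\top\mB^{1/2}(x-x_*)=0$.

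To write this cleanly I would compute directly: $u_i^\top\mB^{1/2}(x-x_*) = u_i^\top\mB^{-1/2}\mA^\top w = (\mA\mB^{-1/2}u_i)^\top w = 0$, using $\mA\mB^{-1/2}u_i=0$ from the first step. This is short enough that no separate appeal to the orthogonal-complement identity is even needed.

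I do not anticipate a serious obstacle here; the only subtlety is making sure the chain $\kernel{\mW}\to\kernel{\mB^{-1/2}\Exp{\mZ}\mB^{-1/2}}\to\kernel{\mA\mB^{-1/2}}$ is justified — the first equality is because $u_i$ is an eigenvector of $\mW$ with eigenvalue $0$ and $\mW$ is symmetric (so its kernel is spanned by the zero-eigenvalue eigenvectors, as already noted in the text right after Lemma~\ref{lem:spectrumxx}), and the second is Theorem~\ref{thm:exactness}(iii), which requires exactness. Everything else is a one-line computation once the projection formula \eqref{eq:project_linear} is substituted.
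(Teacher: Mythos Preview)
Your proposal is correct and follows essentially the same route as the paper: use the projection formula \eqref{eq:project_linear} to write $x-x_*=\mB^{-1}\mA^\top w$, invoke Theorem~\ref{thm:exactness} (under exactness) to conclude $u_i\in\kernel{\mA\mB^{-1/2}}$ whenever $\lambda_i=0$, and then compute $u_i^\top\mB^{1/2}(x-x_*)=(\mA\mB^{-1/2}u_i)^\top w=0$. The paper's proof is precisely this three-line argument.
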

\begin{proof} From \eqref{eq:project_linear} we see that $x-x_* = \mB^{-1} \mA^\top  w$ for some $w\in \R^m$. Therefore, $u_i^\top  \mB^{1/2}(x-x_*) = u_i^\top  \mB^{-1/2} \mA^\top  w$. By Theorem~\ref{thm:exactness}, we have
$\range{u_i \;:\; \lambda_i=0} = \kernel{\mA \mB^{-1/2}}$,
from which it follows that $u_i^\top  \mB^{-1/2} \mA^\top = 0$.
\end{proof}

We now proceed with the proof of Theorem~\ref{thm:alg1_complexity_first}. The iteration of Algorithm \ref{alg:alg1} can be written in the form \begin{equation}\label{eq:9898gsgusss}e_{k+1} = (\mI-\omega \mB^{-1}\mZ_k)e_k,\end{equation}
where $e_k=x_{k}-x_*$.   Multiplying both sides of this equation by $\mB^{1/2}$  from the left, and taking expectation conditional on $e_k$, we obtain
\[\Exp{\mB^{1/2}e_{k+1} \;|\; e_k} = (\mI - \omega \mB^{-1/2}\Exp{\mZ}\mB^{-1/2}) \mB^{1/2}e_k.\]
Taking expectations on both sides and using the tower property, we get 
\[\Exp{\mB^{1/2}e_{k+1} } = \Exp{\Exp{\mB^{1/2}e_{k+1} \;|\; e_k}} = (\mI - \omega \mB^{-1/2}\Exp{\mZ}\mB^{-1/2}) \Exp{\mB^{1/2}e_k}.\]
We now replace $\mB^{-1/2}\Exp{\mZ}\mB^{-1/2}$ by its eigenvalue decomposition $\mU\Lambda \mU^\top $ (see \eqref{eq:eig_decomp}), multiply both sides of the last inequality by $\mU^\top $ from the left, and use linearity of expectation to obtain
\[\Exp{\mU^\top  \mB^{1/2}e_{k+1}} = (\mI - \omega  \Lambda ) \Exp{\mU^\top  \mB^{1/2}e_k}.\]
Unrolling the recurrence, we get \eqref{eq:9898gifkdmd}. When this is written coordinate-by-coordinate, \eqref{eq:98g9dgisss} follows. Identity \eqref{eq:h98g98ishihfp} follows immediately by equating standard Euclidean norms of both sides of  \eqref{eq:9898gifkdmd}. If $x_*=\Pi^{\mB}_{\cL}(x_0)$, then from Lemma~\ref{lem:whath_appens_with_zero_eigenvalues} we see that $\lambda_i=0$ implies $u_i^\top  \mB^{1/2}(x_0-x_*) = 0$. Using this in \eqref{eq:98g9dgisss} gives \eqref{eq:s98g98gjshis}. Finally, inequality \eqref{eq:ug98fg98bdd} follows from
\begin{eqnarray*} 
\|\Exp{x_k-x_*}\|_{\mB}^2 &\overset{\eqref{eq:h98g98ishihfp}}{=}& \sum_{i=1}^n (1-\omega \lambda_i)^{2k} \left(u_i^\top  \mB^{1/2}(x_0-x_*)\right)^2 \\
& =& \sum_{i: \lambda_i>0} (1-\omega \lambda_i)^{2k} \left( u_i^\top  \mB^{1/2}(x_0-x_*)\right)^2   \\
&\overset{\eqref{eq:rho}}{\leq} & \rho^k(\omega)\sum_{i: \lambda_i>0}  \left( u_i^\top  \mB^{1/2}(x_0-x_*)\right)^2  \\
&=& \rho^k(\omega)\sum_{i: \lambda_i>0}  \left( u_i^\top  \mB^{1/2}(x_0-x_*)\right)^2 + \rho^k(\omega) \sum_{i: \lambda_i=0}  \left( u_i^\top  \mB^{1/2}(x_0-x_*)\right)^2 \\
&=&\rho^k(\omega)\sum_{i}  \left(u_i^\top  \mB^{1/2}(x_0-x_*)\right)^2 \\
&=& \rho^k(\omega)\sum_{i}  (x_0-x_*)^\top  \mB^{1/2}u_i  u_i^\top  \mB^{1/2}(x_0-x_*)  \\
&=&\rho^k(\omega) (x_0-x_*)^\top  \mB^{1/2}\left(\sum_i u_i u_i^\top \right)\mB^{1/2}(x_0-x_*)\\
&= & \rho^k(\omega) \|x_0-x_*\|_{\mB}^2.
\end{eqnarray*}
The last identity follows from the fact that $\sum_i u_i u_i^\top = \mU \mU^\top = \mI$.

\subsection{Choice of the stepsize  parameter}
\label{subsec: omega}

We now consider the problem of choosing the stepsize (relaxation) parameter $\omega$. In view of \eqref{eq:ug98fg98bdd} and \eqref{eq:rho}, the optimal relaxation parameter is the one solving the following optimization problem:
\begin{equation} \label{eq:omegamin}
\min_{\omega\in \R} \left\{\rho(\omega) = \max_{i: \lambda_i>0} (1-\omega \lambda_i)^2\right\}.
\end{equation}
In the next result we solve the above problem. 

\begin{theorem}[Stepsize]\label{thm:alg1_bestomega}The objective of~\eqref{eq:omegamin} is given by
\begin{equation}\label{eq:rho-cases}
\rho(\omega) =
\begin{cases}
(1 - \omega\lambda_{\max})^2 & \text{if} \quad \omega \leq 0 \\
(1 - \omega\lambda_{\min}^+)^2 & \text{if} \quad 0 \leq \omega \leq \omega^*\\
(1-\omega\lambda_{\max})^2 & \text{if} \quad \omega \geq \omega^*
\end{cases},
\end{equation}
where $\omega^* \eqdef 2/(\lambda_{\min}^+ + \lambda_{\max} )$. Moreover, $\rho$ is decreasing on $(-\infty, \omega^* ]$ and increasing on
$[\omega^* , +\infty)$, and hence the optimal solution of~\eqref{eq:omegamin} is $\omega^*$. Further, we have:
\begin{enumerate}
\item[(i)] If we choose $\omega = 1$ (no over-relaxation), then
\begin{equation}\label{eq:rho(1)} \rho(1) = (1 - \lambda_{\min}^+)^2. \end{equation}
\item[(ii)] If we choose $\omega = 1/\lambda_{\max}$ (over-relaxation), then
\begin{equation}\label{eq:rho2}\rho(1/\lambda_{\max} ) = \left(1 -\frac{\lambda_{\min}^+}{ \lambda_{\max}}\right)^2  \overset{\eqref{eq:kappa}}{=} \left(1  - \frac{1}{\condnum}\right)^2.
\end{equation}
\item[(iii)]  If we choose $\omega = \omega^*$ (optimal over-relaxation), then the optimal rate is
\begin{equation}\label{eq:rho_optimal}\rho(\omega^* ) = \left(1 -\frac{2\lambda_{\min}^+}{\lambda_{\min}^+ + \lambda_{\max}}\right)^2 = \left(1 - \frac{2}{\condnum + 1}\right)^2.\end{equation}
\end{enumerate}
\end{theorem}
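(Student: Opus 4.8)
The plan is to treat the optimization problem \eqref{eq:omegamin} as a pointwise maximum of finitely many parabolas $g_i(\omega) = (1-\omega\lambda_i)^2$, one for each index $i$ with $\lambda_i > 0$, and exploit the fact that each such parabola is a simple function of the product $\omega\lambda_i$. First I would observe that for fixed $\omega$, the map $t \mapsto (1-\omega t)^2$ on the interval $t \in [\lambda_{\min}^+, \lambda_{\max}]$ is a convex parabola in $t$, so its maximum over that interval is attained at one of the two endpoints. Hence $\rho(\omega) = \max\{(1-\omega\lambda_{\min}^+)^2, (1-\omega\lambda_{\max})^2\}$ for every $\omega$, which already reduces the problem to comparing just two parabolas.

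Next I would determine, as a function of $\omega$, which of the two endpoint parabolas dominates. Setting $(1-\omega\lambda_{\min}^+)^2 = (1-\omega\lambda_{\max})^2$ and using $\lambda_{\min}^+ \le \lambda_{\max}$, the nontrivial crossing occurs where $1 - \omega\lambda_{\min}^+ = -(1-\omega\lambda_{\max})$, i.e. at $\omega = \omega^* = 2/(\lambda_{\min}^+ + \lambda_{\max})$ (the other algebraic solution $\omega = 0$ is where both equal $1$, but is a tangency rather than a genuine sign change when $\lambda_{\min}^+ < \lambda_{\max}$). A short sign analysis of $1-\omega\lambda_{\min}^+$ and $1-\omega\lambda_{\max}$ on the three regions $\omega \le 0$, $0 \le \omega \le \omega^*$, $\omega \ge \omega^*$ then pins down which term is the max on each region, yielding the case formula \eqref{eq:rho-cases}. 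For monotonicity, on $(-\infty,\omega^*]$ we have $\rho(\omega) = (1-\omega\lambda_{\min}^+)^2$ with $1-\omega\lambda_{\min}^+ \ge 1-\omega^*\lambda_{\min}^+ = (\lambda_{\max}-\lambda_{\min}^+)/(\lambda_{\max}+\lambda_{\min}^+) \ge 0$, so this branch is decreasing; symmetrically, on $[\omega^*,\infty)$ we have $\rho(\omega) = (1-\omega\lambda_{\max})^2$ with $1-\omega\lambda_{\max} \le 0$ and decreasing, so the square is increasing. Therefore $\omega^*$ is the unique minimizer.

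Finally, parts (i)--(iii) are direct substitutions. For (i), $\omega = 1$: I would check $1 \le \omega^*$, equivalently $\lambda_{\min}^+ + \lambda_{\max} \le 2$, which holds since both eigenvalues are at most $1$ by Lemma~\ref{lem:spectrumxx}; then \eqref{eq:rho-cases} gives $\rho(1) = (1-\lambda_{\min}^+)^2$. For (ii), $\omega = 1/\lambda_{\max}$: again $1/\lambda_{\max} \le \omega^*$ since $\lambda_{\min}^+ + \lambda_{\max} \le 2\lambda_{\max}$, so $\rho(1/\lambda_{\max}) = (1-\lambda_{\min}^+/\lambda_{\max})^2 = (1-1/\condnum)^2$. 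For (iii), plug $\omega^*$ into either branch: $1 - \omega^*\lambda_{\min}^+ = (\lambda_{\max}-\lambda_{\min}^+)/(\lambda_{\max}+\lambda_{\min}^+)$, and dividing numerator and denominator by $\lambda_{\min}^+$ gives $(\condnum-1)/(\condnum+1) = 1 - 2/(\condnum+1)$, matching \eqref{eq:rho_optimal}.

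I do not anticipate a serious obstacle here; the only point requiring care is the bookkeeping around $\omega = 0$ versus $\omega = \omega^*$ as roots of the endpoint-equality equation — one must be explicit that $\omega = 0$ is a tangency of the two parabolas (both equal $1$) and not the threshold where the dominant branch switches, so that the three-way case split in \eqref{eq:rho-cases} is correctly placed at $\omega^*$. Everything else is elementary algebra with one-variable quadratics, supported only by $\lambda_{\min}^+ \le \lambda_{\max} \le 1$.
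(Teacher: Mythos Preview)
Your approach is essentially identical to the paper's: reduce $\rho(\omega)$ to the maximum of the two endpoint parabolas $(1-\omega\lambda_{\min}^+)^2$ and $(1-\omega\lambda_{\max})^2$, locate their crossings at $\omega=0$ and $\omega=\omega^*$, read off the case split, and then substitute the three specific stepsizes. One small slip to fix: in your monotonicity paragraph you assert $\rho(\omega)=(1-\omega\lambda_{\min}^+)^2$ on all of $(-\infty,\omega^*]$, which contradicts the case formula \eqref{eq:rho-cases} you just established for $\omega\le 0$; simply treat that piece separately (there $1-\omega\lambda_{\max}\ge 1>0$, so $(1-\omega\lambda_{\max})^2$ is decreasing as well), and the argument goes through.
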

\begin{proof}
Recall that $\lambda_{\max} \leq 1$. 
Letting $\rho_i (\omega) = (1 - \omega\lambda_i)^2$, 
it is easy to see that $\rho(\omega) =\max\{\rho_j (\omega), \rho_n (\omega)\}$, where $j$ is such that $\lambda_j = \lambda_{\min}^+$. Note that $\rho_j(\omega) =\rho_n (\omega)$ for $\omega \in \{0, \omega^* \}$. From this we deduce that $\rho_j\geq \rho_n$ on $(-\infty, 0]$, $\rho_j\leq \rho_n$ on $[0, \omega^* ]$, and
$\rho_j\geq \rho_n$ on $[\omega^* , +\infty)$, obtaining \eqref{eq:rho-cases}. We see that $\rho$ is decreasing on $(-\infty, \omega^* ]$, and increasing on
$[\omega^* , +\infty)$. The remaining results follow directly by plugging specific values of $\omega$ into \eqref{eq:rho-cases}.
\end{proof}

 Theorem~\ref{thm:alg1_bestomega} can be intuitively understood in the following way. By design, we know that $\lambda_{\max} \leq 1$. If we do not have a better bound on the largest eigenvalue, we can simply choose
$\omega = 1$ to ensure convergence. If we have a stronger bound available, say $\lambda_{\max} \leq U < 1$, we can pick $\omega = 1/U$, and the convergence rate will improve. The better the bound, the better
the rate. However, using a stepsize of the form $\omega = 1/U$ where $U$ is not an upper bound on $\lambda_{\max}$ is risky: if we underestimate the eigenvalue by a factor of 2 or more, we can not guarantee convergence. Indeed, if $U \leq \lambda_{\max} /2$, then $1/U \geq 2/\lambda_{\max}$ and hence $\rho(\omega) \geq 1$. Beyond this
point, information about $\lambda_{\min}^+$ is useful. However, the best possible improvement beyond this only leads to a further factor of 2 speedup in terms of the number of iterations. Therefore, one needs to be careful about underestimating $\lambda_{\max}$.

\begin{example}[Random vectors]
An important class of methods is obtained by restricting $\mS$ to random vectors. In this case,
\begin{eqnarray*}\lambda_{\min}^+ +\lambda_{\max} &\leq &   \sum_{i=1}^n\lambda_i =  \trace{\mB^{-1/2} \Exp{\mZ}\mB^{-1/2} } 
= \Exp{\trace{\mB^{-1/2} \mZ\mB^{-1/2} }} \\
&=& \Exp{\trace{\mB^{-1} \mZ }} = \Exp{{\rm dim}(\range{\mB^{-1} \mZ})} = 1,\end{eqnarray*}
and thus
$\omega^* = 2/(\lambda_{\min}^+ + \lambda_{\max}) \geq 2.$ This means that in this case we can always safely choose the relaxation parameter to be $\omega=2$. This  results in faster rate than the choice $\omega=1$.
\end{example}

\subsection{L2 convergence}

In this section we establish a bound on $\Exp{\|x_k-x_*\|_{\mB}^2}$, i.e., we prove $L2$ convergence. This is a stronger type of convergence than what we get by bounding $\|\Exp{x_k-x_*}\|_{\mB}^2$. Indeed, for any random vector $x_k$  we have the inequality (see Lemma~4.1 in \cite{SIMAX2015})
\[\Exp{\|x_k-x_*\|_{\mB}^2}  = \left\|\Exp{x_k-x_*}\right\|_{\mB}^2 + \Exp{\left\| x_k - \Exp{x_k}\right\|^2_{\mB}}.\] 
Hence, L2 convergence also implies that the quantity $\Exp{\left\| x_k - \Exp{x_k}\right\|^2_{\mB}}$---the {\em total variance}\footnote{Total variance of a random vector is the trace of its covariance matrix.} of $x_k$---converges to zero.

We shall first establish an insightful lemma. The lemma connects two important measures of success:   $\|x_k-x_*\|_\mB^2$ and $f(x_k)$.

\begin{lemma} \label{lem:identitiesxx} Choose $x_0\in \R^n$ and let $\{x_k\}_{k=0}^\infty$ be the random iterates produced by Algorithm~\ref{alg:alg1}, with an arbitrary relaxation parameter $\omega\in \R$. Let $x_* \in \cL$. Then we have the identities $\norm{x_{k+1}-x_{k}}_\mB^2 = 2\omega^2 f_{\mS_k}(x_k)$, and
\begin{equation}\label{eq:ho097sjhs78s}
\norm{x_{k+1}-x_*}_\mB^2 = \norm{x_{k}-x_*}_\mB^2 -  2\omega(2- \omega)f_{\mS_k}(x_k).
\end{equation}
Moreover, $\Exp{\|x_{k+1}-x_k\|_{\mB}^2} = 2\omega^2 \Exp{f(x_k)}$, and
\begin{equation}\label{eq:hoihoihsvjhvnmx}
\Exp{\|x_{k+1}-x_*\|_{\mB}^2} = \Exp{\|x_k-x_*\|_{\mB}^2} -  2\omega(2- \omega)\Exp{f(x_k)}.
\end{equation}
\end{lemma}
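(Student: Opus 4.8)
The statement is an identity-level computation, so I would avoid any inequalities and simply expand. The key is to write the iteration of Algorithm~\ref{alg:alg1} in gradient form, namely $x_{k+1} = x_k - \omega \nabla f_{\mS_k}(x_k)$ with $\nabla f_{\mS_k}(x_k) = \mB^{-1}\mA^\top \mH_k(\mA x_k - b)$, and then repeatedly invoke the identities already collected in Lemma~\ref{lem:all_sort_of_stuff_is_equal} and equations \eqref{eq:prodstoch2}, \eqref{eq:fder2}, \eqref{eq:v(x)}.

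\textbf{Step 1 (distance moved).} Since $x_{k+1}-x_k = -\omega \nabla f_{\mS_k}(x_k)$, taking $\mB$-norms gives $\|x_{k+1}-x_k\|_\mB^2 = \omega^2 \|\nabla f_{\mS_k}(x_k)\|_\mB^2$, and by \eqref{eq:v(x)} (which says $f_{\mS}(x) = \tfrac12\|\nabla f_{\mS}(x)\|_\mB^2$) this equals $2\omega^2 f_{\mS_k}(x_k)$.

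\textbf{Step 2 (distance to a solution).} Fix $x_*\in \cL$ and write $x_{k+1}-x_* = (x_k-x_*) - \omega \nabla f_{\mS_k}(x_k)$. Expanding the squared $\mB$-norm by bilinearity of $\langle\cdot,\cdot\rangle_\mB$ yields three terms: $\|x_k-x_*\|_\mB^2$, the cross term $-2\omega\langle x_k-x_*, \nabla f_{\mS_k}(x_k)\rangle_\mB$, and $\omega^2\|\nabla f_{\mS_k}(x_k)\|_\mB^2 = 2\omega^2 f_{\mS_k}(x_k)$ (again by \eqref{eq:v(x)}). For the cross term I would use \eqref{eq:fder2}, $\nabla f_{\mS_k}(x_k) = \mB^{-1}\mZ_k(x_k-x_*)$, so that $\langle x_k-x_*,\nabla f_{\mS_k}(x_k)\rangle_\mB = (x_k-x_*)^\top \mZ_k (x_k-x_*) = 2 f_{\mS_k}(x_k)$ by \eqref{eq:prodstoch2}. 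Collecting, $\|x_{k+1}-x_*\|_\mB^2 = \|x_k-x_*\|_\mB^2 - 4\omega f_{\mS_k}(x_k) + 2\omega^2 f_{\mS_k}(x_k) = \|x_k-x_*\|_\mB^2 - 2\omega(2-\omega)f_{\mS_k}(x_k)$, which is \eqref{eq:ho097sjhs78s}.

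\textbf{Step 3 (take expectations).} Both identities are deterministic given $\mS_k$ and $x_k$. Since $\mS_k$ is drawn afresh, independently of $x_k$ (which is a function of $\mS_0,\dots,\mS_{k-1}$ and $x_0$), conditioning on $x_k$ gives $\Exp{f_{\mS_k}(x_k)\mid x_k} = \EE{\mS\sim\cD}{f_\mS(x_k)} = f(x_k)$ by the definition \eqref{eq:min_f}. Applying this to Step~1 and to \eqref{eq:ho097sjhs78s}, and then using the tower property to remove the conditioning, yields $\Exp{\|x_{k+1}-x_k\|_\mB^2} = 2\omega^2\Exp{f(x_k)}$ and \eqref{eq:hoihoihsvjhvnmx}. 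There is essentially no obstacle here; the only point requiring a word of care is the independence of $\mS_k$ from the history, which justifies pulling $f(x_k)$ out of the conditional expectation, and keeping straight that $f_{\mS_k}(x_k)$ (with the \emph{sampled} matrix) averages to $f(x_k)$, not to $f_{\mS_k}$ of anything else.
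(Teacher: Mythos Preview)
Your proposal is correct and follows essentially the same route as the paper: both write the update as $x_{k+1}-x_* = (\mI-\omega\mB^{-1}\mZ_k)(x_k-x_*)$ (equivalently, in your gradient notation), expand the squared $\mB$-norm, and reduce the cross and square terms to $f_{\mS_k}(x_k)$ via the identities of Lemma~\ref{lem:all_sort_of_stuff_is_equal}. The only cosmetic difference is that the paper invokes the projector identity $\mZ_k\mB^{-1}\mZ_k=\mZ_k$ from \eqref{eq:ZBZ} directly, whereas you invoke its consequence \eqref{eq:v(x)}; the expectation step via the tower property is identical.
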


\begin{proof}
Recall that Algorithm~\ref{alg:alg1} performs the update $x_{k+1} = x_k - \omega \mB^{-1} \mZ_k (x_k-x_*)$. From this we get
\begin{equation}\label{eq:ohihosytrsl} \|x_{k+1}-x_k\|_{\mB}^2  \overset{\eqref{eq:ZBZ}}{=} \omega^2  (x_k-x_*)^\top \mZ_k (x_k-x_*)  \overset{\eqref{eq:prodstoch2}}{=} 2\omega^2 f_{\mS_k}(x_k), \end{equation}
In a similar vein,
\begin{eqnarray*}
\norm{x_{k+1}-x_*}_{\mB}^2 &= &\norm{(\mI - \omega\mB^{-1} \mZ_k)(x_k-x_*)}_{\mB}^2\\
& =& (x_k-x_*)^\top (\mI - \omega \mZ_k\mB^{-1} )\mB(\mI - \omega\mB^{-1} \mZ_k)(x_k-x_*)\\
&\overset{\eqref{eq:ZBZ}}{=} & (x_k-x_*)^\top (\mB - \omega(2-\omega) \mZ_k)(x_k-x_*)\\
&\overset{\eqref{eq:prodstoch2}}{=} & \norm{x_k-x_*}_{\mB}^2 -  2 \omega(2- \omega) f_{\mS_k}(x_k),
\end{eqnarray*}
establishing  \eqref{eq:ho097sjhs78s}. Taking expectation in \eqref{eq:ohihosytrsl}, we get
\begin{eqnarray*}\Exp{\norm{x_{k+1}-x_{k}}_\mB^2} = \Exp{\Exp{\norm{x_{k+1}-x_{k}}_\mB^2 \;|\; x_k} } = 2\omega^2 \Exp{\Exp{f_{\mS_k}(x_k) \;|\; x_k}} = 2\omega^2 \Exp{f(x_k)}.\end{eqnarray*}
Taking expectation in  \eqref{eq:ho097sjhs78s}, we get $\Exp{\norm{x_{k+1}-x_*}_{\mB}^2 \, | \, x_k} =  \norm{x_k-x_*}_{\mB}^2 -  2 \omega(2- \omega) f(x_k)$. It remains to take expectation again.
\end{proof}

In our next result we utilize Lemma~\ref{lem:identitiesxx}  to establish L2 convergence of the basic method. 
 
\begin{theorem}[$L2$ convergence]\label{thm:alg1_complexity_second} Let Assumption~\ref{ass:main} hold and set $x_* = \Pi^{\mB}_{\cL}(x_0)$. Let $\{x_k\}$ be the random iterates produced by Algorithm~\ref{alg:alg1}, where the relaxation parameter satisfies $0<\omega <2$. 

\begin{itemize}
\item[(i)]
For $k\geq 0$ we have
\begin{eqnarray}(1-\omega(2-\omega)\lambda_{\max})^k \|x_0-x_*\|_{\mB}^2 & \leq & \Exp{\|x_k-x_*\|_{\mB}^2} \notag \\
&\leq & (1-\omega(2-\omega)\lambda_{\min}^+)^k \|x_0-x_*\|_{\mB}^2. \phantom{aabcd}
\label{eq:hhshgygrrreeeo}
\end{eqnarray}

\item[(ii)] The average iterate $\hat{x}_k \eqdef \frac{1}{k}\sum_{t=0}^{k-1} x_t$ for  all $k\geq 1$ satisfies
\begin{equation}\label{eq:kjhkjhuys9s}\Exp{\|\hat{x}_k - x_*\|_{\mB}^2} \leq \frac{\|x_0-x_*\|_{\mB}^2}{2\omega(2-\omega)\lambda_{\min}^+ k}.\end{equation}
\end{itemize}

The best rate  is achieved when $\omega =1.$


\end{theorem}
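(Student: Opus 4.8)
The key ingredient is the recursion \eqref{eq:hoihoihsvjhvnmx} from Lemma~\ref{lem:identitiesxx}, which for $x_*=\Pi^{\mB}_{\cL}(x_0)$ reads $\Exp{\|x_{k+1}-x_*\|_{\mB}^2} = \Exp{\|x_k-x_*\|_{\mB}^2} - 2\omega(2-\omega)\Exp{f(x_k)}$. To convert this into a linear rate I would sandwich $\Exp{f(x_k)}$ between multiples of $\Exp{\|x_k-x_*\|_{\mB}^2}$ using Lemma~\ref{lem:sandwich}: the upper bound \eqref{eq:jshvs5r6989ss} gives $f(x_k)\le \tfrac{\lambda_{\max}}{2}\|x_k-x_*\|_{\mB}^2$, and since Assumption~\ref{ass:main} holds and $x_*=\Pi^{\mB}_{\cL}(x_0)$ — which, crucially, remains the $\mB$-projection of every iterate $x_k$ onto $\cL$, because the update stays within the affine span of $\mB^{-1}\mA^\top$-directions (or simply because each $x_k-x_*\perp_{\mB}\kernel{\mA}$ is preserved by the iteration) — the lower bound \eqref{eq:jsguygusopoipoip} gives $f(x_k)\ge \tfrac{\lambda_{\min}^+}{2}\|x_k-x_*\|_{\mB}^2$. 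I should verify the projection claim carefully: from \eqref{eq:9898gsgusss}, $e_{k+1}=(\mI-\omega\mB^{-1}\mZ_k)e_k$, and since $\range{\mB^{-1}\mZ_k}\subseteq\range{\mB^{-1}\mA^\top}$ one checks inductively that $\mB^{1/2}e_k\in\range{\mB^{-1/2}\mA^\top}$ whenever $\mB^{1/2}e_0$ is, so the lower bound applies at every step.

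Plugging these into the recursion: taking expectations and using $0<\omega<2$ so that $2\omega(2-\omega)>0$,
\[
\Exp{\|x_{k+1}-x_*\|_{\mB}^2} \le \left(1-2\omega(2-\omega)\tfrac{\lambda_{\min}^+}{2}\right)\Exp{\|x_k-x_*\|_{\mB}^2} = \left(1-\omega(2-\omega)\lambda_{\min}^+\right)\Exp{\|x_k-x_*\|_{\mB}^2},
\]
and symmetrically $\Exp{\|x_{k+1}-x_*\|_{\mB}^2}\ge\left(1-\omega(2-\omega)\lambda_{\max}\right)\Exp{\|x_k-x_*\|_{\mB}^2}$. Unrolling both gives \eqref{eq:hhshgygrrreeeo}. (One should note $1-\omega(2-\omega)\lambda_{\min}^+\in[0,1)$ since $\omega(2-\omega)\in(0,1]$ and $\lambda_{\min}^+\le\lambda_{\max}\le1$, so the bound is genuinely contractive.)

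For part (ii), I would sum \eqref{eq:hoihoihsvjhvnmx} telescopically: $2\omega(2-\omega)\sum_{t=0}^{k-1}\Exp{f(x_t)} = \|x_0-x_*\|_{\mB}^2 - \Exp{\|x_k-x_*\|_{\mB}^2}\le\|x_0-x_*\|_{\mB}^2$. Then by convexity of $f$ (Lemma~\ref{lem:f-various}, $f$ is a convex quadratic) and Jensen, $f(\hat x_k)\le\tfrac1k\sum_{t=0}^{k-1}f(x_t)$, and using \eqref{eq:jsguygusopoipoip} once more, $\tfrac{\lambda_{\min}^+}{2}\Exp{\|\hat x_k-x_*\|_{\mB}^2}\le\Exp{f(\hat x_k)}\le\tfrac1k\cdot\tfrac{\|x_0-x_*\|_{\mB}^2}{2\omega(2-\omega)}$, which rearranges to \eqref{eq:kjhkjhuys9s}. (Here I should also check $\hat x_k-x_*\perp_{\mB}\kernel{\mA}$ so the lower bound applies to the average; this follows because each $x_t-x_*$ has this property and the property is preserved under convex combinations.) Finally, the function $\omega\mapsto\omega(2-\omega)=1-(1-\omega)^2$ on $(0,2)$ is maximized at $\omega=1$, which simultaneously minimizes every rate $1-\omega(2-\omega)\lambda_{\min}^+$ and the bound in \eqref{eq:kjhkjhuys9s}, giving the last assertion.

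**Main obstacle.** The only subtle point is justifying that $x_*=\Pi^{\mB}_{\cL}(x_0)$ stays the correct projection point for all iterates so that the lower bound \eqref{eq:jsguygusopoipoip} is legitimately invoked at each step and for the running average — everything else is a routine combination of the already-established recursion with the two-sided quadratic bounds. I expect the invariance argument for $\mB^{1/2}e_k\in\range{\mB^{-1/2}\mA^\top}$ to be a short induction, already implicit in the proof of Theorem~\ref{thm:alg1_complexity_first}.
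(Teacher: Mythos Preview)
Your proof is correct and follows essentially the same route as the paper: combine the identity \eqref{eq:hoihoihsvjhvnmx} with the two-sided bounds \eqref{eq:jshvs5r6989ss}--\eqref{eq:jsguygusopoipoip} and unroll for part (i), and telescope \eqref{eq:hoihoihsvjhvnmx} for part (ii). The only cosmetic difference is that in (ii) the paper applies Jensen to the squared norm, $\|\hat x_k-x_*\|_\mB^2\le\tfrac1k\sum_t\|x_t-x_*\|_\mB^2$, and then invokes \eqref{eq:jsguygusopoipoip} termwise, whereas you apply Jensen to $f$ and then \eqref{eq:jsguygusopoipoip} once to $\hat x_k$; your explicit verification that $x_k-x_*\in\range{\mB^{-1}\mA^\top}$ for all $k$ (so that \eqref{eq:jsguygusopoipoip} is legitimately applicable) is a detail the paper's proof leaves implicit.
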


\begin{proof} Let $\phi_k = \Exp{f(x_k)}$ and $r_k = \Exp{\|x_k-x_*\|_{\mB}^2}$. 

\begin{enumerate}
\item[(i)]  We have
$r_{k+1} \overset{\eqref{eq:hoihoihsvjhvnmx}}{=} r_k - 2\omega (2-\omega)\phi_k \overset{\eqref{eq:jsguygusopoipoip} }{\leq} r_k - \omega (2-\omega)\lambda_{\min}^+ r_k,$
and
$r_{k+1} \overset{\eqref{eq:hoihoihsvjhvnmx}}{=} r_k - 2\omega (2-\omega)\phi_k \overset{\eqref{eq:jshvs5r6989ss} }{\geq} r_k - \omega(2-\omega)\lambda_{\max} r_k.$
Inequalities \eqref{eq:hhshgygrrreeeo} follow from this by unrolling the recurrences.

\item[(ii)] By summing up  the identities from \eqref{eq:hoihoihsvjhvnmx}, we get
$2\omega (2-\omega)\sum_{t=0}^{k-1} \phi_t = r_0 - r_k$. Therefore,
\begin{eqnarray*}\Exp{\|\hat{x}_k - x_*\|_{\mB}^2}& =& \Exp{\left\|\frac{1}{k}\sum_{t=0}^{k-1}( x_t - x_*) \right\|_{\mB}^2} \leq \Exp{  \frac{1}{k}\sum_{t=0}^{k-1} \left\| x_t - x_* \right\|_{\mB}^2  } \\
&=&\frac{1}{k}\sum_{t=0}^{k-1} r_t \overset{\eqref{eq:jsguygusopoipoip}}{\leq} \frac{1}{\lambda_{\min}^+ k}\sum_{t=0}^{k-1} \phi_t   \leq \frac{r_0}{2\omega(2-\omega)\lambda_{\min}^+ k}.
\end{eqnarray*}

\end{enumerate}

\end{proof}

Note that in part (i) we give both an upper and a {\em lower} bound on $\Exp{\|x_k-x_*\|_\mB^2}$.  

\subsection{Convergence of expected function values}

In this section we establish a linear convergence rate for the decay of $\Exp{f(x_k)}$ to zero. We prove two results, with different quantitative (speed) and qualitative (assumptions and insights gained) qualities. 

The complexity in the first result (Theorem~\ref{thm:complexity_f} ) is disappointing: it is (slightly) worse than quadratic in the condition number $\condnum$. However, we do not need to invoke Assumption~\ref{ass:main} (exactness). In addition, this result implies that the expected function values decay {\em monotonically} to zero.

\begin{theorem}[Convergence of expected function values] \label{thm:complexity_f} Choose any $x_0\in \R^n$ and let
$\{x_k\}$ be the random iterates produced by Algorithm~\ref{alg:alg1}, where $0\leq \omega\leq 2/\condnum$ (note that $2/\condnum =  2\lambda_{\min}^+/\lambda_{\max} \leq 2$). Then ${\rm E} \left[f(x_{k+1})\right] \leq (1- 2 \lambda_{\min}^+ \omega + \lambda_{\max} \omega^2) {\rm E}\left[f(x_k)\right]$  for all $k$, and hence
\begin{equation}\label{eq:098ys0huhd8hddd}\Exp{f(x_{k})} \leq (1  - 2 \lambda_{\min}^+\omega +\lambda_{\max} \omega^2)^k f(x_0).
\end{equation}
The optimal rate is achieved for $\omega = 1/\condnum$, in which case we get the bound
$\Exp{f(x_k)} \leq \left(1 - \frac{(\lambda_{\min}^+)^2}{\lambda_{\max}}\right)^k f(x_0).$
\end{theorem}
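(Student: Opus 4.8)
The plan is to establish the one-step estimate $\Exp{f(x_{k+1}) \mid x_k} \le \theta(\omega)\, f(x_k)$ with $\theta(\omega) \eqdef 1 - 2\lambda_{\min}^+ \omega + \lambda_{\max}\omega^2$, and then unroll it using the tower property. To set this up, fix any $x_*\in\cL$ and write $e_k \eqdef x_k - x_*$, so that the iteration reads $e_{k+1} = (\mI - \omega \mB^{-1}\mZ_k)e_k$ (see \eqref{eq:9898gsgusss}), while $f(x) = \tfrac12 (x-x_*)^\top \Exp{\mZ}(x-x_*)$ by \eqref{eq:probminf2}. First I would expand $f(x_{k+1}) = \tfrac12 e_k^\top (\mI - \omega \mZ_k \mB^{-1})\Exp{\mZ}(\mI - \omega \mB^{-1}\mZ_k) e_k$ and take the expectation conditional on $x_k$. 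Since $\Exp{\mZ_k} = \Exp{\mZ}$ and $\mB^{-1}\Exp{\mZ}$ is deterministic, the two cross terms each contribute $-\omega\cdot\tfrac12 e_k^\top \Exp{\mZ}\mB^{-1}\Exp{\mZ}e_k = -\tfrac{\omega}{2}\|\nabla f(x_k)\|_\mB^2$ (using the gradient formula \eqref{eq:grad_f}), so
\[ \Exp{f(x_{k+1}) \mid x_k} = f(x_k) - \omega\|\nabla f(x_k)\|_\mB^2 + \tfrac{\omega^2}{2}\, e_k^\top \Exp{\mZ_k \mB^{-1}\Exp{\mZ}\mB^{-1}\mZ_k} e_k. \]

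The key step — the one I expect to be the main obstacle — is controlling the second-moment matrix $\Exp{\mZ_k \mB^{-1}\Exp{\mZ}\mB^{-1}\mZ_k}$. Here I would use that $\mB^{-1}\Exp{\mZ}\mB^{-1} = \mB^{-1/2}\mW\mB^{-1/2} \preceq \lambda_{\max}\mB^{-1}$ (a congruence applied to $\mW \preceq \lambda_{\max}\mI$, from \eqref{eq:eig_decomp} and Lemma~\ref{lem:spectrumxx}) together with the projection identity $\mZ_k\mB^{-1}\mZ_k = \mZ_k$ from \eqref{eq:ZBZ}: sandwiching gives $\mZ_k \mB^{-1}\Exp{\mZ}\mB^{-1}\mZ_k \preceq \lambda_{\max}\mZ_k \mB^{-1}\mZ_k = \lambda_{\max}\mZ_k$, and taking expectations yields $\Exp{\mZ_k \mB^{-1}\Exp{\mZ}\mB^{-1}\mZ_k} \preceq \lambda_{\max}\Exp{\mZ}$. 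Plugging this back in and recalling $\tfrac12 e_k^\top\Exp{\mZ}e_k = f(x_k)$ gives $\Exp{f(x_{k+1}) \mid x_k} \le f(x_k) - \omega\|\nabla f(x_k)\|_\mB^2 + \lambda_{\max}\omega^2 f(x_k)$. Finally I would invoke the lower bound in \eqref{eq:sandwich}, $\|\nabla f(x_k)\|_\mB^2 \ge 2\lambda_{\min}^+ f(x_k)$, which together with $\omega\ge 0$ yields exactly $\Exp{f(x_{k+1}) \mid x_k} \le \theta(\omega) f(x_k)$. (The same recursion also follows more quickly from the SGD interpretation $x_{k+1} = x_k - \omega\nabla f_{\mS_k}(x_k)$ and $\lambda_{\max}$-smoothness of the quadratic $f$, using $\|\nabla f_{\mS_k}(x_k)\|_\mB^2 = 2f_{\mS_k}(x_k)$ from \eqref{eq:v(x)} and $\Exp{f_{\mS_k}(x_k)\mid x_k} = f(x_k)$; I would mention this as an alternative.)

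It then remains to unroll and optimize. Taking total expectations and iterating gives $\Exp{f(x_k)} \le \theta(\omega)^k f(x_0)$, a legitimate monotone bound because $\theta(\omega)\ge 0$: the discriminant $4(\lambda_{\min}^+)^2 - 4\lambda_{\max}$ of the parabola $\theta$ is $\le 0$ since $(\lambda_{\min}^+)^2 \le \lambda_{\max}^2 \le \lambda_{\max}$ (Lemma~\ref{lem:spectrumxx}). Since $\theta(\omega) - 1 = \omega(\lambda_{\max}\omega - 2\lambda_{\min}^+)$, we have $\theta(\omega) < 1$ precisely for $0 < \omega < 2\lambda_{\min}^+/\lambda_{\max} = 2/\condnum$, matching the stepsize range in the statement. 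The convex parabola $\theta$ is minimized at its vertex $\omega = \lambda_{\min}^+/\lambda_{\max} = 1/\condnum$, which lies in the admissible interval, and there $\theta(1/\condnum) = 1 - (\lambda_{\min}^+)^2/\lambda_{\max}$, giving the claimed optimal rate. I would also note that no exactness assumption is used, since both \eqref{eq:sandwich} and Lemma~\ref{lem:spectrumxx} hold unconditionally.
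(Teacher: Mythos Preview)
Your proposal is correct and is essentially the same argument as the paper's, just written in matrix notation rather than in terms of stochastic gradients. Your ``alternative'' via the SGD interpretation, using $\Exp{\mZ}\preceq \lambda_{\max}\mB$ together with $\|\nabla f_{\mS_k}(x_k)\|_{\mB}^2 = 2f_{\mS_k}(x_k)$ and the lower bound in \eqref{eq:sandwich}, is in fact precisely the paper's route; your primary derivation is the same computation after substituting $\nabla f_{\mS_k}(x_k)=\mB^{-1}\mZ_k e_k$ and rewriting the key inequality $\Exp{\mZ}\preceq \lambda_{\max}\mB$ as $\mB^{-1}\Exp{\mZ}\mB^{-1}\preceq \lambda_{\max}\mB^{-1}$.
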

\begin{proof}
Let $\mS\sim \cD$ be independent  from $\mS_0,\mS_1,\dots,\mS_k$ and fix any $x_*\in \cL$. Then we have
\begin{eqnarray*}f(x_{k+1}) &\overset{\eqref{eq:min_f}}{=}& \EE{\mS\sim \cD}{f_{\mS}(x_{k+1})}\\
&\overset{\eqref{eq:alg:SGD}}{=}& \EE{\mS \sim \cD}{f_{\mS} (x_k - \omega \nabla f_{\mS_k}(x_k))}\\
&\overset{\eqref{eq:prodstoch2} }{=}&   \frac{1}{2}\EE{\mS \sim \cD}{(x_k - x_* - \omega \nabla f_{\mS_k}(x_k))^\top \mZ (x_k - x_* - \omega \nabla f_{\mS_k}(x_k))}\\
&=&\frac{1}{2}(x_k - x_*-\omega \nabla f_{\mS_k}(x_k))^\top \Exp{\mZ} (x_k - x_*-\omega \nabla f_{\mS_k}(x_k))\\
&=& \frac{1}{2}(x_k - x_*)^\top \Exp{\mZ} (x_k - x_*) - \omega (\nabla f_{\mS_k}(x_k))^\top \Exp{\mZ} (x_k-x_*) \\
&& \quad + \frac{\omega^2}{2}\|\nabla f_{\mS_k}(x_k)\|_{\Exp{\mZ}}^2\\
&\overset{\eqref{eq:probminf2}}{=}& f(x_k) - \omega (\nabla f_{\mS_k}(x_k))^\top \Exp{\mZ} (x_k-x_*) + \frac{\omega^2}{2}\|\nabla f_{\mS_k}(x_k)\|_{\Exp{\mZ}}^2.
\end{eqnarray*}

Taking expectations, conditioned on $x_k$ (that is, the expectation is with respect to $\mS_k$), we can further write
\begin{equation}\label{eq:hbd87g7udds}\Exp{f(x_{k+1}) \;|\; x_k} =  f(x_k) - \omega \alpha_k + \omega^2 \beta_k, \end{equation}
where
\begin{equation}\label{eq:alpha_nbf97g9fg} \alpha_k \eqdef (\EE{\mS_k\sim \cD}{\nabla f_{\mS_k}(x_k)})^\top \Exp{\mZ} (x_k-x_*),\end{equation}
and
\begin{equation}\label{eq:beta_bg9f87gf} \beta_k\eqdef \frac{1}{2}\EE{\mS_k\sim \cD}{\|\nabla f_{\mS_k}(x_k)\|_{\Exp{\mZ}}^2}. \end{equation}
We shall now bound $\alpha_k$ from below and $\beta_k$ from above in terms of $f(x_k)$. Using the inequality $\Exp{\mZ}\preceq \lambda_{\max} \mB$ (this follows from 
 $\mB^{-1/2}\Exp{\mZ}\mB^{-1/2}\preceq \lambda_{\max}\mI$), we get
\[\beta_k \overset{\eqref{eq:beta_bg9f87gf} }{\leq} \frac{\lambda_{\max}}{2} \EE{\mS_k\sim \cD}{\|\nabla f_{\mS_k}(x_k)\|_{\mB}^2} \overset{\eqref{eq:v(x)}}{=} \lambda_{\max}  \EE{\mS_k\sim \cD}{f_{\mS_k}(x_k)} \overset{\eqref{eq:min_f}}{=} \lambda_{\max}f(x_k).\]

On the other hand,
\[
\alpha_k \overset{\eqref{eq:alpha_nbf97g9fg} +\eqref{eq:grad_f}}{=} (x_k-x_*)^\top \Exp{\mZ} \mB^{-1} \Exp{\mZ} (x_k-x_*)
\overset{\eqref{eq:grad_f}}{=}\|\nabla f(x_k)\|_{\mB}^2
\overset{\eqref{eq:sandwich}}{\geq} 2 \lambda_{\min}^+ f(x_k).
\]

Substituting the bounds for $\alpha_k$ and $\beta_k$ into \eqref{eq:hbd87g7udds}, we get  
$\Exp{f(x_{k+1}) \;|\; x_k} \leq (1- 2\lambda_{\min}^+ \omega +  \lambda_{\max} \omega^2)f(x_k)$. Taking expectations again gives
\[ \Exp{f(x_{k+1})} = \Exp{\Exp{f(x_{k+1}) \;|\; x_k}}
\leq (1 - 2\lambda_{\min}^+ \omega + \lambda_{\max}\omega^2) \Exp{f(x_k)}.
\]
It remains to unroll the recurrence.
\end{proof}

We now present an alternative convergence result (Theorem~\ref{thm:complexity_f_2}), one in which we do not bound the decrease in terms of the  initial function value, $f(x_0)$, but in terms of a somewhat larger quantity. This allows us to provide a better convergence rate. For this result to hold, however, we need to invoke Assumption~\ref{ass:main}. Note also that this result does not imply that the expected function values decay monotonically.

\begin{theorem}[Convergence of expected function values]\label{thm:complexity_f_2} 
 Choose $x_0\in\R^n$, and let $\{x_k\}_{k=0}^\infty$ be the random iterates produced by Algorithm~\ref{alg:alg1}, where the relaxation parameter satisfies $0<\omega <2$.

\begin{itemize}
 \item[(i)] Let $x_*\in \cL$. The average iterate $\hat{x}_k \eqdef \frac{1}{k}\sum_{t=0}^{k-1} x_t$ for  all $k\geq 1$ satisfies
\begin{equation}\label{eq:kjhkjhuys9sxxss}\Exp{f(\hat{x}_k)} \leq \frac{\|x_0-x_*\|_{\mB}^2}{2\omega(2-\omega) k}.\end{equation}
 \item[(ii)] Now let Assumption~\ref{ass:main} hold. For $x_* = \Pi^{\mB}_{\cL}(x_0)$ and $k\geq 0$ we have
\begin{equation}
\Exp{f(x_k)} \leq \left(1- \omega(2- \omega)\lambda_{\min}^+ \right)^k \frac{\lambda_{\max}\norm{x_0-x_*}_{\mB}^2}{2}.
\end{equation}
\end{itemize}
The best rate  is achieved when $\omega =1.$
\end{theorem}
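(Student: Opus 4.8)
The plan is to handle the two parts separately, in each case using the energy identity of Lemma~\ref{lem:identitiesxx} (equation \eqref{eq:hoihoihsvjhvnmx}) as the engine; this mirrors the argument in the proof of Theorem~\ref{thm:alg1_complexity_second}, transported from iterates to function values.

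\emph{Part (i).} I would set $r_k \eqdef \Exp{\|x_k-x_*\|_{\mB}^2}$ and $\phi_k \eqdef \Exp{f(x_k)}$. Since $x_*\in\cL$ and $\omega$ is arbitrary, Lemma~\ref{lem:identitiesxx} gives $r_{k+1}=r_k-2\omega(2-\omega)\phi_k$. Telescoping over $t=0,\dots,k-1$ and dropping the nonnegative term $r_k$ yields $2\omega(2-\omega)\sum_{t=0}^{k-1}\phi_t = r_0-r_k\le \|x_0-x_*\|_{\mB}^2$. Because $f$ is a nonnegative convex quadratic (noted after Lemma~\ref{lem:f-various}, since $\Exp{\mZ}\succeq 0$), Jensen's inequality gives $f(\hat{x}_k)\le \tfrac1k\sum_{t=0}^{k-1}f(x_t)$; taking expectations and invoking the previous display gives $\Exp{f(\hat{x}_k)}\le \tfrac1k\sum_{t=0}^{k-1}\phi_t\le \frac{\|x_0-x_*\|_{\mB}^2}{2\omega(2-\omega)k}$, which is the claim. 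Here I use $0<\omega<2$ only to ensure $2\omega(2-\omega)>0$. Note that exactness is not needed for this part.

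\emph{Part (ii).} Now I would fix $x_*=\Pi^{\mB}_{\cL}(x_0)$ and first record that this point remains the $\mB$-projection of every iterate onto $\cL$: each update shifts $x_k$ by a vector in $\range{\mB^{-1}\mA^\top}$ (read off the update rule), and $x_0-x_*\in\range{\mB^{-1}\mA^\top}$ by \eqref{eq:project_linear}, so $x_k-x_*\in\range{\mB^{-1}\mA^\top}$ for all $k$; together with $x_*\in\cL$ this forces $\Pi^{\mB}_{\cL}(x_k)=x_*$. Consequently, under Assumption~\ref{ass:main} the lower bound \eqref{eq:jsguygusopoipoip} applies at $x=x_k$, so taking expectations $\phi_k\ge \tfrac{\lambda_{\min}^+}{2}r_k$. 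Substituting into $r_{k+1}=r_k-2\omega(2-\omega)\phi_k$ and using $0<\omega<2$ gives $r_{k+1}\le (1-\omega(2-\omega)\lambda_{\min}^+)r_k$, hence $r_k\le (1-\omega(2-\omega)\lambda_{\min}^+)^k\|x_0-x_*\|_{\mB}^2$ by induction. Finally the upper bound \eqref{eq:jshvs5r6989ss} gives $\phi_k\le\tfrac{\lambda_{\max}}{2}r_k$, and combining the last two inequalities delivers the stated estimate. For the ``best rate'' remark, in both parts the dependence on $\omega$ is entirely through $\omega(2-\omega)$, a concave parabola on $(0,2)$ maximized at $\omega=1$; this simultaneously minimizes the prefactor in (i) and the contraction factor $1-\omega(2-\omega)\lambda_{\min}^+$ in (ii).

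\emph{Main obstacle.} I do not expect a genuine obstacle; the one point meriting care is the invariance $\Pi^{\mB}_{\cL}(x_k)=x_*$ along the trajectory, which is what licenses the use of \eqref{eq:jsguygusopoipoip} for $x=x_k$ (and is implicitly relied on in Theorem~\ref{thm:alg1_complexity_second}). Everything else is telescoping the identity of Lemma~\ref{lem:identitiesxx} together with Jensen's inequality and the quadratic bounds of Lemma~\ref{lem:sandwich}.
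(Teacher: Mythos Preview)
Your proposal is correct and follows essentially the same approach as the paper: telescoping the identity \eqref{eq:hoihoihsvjhvnmx} together with Jensen's inequality for part (i), and combining the quadratic bounds \eqref{eq:jshvs5r6989ss}--\eqref{eq:jsguygusopoipoip} with the same identity for part (ii). The only cosmetic difference is that the paper's part (ii) simply cites Theorem~\ref{thm:alg1_complexity_second} for the bound on $r_k$, whereas you re-derive that recursion inline; your explicit justification that $\Pi^{\mB}_{\cL}(x_k)=x_*$ along the trajectory is a point the paper leaves implicit in both proofs.
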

\begin{proof} 
(i) Let $\phi_k = \Exp{f(x_k)}$ and $r_k = \Exp{\|x_k-x_*\|_{\mB}^2}$. By summing up  the identities from \eqref{eq:hoihoihsvjhvnmx}, we get
$2\omega (2-\omega)\sum_{t=0}^{k-1} \phi_t = r_0 - r_k$. Therefore, using Jensen's inequality,
\begin{eqnarray*}\Exp{f(\hat{x}_k)}& \leq & \Exp{\frac{1}{k}\sum_{t=0}^{k-1} f(x_t) } =   \frac{1}{k}\sum_{t=0}^{k-1} \phi_t = \frac{r_0-r_k}{2\omega(2-\omega) k} \leq \frac{r_0}{2\omega(2-\omega) k}.
\end{eqnarray*}

(ii) Combining inequality \eqref{eq:jshvs5r6989ss} with Theorem~\ref{thm:alg1_complexity_second}, we get
\[\Exp{f(x_k)} \leq \frac{\lambda_{\max}}{2}\Exp{ \|x_k-x_*\|_{\mB}^2} \overset{\eqref{eq:hhshgygrrreeeo}}{\leq}   \left(1- \omega(2- \omega)\lambda_{\min}^+ \right)^k \frac{\lambda_{\max}\norm{x_0-x_*}_{\mB}^2}{2}. \]

\end{proof}

\begin{remark} Theorems~\ref{thm:complexity_f} and \ref{thm:complexity_f_2} are complementary. In particular, the complexity result given in Theorem~\ref{thm:complexity_f} (for the last iterate) holds under weaker assumptions. Moreover, Theorem~\ref{thm:complexity_f}  implies monotonicity of expected function values. On the other hand, the rate is substantially better in Theorem~\ref{thm:complexity_f_2}. Also, Theorem~\ref{thm:complexity_f_2} applies to a wider range of stepsizes.
\end{remark}

It is also possible to obtain other convergence results as a corollary. For instance,  one can get a linear rate for the decay of the norms of the gradients as a corollary of Theorems~\ref{thm:complexity_f} and \ref{thm:complexity_f_2} using the upper bound in Lemma~\ref{lem:sandwich}.

\section{Parallel and Accelerated Methods} \label{sec:par_and_acc}

In this section we propose and analyze  {\em parallel} and {\em accelerated} variants of Algorithm~\ref{alg:alg1}.

\subsection{Parallel method} \label{sec:minibatch}

The parallel method \eqref{eq:parallel_method_intro} is formalized in this section as Algorithm~\ref{alg:alg2}.

\begin{algorithm}[!h]
\begin{algorithmic}[1]
\State \textbf{Parameters:} distribution $\cD$ from which to sample matrices; positive definite matrix $\mB\in \R^{n \times n}$; stepsize/relaxation parameter $\omega\in \R$; parallelism parameter $\mb$
\State Choose $x_0 \in \R^n$
\Comment Initialization
\For {$k = 0, 1, 2, \dots$}
	\For {$i = 1,2,\dots,\mb$}
	\State Draw $\mS_{ki} \sim \cD$
	\State Set $z_{k+1, i} = x_k -  \omega \mB^{-1} \mA^\top  \mS_{ki} (\mS_{ki}^\top  \mA \mB^{-1} \mA^\top  \mS_{ki})^\dagger \mS_{ki}^\top  (\mA x_k - b) $
\EndFor	
	\State 	Set $x_{k+1} = \frac{1}{\mb}\sum_{i=1}^{\mb} z_{k+1,i}$	
	\Comment	 Average the results
\EndFor
\end{algorithmic}
\caption{Parallel Method}
\label{alg:alg2}
\end{algorithm}

For brevity,  we only prove L2 convergence results. However, various other results can be obtained as well, as was the case for the basic method, such as convergence of expected iterates, expected function values and average iterates.

\begin{theorem} \label{thm:minibatch} Let Assumption~\ref{ass:main} hold and set $x_* = \Pi^{\mB}_{\cL}(x_0)$. Let $\{x_k\}_{k=0}^\infty$ be the random iterates produced by Algorithm~\ref{alg:alg2}, where the relaxation parameter satisfies $0 < \omega < 2/\xi(\mb)$, and $\xi(\mb)\eqdef \frac{1}{\mb}   + \left(1 - \frac{1}{\mb}\right) \lambda_{\max}$. Then 
\begin{equation}\label{eq:bu9fgffssd}\Exp{\norm{x_{k+1}-x_*}_{\mB}^2} \leq \rho(\omega,\mb)\cdot \Exp{\norm{x_{k}-x_*}_{\mB}^2},\end{equation}
and
$\Exp{f(x_k)} \leq \rho(\omega,\mb)^k \frac{\lambda_{\max}}{2}\|x_0-x_*\|_{\mB}^2,$
where $\rho(\omega,\mb) \eqdef  1 - \omega\left[2  - \omega \xi(\mb)\right] \lambda_{\min}^+ $.
For any fixed $\mb\geq 1$, the optimal stepsize choice is $\omega(\mb) \eqdef 1/\xi(\mb)$ and the associated optimal rate is \begin{equation}\label{eq:optimal_mb_rate}\rho(\omega(\mb),\mb) = 1 - \frac{\lambda_{\min}^+}{\tfrac{1}{\mb} + \left(1-\tfrac{1}{\mb}\right)\lambda_{\max}} .\end{equation}
In particular, if we use the optimal stepsize and let $K(\tau) \eqdef \left(1-\frac{1}{\tau}\right) \frac{\lambda_{\max}}{\lambda_{\min}^+} + \frac{1}{\tau}\frac{1}{\lambda_{\min}^+} $, then \eqref{eq:bu9fgffssd} implies  
\begin{equation}\label{eq:b97g98dbjjgsgdx} k \geq K(\tau) \log \frac{1}{\epsilon}\quad  \Rightarrow  \quad \Exp{\|x_k-x_*\|_{\mB}^2} \leq \epsilon \|x_0-x_*\|_{\mB}^2,\end{equation}

\end{theorem}

\begin{proof}
Recall that Algorithm~\ref{alg:alg2} performs the update $x_{k+1} = x_k - \omega  \mB^{-1} \tilde{\mZ}_{k} (x_k-x_*)$, where $\tilde{\mZ}_k \eqdef \frac{1}{\mb}\sum_{i=1}^{\mb} \mZ_{ki}$. We have
\begin{eqnarray}
&& \Exp{\norm{x_{k+1}-x_*}_{\mB}^2  \;|\; x_k} \notag \\
&=& \Exp{\norm{(\mI - \omega\mB^{-1} \tilde{\mZ}_k)(x_k-x_*)}_{\mB}^2}\notag\\
& =& \Exp{(x_k-x_*)^\top (\mI - \omega \tilde{\mZ}_k\mB^{-1} )\mB(\mI - \omega\mB^{-1} \tilde{\mZ}_k)(x_k-x_*)}\notag\\
&\overset{\eqref{eq:ZBZ}}{=} & \Exp{(x_k-x_*)^\top (\mB - 2\omega \tilde{\mZ}_k + \omega^2 \tilde{\mZ}_k \mB^{-1} \tilde{\mZ}_k (x_k-x_*)}\notag\\
&=&(x_k-x_*)^\top \left(\mB - 2\omega \Exp{\mZ} + \omega^2 \Exp{\tilde{\mZ}_k \mB^{-1} \tilde{\mZ}_k} \right)(x_k-x_*).\label{eq:ihs8y98y98ys}
\end{eqnarray}
Next, we can write
$
\tilde{\mZ}_k \mB^{-1} \tilde{\mZ}_k = \frac{1}{\mb^2} \left(\sum_{i=1}^{\mb} \mZ_{ki} \mB^{-1} \mZ_{ki} + \sum_{(i,j)\;:\; i\neq j} \mZ_{ki} \mB^{-1} \mZ_{kj}\right).
$
Since $\mZ_{ki} \mB^{-1} \mZ_{ki} = \mZ_{ki}$, and because $\mZ_{ki}$ and $\mZ_{kj}$ are independent for $i\neq j$, we have
\begin{eqnarray}\Exp{ \tilde{\mZ}_k \mB^{-1} \tilde{\mZ}_k } &=& \frac{1}{\mb^2} \left( \mb \Exp{\mZ} +  (\mb^2 - \mb) \Exp{\mZ} \mB^{-1} \Exp{\mZ}\right) \notag \\
& \preceq & \left(\frac{1}{\mb}   + \left(1 - \frac{1}{\mb}\right) \lambda_{\max}\right) \Exp{\mZ},\label{eq:iugs7334}
\end{eqnarray}
where we have used the estimate $\Exp{\mZ} \mB^{-1} \Exp{\mZ} \preceq \lambda_{\max} \Exp{\mZ}$, which follows from the bound $\mW^2 \leq \lambda_{\max} \mW$. Plugging \eqref{eq:iugs7334} into \eqref{eq:ihs8y98y98ys}, and noting that $\|x_k-x_*\|_{\Exp{\mZ}}^2 = 2 f(x_k)$, we obtain:
\begin{eqnarray*}
\Exp{\norm{x_{k+1}-x_*}_{\mB}^2\;|\; x_k} &\leq &
\|x_k-x_*\|_{\mB}^2 - \left[2\omega  - \omega^2 \left(\frac{1}{\mb}   + \left(1 - \frac{1}{\mb}\right) \lambda_{\max}\right)\right] 2 f(x_k)\\
& \overset{\eqref{eq:jsguygusopoipoip} }{\leq}& \rho(\omega,\mb)\|x_k-x_*\|_{\mB}^2 .
\end{eqnarray*}
The inequality involving $f$ is shown in the same way as in Theorem~\ref{thm:complexity_f_2}.
\end{proof}

Since $\lambda_{\max}\leq 1$,  $K$ is a non-increasing function of $\tau$, with $K(1) = 1/\lambda_{\min}^+$ and $K(\infty)\eqdef \lim_{\tau\to \infty} K(\tau) = \lambda_{\max}/\lambda_{\min}^+ $. In the asymptotic regime $\tau \to \infty$, Algorithm~\ref{alg:alg2} becomes gradient descent for minimizing $f$, and $K(\infty)$ is the standard rate of gradient descent.
The quantity $\frac{K(1)}{K(\infty)}=\frac{1}{\lambda_{\max}}$ controls the maximum (guaranteed) speedup in the iteration complexity achievable by increasing $\tau$.  

Note that for $\tau \geq 1/\lambda_{\max}$, we get $K(\tau) \leq (2-\lambda_{\max}) K(\infty) \leq 2 K(\infty)$, which is the performance of gradient descent (up to a factor of $2$). This means that it does not make sense to use a minibatch size  larger than $1/\lambda_{\max}$.

Further, notice that $K(\tau) \geq \frac{1}{\tau} K(1)$ for all $\tau$. This means that the number of iterations does not decrease linearly in the minibatch size  $\tau$. 

This also means that in a computational regime where processing $\tau$ basic method updates costs $\tau$ times as much as processing a single update, the decrease in iteration complexity can't compensate for the increase in cost per iteration, which means that the choice $\tau=1$ is optimal. On the other hand, if a parallel processor is available, a larger $\tau$ will be optimal.


\subsection{Accelerated method}\label{sec:acceleration}

In this section we develop an accelerated variant of Algorithm~\ref{alg:alg1}.  Recall that a single iteration of Algorithm~\ref{alg:alg1} takes the form $x_{k+1} = \phi_{\omega}(x_k,\mS_k)$, 
where \begin{equation}\label{eq:phi-acc}\phi_{\omega}(x,\mS)\eqdef x - \omega\mB^{-1}\mA^\top \mS (\mS^\top \mA \mB^{-1} \mA^\top \mS)^\dagger\mS^\top (\mA x - b).\end{equation}
We have seen that the convergence rate progressively improves as we increase $\omega$ from $1$ to  $ \omega_*$, which is the optimal choice. In particular, with $\omega=1$ we have the complexity  $\tilde{\cO}(1/\lambda_{\min}^+)$, while choosing $\omega=1/\lambda_{\max} = 1/\lambda_{\max}$ or $\omega = \omega_*$ leads to the improved complexity  $\tilde{\cO}(\lambda_{\max}/\lambda_{\min}^+) = \tilde{\cO}(\condnum)$.

In order to obtain further acceleration, we suggest to perform an update step in which $x_{k+1}$ depends on both $x_{k}$ and $x_{k-1}$. In particular, we take two {\em dependent} steps of Algorithm~\ref{alg:alg1}, one from $x_k$ and one from $x_{k-1}$, and take an affine combination of the results. This, the process is started with $x_0,x_1\in \R^n$, and for $k\geq 1$ involves an iteration of the form
\[x_{k+1} = \gamma \phi_{\omega}(x_k,\mS_k) + (1-\gamma)\phi_{\omega}(x_{k-1},\mS_{k-1}),\]
where  the  matrices $\{\mS_k\}$ are  independent samples from $\cD$, and $\gamma\in \R$ is an {\em acceleration parameter}. Note that by choosing $\gamma=1$ (no acceleration), we recover Algorithm~\ref{alg:alg1}. This method is formalized as Algorithm~\ref{alg:alg3}.

\begin{algorithm}[!h]
\begin{algorithmic}[1]
\State \textbf{Parameters:} distribution $\cD$ from which to sample matrices; positive definite matrix $\mB\in \R^{n \times n}$; stepsize/relaxation parameter $\omega>0$; acceleration parameter $\gamma>0$
\State Choose $x_0, x_1\in \R^n$ such that $x_0-x_1\in \range{\mB^{-1}\mA^\top}$ (for instance, choose $x_0=x_1$)
\State Draw $\mS_0\sim \cD$
\State Set $z_0 = \phi_{\omega}(x_0,\mS_0)$ 
\For {$k = 1, 2, \dots$}
	\State Draw a fresh sample $\mS_k \sim \cD$
	\State Set $z_{k} = \phi_{\omega}(x_k,\mS_k)$
	\State Set $x_{k+1} = \gamma z_k + (1-\gamma) z_{k-1}$ 		 \Comment Main update step
\EndFor
\State Output $x_k$
\end{algorithmic}
\caption{Accelerated Method}
\label{alg:alg3}
\end{algorithm}

As we shall see, by a proper combination of overrelaxation (choice of $\omega$) with acceleration (choice of $\gamma$), Algorithm~\ref{alg:alg3} enjoys the accelerated complexity of $\tilde{\cO}(\sqrt{\condnum})$.  We start with a lemma describing the evolution of the expected iterates.

\begin{lemma}[Expected iterates]\label{lem:accel1} Let $x_*$ be any solution of $\mA x = b$ and let $r_k\eqdef \Exp{x_k-x_*}$. Then for all $k$ we have the recursion
\begin{equation}\label{eq:rec1} r_{k+1} = \gamma (\mI-\omega \mB^{-1}\Exp{\mZ}) r_k + (1-\gamma) (\mI- \omega \mB^{-1}\Exp{\mZ}) r_{k-1}. \end{equation}
\end{lemma}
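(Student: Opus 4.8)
The plan is to reduce the two–step recursion defining Algorithm~\ref{alg:alg3} to a matrix recursion in the expected error, by exploiting that $\phi_\omega(\cdot,\mS)$ acts affinely on the error and that the fresh sample at each iteration is independent of the history.

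First I would use consistency, $\mA x_* = b$, to rewrite the one–step map in error coordinates: for any $\mS$ and any $x\in\R^n$,
\[
\phi_\omega(x,\mS) - x_* = (x-x_*) - \omega\mB^{-1}\mA^\top\mS\big(\mS^\top\mA\mB^{-1}\mA^\top\mS\big)^\dagger\mS^\top\mA\,(x-x_*) = \big(\mI - \omega\mB^{-1}\mZ\big)(x-x_*),
\]
where $\mZ = \mZ(\mS)$ is as in \eqref{eq:Z}. Applying this with $\mS=\mS_k$ and $\mS=\mS_{k-1}$ gives $z_k - x_* = (\mI-\omega\mB^{-1}\mZ_k)(x_k-x_*)$ and $z_{k-1}-x_* = (\mI-\omega\mB^{-1}\mZ_{k-1})(x_{k-1}-x_*)$, so that the main update step reads
\[
x_{k+1} - x_* = \gamma\big(\mI-\omega\mB^{-1}\mZ_k\big)(x_k-x_*) + (1-\gamma)\big(\mI-\omega\mB^{-1}\mZ_{k-1}\big)(x_{k-1}-x_*).
\]

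Next I would take expectations. Let $\cF_j \eqdef \sigma(\mS_0,\dots,\mS_j)$, with $\cF_{-1}$ the trivial $\sigma$-algebra; an easy induction shows $x_j$ and $z_{j-1}$ are $\cF_{j-1}$-measurable, while $\mS_j$ (hence $\mZ_j$) is independent of $\cF_{j-1}$. Since the matrix $\mI-\omega\mB^{-1}\Exp{\mZ}$ is deterministic, conditioning on $\cF_{j-1}$ and then taking total expectation yields, for every $j\ge 0$,
\[
\Exp{z_j - x_*} = \big(\mI-\omega\mB^{-1}\Exp{\mZ}\big)\Exp{x_j - x_*} = \big(\mI-\omega\mB^{-1}\Exp{\mZ}\big) r_j .
\]
Taking expectations in the displayed identity for $x_{k+1}-x_*$ and plugging in this formula for $j=k$ and $j=k-1$ gives exactly the claimed recursion~\eqref{eq:rec1}.

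There is no genuinely hard step here; the only thing requiring care is the probabilistic bookkeeping — namely that $z_{k-1}$ was already computed from $\mS_{k-1}$ and is therefore $\cF_{k-1}$-measurable, so the only fresh randomness in $x_{k+1}$ given $\cF_{k-1}$ is $\mS_k$, and that when computing $\Exp{z_j-x_*}$ the sample $\mS_j$ is independent of $x_j$. Everything else is linearity of expectation together with the affine rewriting of $\phi_\omega$ above.
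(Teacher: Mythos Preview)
Your proof is correct and follows essentially the same approach as the paper: rewrite $\phi_\omega$ in error form using $\mA x_*=b$, then take expectations and use that $\mS_j$ is independent of $x_j$ (the paper does this via the tower property conditioning on $x_k$ and $x_{k-1}$, you do it via the filtration $\cF_{j-1}$, which is the same thing). Your explicit handling of the filtration and the observation that $z_{k-1}$ is $\cF_{k-1}$-measurable is if anything slightly more careful than the paper's presentation.
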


\begin{proof} By taking expectations on both sides of $x_{k+1} = \gamma z_k + (1-\gamma) z_{k-1}$, we get $\Exp{x_{k+1}} = \gamma \Exp{\phi_{\omega}(x_k,\mS_k)} + (1-\gamma)\Exp{\phi_{\omega}(x_{k-1},\mS_{k-1})}.$ After subtracting $x_*$ from both sides, using \eqref{eq:phi-acc}, and replacing $b$ by $\mA x_*$, we get
\begin{eqnarray*}r_{k+1} &=& \gamma \Exp{(\mI - \omega\mB^{-1}\mZ_k) (x_k - x_*)} + (1-\gamma)\Exp{(\mI - \omega\mB^{-1}\mZ_{k-1}) (x_{k-1} - x_*)},
\end{eqnarray*}
where $\mZ_k= \mA^\top \mS_k (\mS_k^\top \mA \mB^{-1} \mA^\top \mS_k)^\dagger\mS_k^\top \mA$. We now use the tower property and linearity of expectation:
\begin{eqnarray*}r_{k+1}
&=&  \gamma \Exp{\Exp{(\mI - \omega\mB^{-1}\mZ_k) (x_k - x_*) \;|\; x_k}} \\
&& \qquad + (1-\gamma)\Exp{\Exp{(\mI - \omega\mB^{-1}\mZ_{k-1}) (x_{k-1} - x_*)\;|\; x_{k-1}}}\\
&=& \gamma \Exp{(\mI - \omega\mB^{-1}\Exp{\mZ}) (x_k - x_*) } + (1-\gamma)\Exp{(\mI - \omega\mB^{-1}\Exp{\mZ}) (x_{k-1} - x_*)}\\
&=&\gamma (\mI - \omega\mB^{-1}\Exp{\mZ}) r_k + (1-\gamma)(\mI - \omega\mB^{-1}\Exp{\mZ}) r_{k-1}.
\end{eqnarray*}
\end{proof}

We can now state our main complexity result. Note that the optimal choice of parameters, covered in case (i), leads to a rate which depends on the square root of the condition number.

\begin{theorem}[Complexity of Algorithm~\ref{alg:alg3}] \label{thm:main-accelerated}
Let Assumption~\ref{ass:main} be satisfied and let $\{ x_k \}_{k=0}^\infty$ be the sequence of random iterates produced by Algorithm~\ref{alg:alg3}, started with $x_0, x_1\in \R^n $ satisfying the relation $x_0-x_1\in \range{\mB^{-1}\mA^\top}$, with relaxation parameter 
$0<\omega \leq 1/\lambda_{\max}$ and acceleration parameter  $\gamma =  2/(1+\sqrt{\mu})$, where $\mu \in (0,\omega\lambda_{\min}^+)$. Let $x_* = \Pi^{\mB}_{\cL}(x_0)$. Then there exists a constant $C>0$, such that for all $k\geq 2$ we have
\begin{equation}\label{eq:rate-accel}\|\Exp{x_k - x_*}\|_{\mB}^2\leq  (1-\sqrt{\mu})^{2k} C.\end{equation}
 \begin{enumerate}
\item[(i)] If we choose $\omega = 1/\lambda_{\max}$ (overrelaxation), then we can pick $\mu =0.99/\condnum$ (recall that $\condnum = \lambda_{\max}/\lambda_{\min}^+$ is the condition number), which leads to the rate
\[\left\|\Exp{x_k - x_*}\right\|_{\mB}^2\leq  \left(1-\sqrt{0.99 \lambda_{\min}^+ /\lambda_{\max} } \right)^{2k} C.\]
\item[(ii)] If we choose $\omega=1$ (no overrelaxation), then we can pick $\mu=0.99 \lambda_{\min}^+$, which leads to the rate
\[\|\Exp{x_k - x_*}\|_{\mB}^2\leq  \left(1-\sqrt{0.99\lambda_{\min}^+}\right)^{2k} C.\]
\end{enumerate}
\end{theorem}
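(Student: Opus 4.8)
The plan is to diagonalise the vector recursion \eqref{eq:rec1} of Lemma~\ref{lem:accel1} into $n$ decoupled scalar second-order recursions and analyse each of them through its characteristic polynomial. Applying the linear change of variables $h\mapsto \mU^\top\mB^{1/2}h$ to \eqref{eq:rec1} and writing $s^{(i)}_k \eqdef u_i^\top\mB^{1/2}r_k$ and $\delta_i\eqdef 1-\omega\lambda_i$, one obtains, for every $i$ and every $k\geq 1$,
\begin{equation*}
s^{(i)}_{k+1} = \gamma\delta_i\, s^{(i)}_k + (1-\gamma)\delta_i\, s^{(i)}_{k-1}.
\end{equation*}
For the indices with $\lambda_i=0$ I would first observe that $s^{(i)}_0=s^{(i)}_1=0$, whence $s^{(i)}_k=0$ for all $k$: indeed $x_0-x_*\in\range{\mB^{-1}\mA^\top}$ by \eqref{eq:project_linear} (since $x_*=\Pi^{\mB}_{\cL}(x_0)$), and $x_1-x_*=(x_1-x_0)+(x_0-x_*)\in\range{\mB^{-1}\mA^\top}$ by the hypothesis on the starting points, so by Theorem~\ref{thm:exactness} (equivalently Lemma~\ref{lem:whath_appens_with_zero_eigenvalues}) both $u_i^\top\mB^{1/2}(x_0-x_*)$ and $u_i^\top\mB^{1/2}(x_1-x_*)$ vanish whenever $\lambda_i=0$; and $r_0=x_0-x_*$, $r_1=x_1-x_*$ are deterministic.

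The core of the argument is the case $\lambda_i>0$. Here I would study the characteristic polynomial $p_i(t)=t^2-\gamma\delta_i\, t-(1-\gamma)\delta_i$. Its discriminant equals $\delta_i\big(\gamma^2\delta_i+4(1-\gamma)\big)$, and substituting $\gamma=2/(1+\sqrt{\mu})$ one computes $4(\gamma-1)/\gamma^2=1-\mu$; hence the discriminant is negative precisely when $\delta_i<1-\mu$, i.e.\ when $\omega\lambda_i>\mu$, which holds for every $i$ with $\lambda_i>0$ because $\mu<\omega\lambda_{\min}^+\leq\omega\lambda_i$ (the subcase $\delta_i=0$ being trivial, and $\delta_i\geq 0$ always, since $\omega\leq 1/\lambda_{\max}$). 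Therefore the two roots of $p_i$ are complex conjugates with common modulus $\sqrt{(\gamma-1)\delta_i}$ (using $\gamma>1$, which follows from $\mu<1$), and a further short computation with $\gamma=2/(1+\sqrt{\mu})$ and $\delta_i=1-\omega\lambda_i<1-\mu$ gives $(\gamma-1)\delta_i=\tfrac{1-\sqrt{\mu}}{1+\sqrt{\mu}}\,\delta_i<(1-\sqrt{\mu})^2$, so every root of $p_i$ has modulus strictly below $1-\sqrt{\mu}$.

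With the roots controlled, each scalar recursion admits a closed form $s^{(i)}_k=\rho_i^k\big(A_i\cos(k\theta_i)+B_i\sin(k\theta_i)\big)$, where $\rho_i<1-\sqrt{\mu}$ and $\theta_i$ are the modulus and argument of the roots of $p_i$, and $A_i,B_i$ are real constants determined by $s^{(i)}_0,s^{(i)}_1$ (with the degenerate subcase $\delta_i=0$ giving $s^{(i)}_k=0$ for $k\geq 2$). Hence $|s^{(i)}_k|^2\leq c_i(1-\sqrt{\mu})^{2k}$ with $c_i\eqdef(|A_i|+|B_i|)^2$, and summing over the finitely many coordinates,
\begin{equation*}
\|\Exp{x_k-x_*}\|_{\mB}^2=\|r_k\|_{\mB}^2=\sum_{i=1}^n\big(s^{(i)}_k\big)^2\leq(1-\sqrt{\mu})^{2k}\sum_{i:\,\lambda_i>0}c_i=(1-\sqrt{\mu})^{2k}\,C,
\end{equation*}
with $C\eqdef\sum_{i:\lambda_i>0}c_i$, which is \eqref{eq:rate-accel} (and $\mathrm{C}$ is finite and $k$-independent since the sum is finite). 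I expect the bookkeeping of the constants $A_i,B_i$ in the complex-root regime to be the fiddliest point — in particular arguing they remain finite (bounding $1/\sin\theta_i$ away from $0$ over the finite index set, or absorbing the degenerate $\theta_i=0$, $\delta_i=0$ cases separately), or, more robustly, replacing the closed form by a companion-matrix reformulation: stack $(r_k,r_{k-1})$, decompose the transition operator into $2\times 2$ companion blocks with first row $(\gamma\delta_i,(1-\gamma)\delta_i)$ and second row $(1,0)$ whose eigenvalues are exactly the roots of $p_i$, note each block is diagonalisable (distinct eigenvalues when $\delta_i>0$), and bound its $k$-th power by $(1-\sqrt{\mu})^k$ times the conditioning of its eigenbasis. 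The spectral estimates themselves are elementary. Finally, parts (i) and (ii) amount to checking that the stated $\mu$ lies in $(0,\omega\lambda_{\min}^+)$: for $\omega=1/\lambda_{\max}$ we have $\omega\lambda_{\min}^+=1/\condnum$, so $\mu=0.99/\condnum$ is admissible and $\sqrt{\mu}=\sqrt{0.99\lambda_{\min}^+/\lambda_{\max}}$; for $\omega=1$ we have $\omega\lambda_{\min}^+=\lambda_{\min}^+$, so $\mu=0.99\lambda_{\min}^+$ is admissible and $\sqrt{\mu}=\sqrt{0.99\lambda_{\min}^+}$. Substituting these into \eqref{eq:rate-accel} yields the two displayed rates.
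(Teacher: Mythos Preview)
Your proposal is correct and follows essentially the same approach as the paper: diagonalise the recursion of Lemma~\ref{lem:accel1} via $h\mapsto\mU^\top\mB^{1/2}h$, eliminate the $\lambda_i=0$ coordinates using exactness and the starting-point condition (the paper's Lemma~\ref{lem:whath_appens_with_zero_eigenvalues}), and for $\lambda_i>0$ show the characteristic roots are complex with modulus $\sqrt{(\gamma-1)\delta_i}<1-\sqrt{\mu}$. The paper packages the closed-form step as Lemma~\ref{lemma:DS} and does not dwell on the finiteness of the constants $C_0,C_1$ (they are determined by the two initial values and a nonzero $\sin\theta_i$, and there are only finitely many $i$), so your extra caution there is unnecessary but harmless.
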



\begin{proof}
Multiplying the identity in Lemma~\ref{lem:accel1} from the left by $\mB^{1/2}$, we obtain
\begin{eqnarray*}
\mB^{1/2} r_{k+1}   &=& \gamma  \left(
\mI   - \omega\mB^{-1/2} \Exp{\mZ} \mB^{-1/2}     
 \right) \mB^{1/2} r_k \\
 &&\quad
  + (1-\gamma)    \left( 
 \mI    - \omega\mB^{-1/2} \Exp{\mZ} \mB^{-1/2}    
   \right) \mB^{1/2} r_{k-1} .
\end{eqnarray*}
Plugging the eigenvalue decomposition $\mU \Lambda \mU^\top$ of  $\mB^{-1/2} \Exp{\mZ} \mB^{-1/2}$ into the above, and multiplying both sides from the left by $\mU^\top$, we get
\begin{equation}
\mU^\top  \mB^{1/2} r_{k+1}  
=  \gamma  \left(
 \mI   -  \omega \Lambda      
 \right) \mU^\top  \mB^{1/2} r_k
  + (1-\gamma)    ( 
  \mI    -  \omega\Lambda     
   ) \mU^\top  \mB^{1/2} r_{k-1} .
   \label{eq:asfdjajflkjsafa} 
\end{equation}

Now if we denote $w_k = \mU^\top  \mB^{1/2} r_k\in \R^n$, \eqref{eq:asfdjajflkjsafa} becomes 
separable in the coordinates of $w$:
\begin{equation}\label{asfsafsafsafa}
 w_{k+1} = \gamma (\mI-\omega\Lambda) w_k 
                   + (1-\gamma) (\mI-\omega\Lambda) w_{k-1}.
\end{equation}

Writing this coordinate-by-coordinate (with $w_k^i$ indicating the $i$th coordinate of $w_k$), we get
\begin{equation}
w_{k+1}^i  
=  \gamma  \left(
 1   -   \omega\lambda_i     
 \right) w_k^i
  + (1-\gamma)     ( 
  1    -  \omega\lambda_i  
   ) w_{k-1}^i, \quad i=1,2,\dots,n.
\label{eq:afdjfjaosfjsafa}
\end{equation}

We now fix $i$ and analyze recursion \eqref{eq:afdjfjaosfjsafa}. We can use  Lemma~\ref{lemma:DS}
with $E_i = \gamma (1-\omega\lambda_i)$ and $F_i = (1-\gamma) (1-\omega\lambda_i)$.  Now recall that  $0\leq\lambda_i\leq 1$ for all $i$, and $\lambda_{\min}^+>0$. Since we assume that $0<\omega < 1/\lambda_{\max}$, we know that $ 0<\omega\lambda_i \leq 1$ for all $i$ for which $\lambda_i>0$, and $\omega \lambda_i = 0$ for those $i$ for which $\lambda_i=0$. Therefore,  it is enough to consider the following 3 cases:
\begin{enumerate}
\item[(1)] $\omega\lambda_i = 1$. In this case we see from \eqref{eq:afdjfjaosfjsafa} that
$w_k^i = 0$ for all $k\geq 2$.

\item[(2)] $\omega\lambda_i = 0$. Since, by assumption, $x_0-x_1\in \range{\mB^{-1}\mA^\top}$, it follows that $\Pi^{\mB}_{\cL}(x_0) = \Pi^{\mB}_{\cL}(x_1)$. All our arguments up to this point hold for arbitrary $x_*\in \cL$. However, we now choose $x_* = \Pi^{\mB}_{\cL}(x_0) = \Pi^{\mB}_{\cL}(x_1)$. Invoking Lemma~\ref{lem:whath_appens_with_zero_eigenvalues} twice, once for $x=x_0$ and then for $x=x_1$, we conclude that $w_0^i=u_i^\top \mB^{1/2}(x_0-x_*) = 0$ and $w_1^i = u_i^\top \mB^{1/2}(x_1-x_*)=0$. In view of recursion \eqref{eq:afdjfjaosfjsafa}, we conclude that $\omega_k^i=0$ for all $k\geq 0$.

\item[(3)] $0<\omega\lambda_i < 1$. In this case we have
\begin{eqnarray*}
E_i^2 + 4F_i &=&
\gamma^2 (1-\omega\lambda_i)^2
 +4(1-\gamma) (1-\omega\lambda_i) 
= (1-\omega\lambda_i)
\left(
(2-\gamma)^2
-
\omega\lambda_i \gamma^2
\right)
\\
&=& (1-\omega\lambda_i)
\left(
\left(\frac{2\sqrt{\mu}}{1+\sqrt{\mu}}\right)^2
-
\omega\lambda_i 
\left(\frac{2}{1+\sqrt{\mu}}\right)^2
\right)
 \\
 &=& 4 \frac{(1-\omega\lambda_i)}{
(1+\sqrt{\mu})^2}
\left(
\mu
-
\omega\lambda_i
\right)
< 0,
\end{eqnarray*}
where the last inequality follows from the assumption $\mu < \omega \lambda_{\min}^+$. Therefore, we can apply Lemma~\ref{lemma:DS}, using which we can deduce the bound
\begin{eqnarray}
 w_{k}^i 
 &= &
 2 M_i^k \left( X_i \cos(\theta k) + Y_i \sin(\theta k)\right) \label{eq:nbifgisg89gu9f}
 \\
 &\leq & 
  2 \left(  \sqrt{    \frac{E_i^2}{4}  +  \frac{  -E_i^2 - 4 F_i}{4}                      }         \right)^k \sqrt{X_i^2 + Y_i^2} \sqrt{\cos^2 (\theta k) + \sin^2 (\theta k) }
\notag\\
&=& 
  2\left(  
  \sqrt{     -  F_i                      }         \right)^k \sqrt{X_i^2 + Y_i^2} 
\quad = \quad
  2 \left(   
  \sqrt{           \frac{1-\sqrt{\mu}}{1+\sqrt{\mu}}
     (1-\omega\lambda_i)                       }         \right)^k \sqrt{X_i^2 + Y_i^2}  \notag  \\   
&\leq &
  2 \left(   
  \sqrt{           \frac{1-\sqrt{\mu}}{1+\sqrt{\mu}}
     (1-\sqrt{\mu}) (1+\sqrt{\mu})                      }         \right)^k \sqrt{X_i^2 + Y_i^2}  \notag 
\\
& = &    2 \left(   
   1-\sqrt{\mu} \right)^k \sqrt{X_i^2 + Y_i^2} .
   \label{afsdfsafsa}       
\end{eqnarray}
\end{enumerate}

 Putting everything together, for all $k\geq 2$ we have
\begin{eqnarray*}
\|r_k\|_{\mB}^2 &=& \|\Exp{x_k - x_*}\|_{\mB}^2 = \| \mU^\top  \mB^{1/2} \Exp{x_k-x_*}\|^2 
= \| w_k \|^2 = \sum_{i=1}^n (w_k^i)^2 \\
&\overset{\eqref{afsdfsafsa}}{\leq} &
\sum_{i: \lambda_i = 0} \underbrace{(w_0^i)^2}_{=0}
+\sum_{i: \lambda_i = 1} \underbrace{(w_0^i)^2}_{=0} + \sum_{i: 0 < \lambda_i < 1} 4 (1-\sqrt{\mu})^{2k} (X_i^2 + Y_i^2)  \\ 
&=& 4 (1-\sqrt{\mu})^{2k} \sum_{i: 0< \lambda_i < 1}  (X_i^2 + Y_i^2).
\end{eqnarray*}
\end{proof}

An estimate of the constant $C$ is provided in Proposition~\ref{prop:C}  in the Appendix. We do not have a result on L2 convergence. We have tried to obtain an accelerated rate in the L2 sense, but were not successful.\footnote{In October 2017---slightly more than a year after the first version of this paper was circulated---an accelerated rate in the L2 sense was recently proved in \cite{RT-acc-2017}, but for a different method. The result was subsequently generalized in \cite{ASBFGS} in several ways, and applied to devising the first accelerated quasi-Newton matrix inversion formulas.}

\section{Conclusion and Extensions}

\subsection{Conclusion}

We have developed a generic scheme for  reformulating any linear system as a {\em stochastic problem}, which has several seemingly different but nevertheless equivalent  interpretations: stochastic optimization problem, stochastic linear system, stochastic fixed point problem, and probabilistic intersection problem.  While stochastic optimization is a broadly studied field with rich history, the concepts of stochastic linear system, stochastic fixed point problem  and probablistic intersection appear to be new.

We give sufficient, and necessary and sufficient conditions for the reformulation to be exact, i.e., for the solution set of the reformulation to exactly match the solution set of the linear system. To the best of our knowledge, this is is first systematic study of stochastic reformulations of linear systems. Further, we have developed three algorithms---basic, parallel and accelerated methods---to solve the stochastic reformulations. We have studied the convergence of expected iterates, L2 convergence, converge of iterate averages,  and convergence of $f$.  Our methods recover an array of existing randomized algorithms for solving linear systems in special cases,  including several variants of the randomized Kaczmarz method \cite{Strohmer2009}, randomized coordinate descent \cite{Leventhal:2008:RMLC}, and all the methods developed in \cite{SIMAX2015,SDA}.

\subsection{Extensions}

At the time of revising this manuscript for final publication\footnote{We have submitted the final revision in December 2019, which is more than 3 years since the first draft of this paper was written.} our work {\em has already been extended} in several ways, and many of these extensions were already published. We shall now briefly comment on some of them.


\paragraph{Convex feasibility} As shown by Necoara et al~\cite{SPM}, the results obtained in our paper for the parallel method generalize to the more general convex feasibility problem. Their paper is motivated by our work; indeed, Necoara et al~\cite{SPM} extend our stochastic reformulations to the convex feasibility setting. The generalization is tight. Their results resolve a major open problem in the convex feasibility literature on the efficiency of extrapolated parallel projection methods. 

\paragraph{Quasi-Newton literature} While we were unable to obtain an accelerated method for the quantity $\Exp{\|x_k-x_*\|^2_{\mB}}$, this problem was solved in \cite{RT-acc-2017}, and later further generalized by Gower et al~\cite{ASBFGS}.  As a by-product, they develop the first accelerated (and provably so) quasi-Newton matrix inversion formulae, despite half of a century of research on quasi-Newton methods. 

\paragraph{Variance reduction} A major open problem in the variance reduction literature for finite-sum optimization problems is shedding light on the mechanism behind variance reduction. Motivated by our work, Gower et el~\cite{JacSketch} develop a {\em controlled} stochastic reformulation of finite sum problems, which enables them to design the JacSketch method. In so doing, they prove that the variance reduction mechanism is nothing else than applying one step of the basic method (developed in our paper)  to a sequence of linear systems whose solution is  the Jacobian of a certain function evaluated at the latest iterate. As a consequence of their general analysis, they resolve a conjecture related to the optimal convergence rate of the celebrated SAGA method~\cite{SAGA} with importance sampling.

\paragraph{SGD} The results in our paper were instrumental in the development of the tightest known analysis of the SGD method by Gower et al~\cite{SGD-AS} -- the key method behind training supervised ML models. In  contrast with previous works, the analysis in \cite{SGD-AS} does not rely on any boundedness of the second moment assumptions, and is the first SGD analysis which recovers the rate of gradient descent as a special case. Their results rely on our paper in several ways: they generalize our basic method to the problem of  minimizing a quasi strongly convex function formed as the average of smooth functions. Their expected smoothness assumption---key to their results---was first introduced in our paper. In fact, in our paper it holds as an identity, while in general it holds as an inequality. Further, their development heavily relies on a new stochastic reformulation concept that applies to finite-sum problems; again, this concept was motivated by our work.

\paragraph{Stochastic spectral methods} The stochastic preconditioning idea introduced in our paper was later studied in Kovalev et al~\cite{SSCD}, who start with our general complexity results for the basic method, and proceed to develop optimized distributions $\cal D$ for the problem they consider. This leads to a new class of methods: stochastic {\em spectral} methods.

\paragraph{Average consensus} The methods in our work were further specialized (to specific linear systems and specific distributions $\cal D$) and adapted by Loizou and co-authors~\cite{NEW-PERSPECTIVE, agossip}  to  the average consensus problem -- a famous problem in the signal processing literature. They show that our methods lead to state-of-the-art average consensus methods.

\paragraph{Further extensions} Our results were also extended to include Polyak momentum~\cite{SMOMENTUM}, allow for inexact stochastic gradient computation~\cite{inexact_basic}, led to the development of the first coordinate descent methods that can handle any regularizer~\cite{SEGA}, and ultimately led to the first unification of stochastic gradient and coordinate descent methods~\cite{GJS}.

 Last but not least, we hope that our work provides a bridge across communities, including numerical linear algebra, stochastic optimization, machine learning, computational geometry, fixed point theory, applied mathematics and probability theory. We wish for our work to inspire further progress at the boundaries of these fields.

 \bibliographystyle{plain} 

 \bibliography{literature}
 
 \appendix

\section{Stochastic proximal point method}

As claimed in the introduction, here we show (see Theorem~\ref{thm:SPP} below) that the stochastic proximal point method  \eqref{eq:alg:SPP} is equivalent to stochastic gradient descent \eqref{eq:alg:SGD}.  First, we state a couple of lemmas, starting with the Sherman-Morrison-Woodbury matrix inversion formula \cite{SM1949,W1949}.

\begin{lemma}[Sherman-Morrison-Woodbury] \label{lem:098y9f098} Let $\mM \in \R^{n\times n}$, $\mC\in \R^{n\times q}$, $\mN \in \R^{q\times q}$ and $\mD \in \R^{q\times n}$, with $\mM$ and $\mN$ being invertible. Then 
\[(\mM + \mC \mN \mD)^{-1} = \mM^{-1} - \mM^{-1}\mC \left( \mN^{-1} + \mD \mM^{-1} \mC \right)^{-1}\mD \mM^{-1}.\]
\end{lemma}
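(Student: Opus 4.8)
The plan is to verify the identity by direct multiplication rather than by any structural argument. Write $\mR \eqdef \mM + \mC\mN\mD$ and let $\mX \eqdef \mM^{-1} - \mM^{-1}\mC\mK^{-1}\mD\mM^{-1}$ denote the claimed inverse, where $\mK \eqdef \mN^{-1} + \mD\mM^{-1}\mC$; the right-hand side of the asserted formula is well defined precisely when $\mK$ is invertible, which I take as part of the hypothesis. Since $\mR$ is square, it suffices to check that $\mR\mX = \mI$: a right inverse of a square matrix is automatically its two-sided inverse, and this simultaneously certifies that $\mR$ itself is invertible.

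The key algebraic step is the substitution $\mD\mM^{-1}\mC = \mK - \mN^{-1}$, which lets the ``middle'' terms collapse. First I would expand
\[
\mR\mX = \mM\mX + \mC\mN\mD\mX = \left(\mI - \mC\mK^{-1}\mD\mM^{-1}\right) + \left(\mC\mN\mD\mM^{-1} - \mC\mN\mD\mM^{-1}\mC\mK^{-1}\mD\mM^{-1}\right).
\]
Then, using $\mN\mD\mM^{-1}\mC = \mN(\mK-\mN^{-1}) = \mN\mK - \mI$, the last summand becomes
\[
\mC\mN\mD\mM^{-1} - \mC(\mN\mK-\mI)\mK^{-1}\mD\mM^{-1} = \mC\mN\mD\mM^{-1} - \mC\mN\mD\mM^{-1} + \mC\mK^{-1}\mD\mM^{-1} = \mC\mK^{-1}\mD\mM^{-1}.
\]
Substituting back gives $\mR\mX = \mI - \mC\mK^{-1}\mD\mM^{-1} + \mC\mK^{-1}\mD\mM^{-1} = \mI$, completing the argument.

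I do not anticipate any real obstacle here: the statement is a routine identity whose only subtle points are the implicit invertibility of $\mN^{-1}+\mD\mM^{-1}\mC$ (needed merely for the formula to make sense) and the remark that a one-sided inverse of a square matrix is a genuine inverse, so that checking $\mR\mX=\mI$ suffices and no separate verification of $\mX\mR=\mI$ is required. If one prefers not to invoke that fact, the symmetric computation $\mX\mR = \mI$ proceeds identically, again via $\mD\mM^{-1}\mC = \mK-\mN^{-1}$.
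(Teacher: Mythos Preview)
Your verification is correct. The paper does not actually prove this lemma; it simply states the Sherman--Morrison--Woodbury identity with citations to the original sources, so your direct multiplication argument supplies a proof where the paper gives none.
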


The next result, Lemma~\ref{lem:ih8hjsuspo}, is trivially true if $\mM$ is positive definite. Indeed, in that case, $(\mM^\dagger)^{1/2} \mM  (\mM^\dagger)^{1/2} = \mI$, and the statement follows. However, in general, $(\mM^\dagger)^{1/2} \mM  (\mM^\dagger)^{1/2}$ is not equal to the identity; the lemma therefore says that the expression on the left hand side still behaves as if it was.

\begin{lemma}\label{lem:ih8hjsuspo} Let $\mM$ be a symmetric positive semidefinite matrix. Then for all $\mu>0$ we have the identity:
\begin{equation}\label{eq:iugs876tgug78s}(\mM^\dagger)^{1/2} \left(\mI + \frac{1}{\mu} (\mM^\dagger)^{1/2} \mM (\mM^\dagger)^{1/2}\right)^{-1}(\mM^\dagger)^{1/2} = \frac{\mu}{1+\mu} \mM^\dagger.\end{equation}
\end{lemma}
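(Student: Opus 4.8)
The plan is to reduce everything to a simultaneous eigenbasis. Since $\mM$ is symmetric positive semidefinite, write its spectral decomposition $\mM = \mQ \Sigma \mQ^\top$ with $\mQ$ orthogonal and $\Sigma = \Diag{\sigma_1,\dots,\sigma_r,0,\dots,0}$, $\sigma_i>0$. Then $\mM^\dagger = \mQ \Sigma^\dagger \mQ^\top$ and $(\mM^\dagger)^{1/2} = \mQ (\Sigma^\dagger)^{1/2} \mQ^\top$, and the key auxiliary observation is that
\[
(\mM^\dagger)^{1/2}\mM(\mM^\dagger)^{1/2} = \mQ\, (\Sigma^\dagger)^{1/2}\Sigma(\Sigma^\dagger)^{1/2}\,\mQ^\top = \mQ\,\mP\,\mQ^\top \eqdef \mP_\mM,
\]
where $\mP \eqdef \Diag{1,\dots,1,0,\dots,0}$ ($r$ ones), i.e. $\mP_\mM$ is the orthogonal projector onto $\range{\mM}$. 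All the matrices appearing in the statement — $(\mM^\dagger)^{1/2}$, $\mM^\dagger$, $\mM$, and $\mP_\mM$ — are thus simultaneously diagonalized by $\mQ$, which is what makes the computation routine.

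Next I would invert the middle factor. Because $\mP_\mM^2 = \mP_\mM$, for any $a>0$ we have $(\mI + a\mP_\mM)^{-1} = \mI - \tfrac{a}{1+a}\mP_\mM$ (check by multiplying out). Applying this with $a = 1/\mu$ gives
\[
\left(\mI + \tfrac{1}{\mu}(\mM^\dagger)^{1/2}\mM(\mM^\dagger)^{1/2}\right)^{-1} = \mI - \tfrac{1}{\mu+1}\mP_\mM .
\]
(In particular the inverse exists, since $\mI + \tfrac1\mu\mP_\mM \succ 0$.) Substituting this into the left-hand side of \eqref{eq:iugs876tgug78s} yields
\[
(\mM^\dagger)^{1/2}\left(\mI - \tfrac{1}{\mu+1}\mP_\mM\right)(\mM^\dagger)^{1/2} = \mM^\dagger - \tfrac{1}{\mu+1}\,(\mM^\dagger)^{1/2}\mP_\mM(\mM^\dagger)^{1/2}.
\]

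Finally I would simplify the last term. Since $\mP_\mM$ acts as the identity on $\range{\mM} = \range{\mM^\dagger} = \range{(\mM^\dagger)^{1/2}}$ and annihilates its orthogonal complement, $(\mM^\dagger)^{1/2}\mP_\mM(\mM^\dagger)^{1/2} = \mP_\mM\mM^\dagger = \mM^\dagger$; concretely, in the $\mQ$-basis this is $(\Sigma^\dagger)^{1/2}\mP(\Sigma^\dagger)^{1/2} = \Sigma^\dagger$ entrywise. Hence the expression equals $\mM^\dagger - \tfrac{1}{\mu+1}\mM^\dagger = \tfrac{\mu}{\mu+1}\mM^\dagger$, as claimed. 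I do not anticipate a genuine obstacle here; the only point that needs a word of care is the identification $(\mM^\dagger)^{1/2}\mM(\mM^\dagger)^{1/2} = \mP_\mM$ and the consequent commutativity, after which everything is a one-line diagonal computation. (Alternatively one could invoke Lemma~\ref{lem:098y9f098} with $\mM_{\text{SMW}} = \mI$, $\mC = \mD^\top = (\mM^\dagger)^{1/2}$, $\mN = \tfrac1\mu\mM$, but handling the pseudoinverse of a possibly singular $\mM$ there is more delicate than the spectral argument, so I would prefer the route above.)
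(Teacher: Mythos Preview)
Your proof is correct and follows essentially the same route as the paper's: both arguments diagonalize $\mM$ via its spectral decomposition and reduce the identity to a diagonal (entrywise) computation. Your packaging of the middle step via the projection identity $(\mI + a\mP_\mM)^{-1} = \mI - \tfrac{a}{1+a}\mP_\mM$ is a slightly cleaner way to organize the same entrywise check the paper performs directly.
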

\begin{proof} Let $\mM = \mU \mD \mU^\top$ be the eigenvalue decomposition of $\mM$. Then $\mM^\dagger = \mU \mD^\dagger \mU^\top$, and it is easy to show that identity \eqref{eq:iugs876tgug78s} holds if it holds for $\mM$ being diagonal. If $\mM$ is diagonal, then matrices on both sides of  \eqref{eq:iugs876tgug78s}  are diagonal, which means we can compare the individual diagonal entries. It is easy to see that if $\mM_{ii}=0$, then the $i$th diagonal element of the matrices on both sides of \eqref{eq:iugs876tgug78s} is zero. If $\mM_{ii} >0$, then $i$th diagonal element of the matrix on the  left hand side of \eqref{eq:iugs876tgug78s} is
\[\mM_{ii}^{-1/2} \left(1 + \frac{1}{\mu}\right)^{-1}\mM_{ii}^{-1/2}=\mM_{ii}^{-1}\left(1 + \frac{1}{\mu}\right)^{-1} = \frac{\mu}{1+\mu} \mM_{ii}^{-1}.\]
\end{proof}

We are now ready to prove the equivalence result.

\begin{theorem} \label{thm:SPP} If $0<\omega \leq 1$, then Algorithms \eqref{eq:alg:SPP} and \eqref{eq:alg:SGD} are equivalent. That is, for every $x\in \R^n$, $\mu\geq 0$ and matrix $\mS$ with $m$ rows we have\footnote{Note that the identity trivially  holds for $\omega = 0$ if we understand the function on the right hand side in the limit sense: $\omega\to 0$ from the right. That is, $x = \arg \min_{z} \|z-x\|_{\mB}^2$.}
\[x - \omega \nabla f_{\mS}(x) = \arg\min_{z\in \R^n} f_{\mS}(z) + \frac{1-\omega}{ 2\omega}\|z-x\|_{\mB}^2 .\]
\end{theorem}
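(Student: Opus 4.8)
The plan is to compute the minimizer on the right-hand side explicitly by setting the $\mB$-gradient of the objective $\psi(z) \eqdef f_{\mS}(z) + \frac{1-\omega}{2\omega}\|z-x\|_{\mB}^2$ to zero, and then to show the resulting closed-form expression coincides with $x - \omega\nabla f_{\mS}(x)$. Using $\nabla f_{\mS}(z) = \mB^{-1}\mZ(z - x_*)$ from \eqref{eq:fder2} (for any $x_* \in \cL$), and the fact that the $\mB$-gradient of $\frac{1}{2}\|z-x\|_{\mB}^2$ is $z - x$, the optimality condition reads $\mB^{-1}\mZ(z - x_*) + \frac{1-\omega}{\omega}(z - x) = 0$. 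Multiplying through by $\mB$ and rearranging gives $\left(\mZ + \frac{1-\omega}{\omega}\mB\right) z = \mZ x_* + \frac{1-\omega}{\omega}\mB x$. Writing $\mu \eqdef \frac{1-\omega}{\omega}$ (so $\mu \geq 0$ precisely when $0 < \omega \leq 1$), the minimizer is $z = \left(\mZ + \mu\mB\right)^{-1}\left(\mZ x_* + \mu\mB x\right)$; note $\mZ + \mu\mB$ is positive definite when $\mu > 0$, so this inverse exists, and the $\omega = 1$ (i.e. $\mu = 0$) case can be handled separately (it reduces to $\mB^{-1}\mZ(z-x_*)=0$, i.e. $z = \Pi^{\mB}_{\cL_{\mS}}(x) = x - \nabla f_{\mS}(x)$ by \eqref{eq:all_equal}, matching the right-hand side with $\omega=1$).

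For $\mu > 0$, the main computational step is to simplify $\left(\mZ + \mu\mB\right)^{-1}$. Recalling \eqref{eq:Z}, $\mZ = \mA^\top\mS\,(\mS^\top\mA\mB^{-1}\mA^\top\mS)^\dagger\,\mS^\top\mA$, I would apply the Sherman–Morrison–Woodbury formula (Lemma~\ref{lem:098y9f098}) with $\mM = \mu\mB$ and the low-rank correction built from $\mC = \mA^\top\mS$, $\mN = (\mS^\top\mA\mB^{-1}\mA^\top\mS)^\dagger$, $\mD = \mS^\top\mA$. This gives
\[\left(\mu\mB + \mA^\top\mS\,\mN\,\mS^\top\mA\right)^{-1} = \frac{1}{\mu}\mB^{-1} - \frac{1}{\mu^2}\mB^{-1}\mA^\top\mS\left(\mN^{-1} + \tfrac{1}{\mu}\mS^\top\mA\mB^{-1}\mA^\top\mS\right)^{-1}\mS^\top\mA\mB^{-1}.\]
Setting $\mM' \eqdef \mS^\top\mA\mB^{-1}\mA^\top\mS$ (so $\mN = (\mM')^\dagger$), the inner factor is $\left((\mM')^\dagger + \tfrac{1}{\mu}\mM'\right)^{-1}$, but one must be careful: $\mM'$ need not be invertible, so $\mN^{-1}$ does not literally make sense and the raw SMW formula does not apply directly. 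The clean way around this is to absorb the pseudoinverse: write $\mH = \mS\,(\mM')^\dagger\,\mS^\top$ and note $(\mM')^\dagger = (\mM')^{\dagger/2}\,\mM'\,(\mM')^{\dagger/2}$ only on the range of $\mM'$; the honest route is to invoke Lemma~\ref{lem:ih8hjsuspo} with $\mM = \mM'$, which states exactly that $(\mM')^{\dagger/2}\left(\mI + \tfrac{1}{\mu}(\mM')^{\dagger/2}\mM'(\mM')^{\dagger/2}\right)^{-1}(\mM')^{\dagger/2} = \tfrac{\mu}{1+\mu}(\mM')^\dagger$. Thus after conjugating the Woodbury middle term appropriately by $(\mM')^{\dagger/2}$, the messy inverse collapses to a scalar multiple of $(\mM')^\dagger$, and hence of $\mH$.

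Carrying this through, I expect $\left(\mZ + \mu\mB\right)^{-1}$ to reduce to $\frac{1}{\mu}\mB^{-1} - \frac{1}{\mu(1+\mu)}\mB^{-1}\mZ\mB^{-1}$ (using $\mZ = \mA^\top\mH\mA$ and $\mZ\mB^{-1}\mZ = \mZ$ from \eqref{eq:ZBZ}). Substituting back, $z = \left(\tfrac{1}{\mu}\mB^{-1} - \tfrac{1}{\mu(1+\mu)}\mB^{-1}\mZ\mB^{-1}\right)\left(\mZ x_* + \mu\mB x\right)$; expanding and repeatedly using $\mZ\mB^{-1}\mZ = \mZ$ and $\mZ x_* = \mA^\top\mH\mA x_* = \mA^\top\mH b$, the terms should regroup into $x - \frac{1}{1+\mu}\mB^{-1}\mZ(x - x_*) = x - \frac{1}{1+\mu}\nabla f_{\mS}(x)$. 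Finally, since $\mu = \tfrac{1-\omega}{\omega}$ gives $\tfrac{1}{1+\mu} = \omega$, this is exactly $x - \omega\nabla f_{\mS}(x)$, completing the proof. The main obstacle is the bookkeeping around the pseudoinverse in the Woodbury step — making sure the formal manipulation is legitimate despite $\mS^\top\mA\mB^{-1}\mA^\top\mS$ being possibly singular — which is precisely why Lemma~\ref{lem:ih8hjsuspo} was isolated beforehand; once that is invoked correctly, the remainder is routine algebra with the projection identity \eqref{eq:ZBZ}.
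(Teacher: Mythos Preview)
Your proposal is correct and follows essentially the same route as the paper: set the gradient of the proximal objective to zero, invert $\mu\mB + \mZ$ via Sherman--Morrison--Woodbury, and use Lemma~\ref{lem:ih8hjsuspo} to handle the pseudoinverse of $\mS^\top\mA\mB^{-1}\mA^\top\mS$, arriving at $(\mu\mB+\mZ)^{-1}=\tfrac{1}{\mu}\mB^{-1}-\tfrac{1}{\mu(1+\mu)}\mB^{-1}\mZ\mB^{-1}$ and then collapsing everything with \eqref{eq:ZBZ}. The only cosmetic differences are that the paper first conjugates by $\mB^{-1/2}$ and absorbs the square root $\mK=((\mS^\top\mA\mB^{-1}\mA^\top\mS)^\dagger)^{1/2}$ into the outer factors so that $\mN=\mI$ in the Woodbury formula (avoiding the formal $\mN^{-1}$ issue you flagged), and that for $\omega=1$ it appeals to \eqref{eq:zero} rather than \eqref{eq:all_equal}; note in that case the minimizer is not unique, so the claim should be read as ``$x-\nabla f_{\mS}(x)$ is \emph{a} minimizer,'' as the paper's footnote makes explicit.
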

\begin{proof} The identity holds\footnote{In this case we interpret this identity as meaning that the vector on the left hand side is a minimizer of the function on the right hand side (as there may be multiple minimizers).} for $\omega=1$. This follows \eqref{eq:zero} in view of the fact that  $f_{\mS}$ is nonnegative. If $0<\omega <1$, then under the substitution $\mu = \frac{\omega-1}{\omega}$, the statement is equivalent to requiring that 
\begin{equation}\label{eq:98y98hfffs}x - \frac{1}{1+\mu} \nabla f_{\mS}(x) = \arg\min_{z\in \R^n} f_{\mS}(z) + \frac{\mu}{ 2}\|z-x\|_{\mB}^2 \end{equation}
holds for any $\mu>0$.

The minimizer of the stochastic fixed point iteration (right hand side of \eqref{eq:98y98hfffs}) can be computed by setting the gradient  to zero: $0= \mA^\top \mH (\mA z - b) + \mu \mB (z-x)$, whence
$z_* = (\mu \mB + \mA^\top \mH \mA )^{-1}(\mA^\top \mH b + \mu \mB x).$
In view of the formula for the stochastic gradient \eqref{eq:fder}, our goal is therefore to show that 
\begin{equation}\label{eq:kjhgs7hd99s}  x - \frac{1}{1+\mu}\mB^{-1}\mA^\top \mH (\mA x - b) = (\mu \mB + \mA^\top \mH \mA )^{-1}(\mA^\top \mH b + \mu \mB x).\end{equation} By comparing the terms  involving $x$ and those that do not in \eqref{eq:kjhgs7hd99s},  it is sufficient to show that
\begin{equation}\label{eq:EEE2} \mu (\mu \mB + \mA^\top \mH \mA )^{-1}\mB  = \mI - \frac{1}{1+\mu}\mB^{-1}\mA^\top \mH \mA,\end{equation}
and
\begin{equation}\label{eq:EEE1} (\mu \mB + \mA^\top \mH \mA )^{-1}\mA^\top \mH b = \frac{1}{1+\mu}\mB^{-1}\mA^\top \mH  b.\end{equation}

Let us now compute the inverse matrix in the expression defining $z_*$. First, we have
\begin{eqnarray}(\mu \mB + \mA^\top \mH \mA )^{-1} 
&=&\mB^{-1/2}\left(\mu \mI  +\mB^{-1/2}\mA^\top \mH \mA  \mB^{-1/2}\right)^{-1}\mB^{-1/2}.\label{eq:iugiu8938yhujs}
\end{eqnarray}
Let $\mK $ be the symmetric square root of the symmetric positive semidefinite matrix $(\mS^\top \mA \mB^{-1} \mA^\top \mS)^\dagger$. This means that we can write $\mH = \mS \mK^{2} \mS^\top$.
We now compute the inverse \eqref{eq:iugiu8938yhujs} by applying Lemma~\ref{lem:098y9f098} with $\mM =\mu \mI$, $\mC = \mB^{-1/2}\mA^\top \mS \mK$, $\mN = \mI$ (of appropriate size) and $\mD = \mC^\top$. First, the inverse of the matrix $\mu \mI + \mB^{-1/2}\mA^\top \mH \mA  \mB^{-1/2}$ is given by
\begin{eqnarray*}
 \frac{\mI}{\mu} - \frac{1}{\mu^2} \mB^{-1/2}\mA^\top \mS \mK \left(\mI + \frac{1}{\mu} \mK \mS^\top \mA  \mB^{-1}\mA^\top \mS \mK\right)^{-1} \mK   \mS ^\top \mA \mB^{-1/2}.
\end{eqnarray*}
In view of \eqref{eq:iugiu8938yhujs}, pre and post-multiplying both sides of the last identity by $\mB^{-1/2}$, and subsequently applying Lemma~\ref{lem:ih8hjsuspo}  with $\mM = \mS^\top \mA \mB^{-1}\mA^\top \mS$ yields
\begin{eqnarray*}
\left(\mu \mB + \mA^\top \mH \mA  \right)^{-1} &=&  \frac{\mB^{-1}}{\mu} - \frac{1}{\mu^2} \mB^{-1}\mA^\top \mS \mK \left(\mI + \frac{1}{\mu}\mK \mS^\top \mA \mB^{-1}\mA^\top \mS \mK\right)^{-1} \mK   \mS ^\top \mA \mB^{-1}\\
&\overset{\eqref{eq:iugs876tgug78s}}{=}& \frac{\mB^{-1}}{\mu} - \frac{1}{\mu^2}\mB^{-1} \mA^\top \mS \left(\frac{\mu \mK^2}{1+\mu}\right) \mS ^\top \mA \mB^{-1}\\
&=& \frac{\mB^{-1}}{\mu} - \frac{\mB^{-1} \mA^\top \mH \mA \mB^{-1}}{\mu(1+\mu)}.
\end{eqnarray*}
Given the above formula for the inverse,  \eqref{eq:EEE2} follows immediately. Identity \eqref{eq:EEE1} follows using the facts that $b = \mA x_*$ and $(\mB^{-1}\mZ)^2 = \mB^{-1}\mZ$, where $\mZ= \mA^\top \mH \mA$.
\end{proof}

\section{Smallest nonzero eigenvalue}

We are using the following inequality in the proof of Theorem~\ref{thm:alg1_complexity_second}.

\begin{lemma} \label{lem:WGWtight2} If Assumption~\ref{ass:main} holds, then for all $x\in \range{ \mB^{-1/2} \mA^\top }$ we have:
\begin{equation}\label{eq:89y9s8sss000}x^\top \mB^{-1/2} \Exp{\mZ} \mB^{-1/2} x \geq \lambda_{\min}^+(\mB^{-1/2} \Exp{\mZ} \mB^{-1/2} ) x^\top x\end{equation}
\end{lemma}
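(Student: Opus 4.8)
The plan is to reduce \eqref{eq:89y9s8sss000} to the elementary spectral fact that a symmetric positive semidefinite matrix bounds, from below, every vector in its range by the smallest nonzero eigenvalue. The only real content is to identify $\range{\mB^{-1/2}\mA^\top}$ with the range of $\mW = \mB^{-1/2}\Exp{\mZ}\mB^{-1/2}$, and this is exactly where Assumption~\ref{ass:main} enters.

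First I would invoke exactness. By Theorem~\ref{thm:exactness}(iii), Assumption~\ref{ass:main} is equivalent to $\kernel{\mW} = \kernel{\mA \mB^{-1/2}}$. Since $\mW$ is symmetric, $\range{\mW} = \kernel{\mW}^\perp$, and since for any real matrix $\kernel{\mA\mB^{-1/2}}^\perp = \range{(\mA\mB^{-1/2})^\top} = \range{\mB^{-1/2}\mA^\top}$ (using that $\mB^{-1/2}$ is symmetric), we obtain
\[
\range{\mB^{-1/2}\mA^\top} = \kernel{\mW}^\perp = \range{\mW}.
\]
Hence any $x$ as in the statement lies in $\range{\mW}$.

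Next I would expand $x$ in the eigenbasis of $\mW$. Writing $\mW = \mU\Lambda\mU^\top = \sum_{i=1}^n \lambda_i u_i u_i^\top$ as in \eqref{eq:eig_decomp}, the range of $\mW$ is the span of $\{u_i : \lambda_i>0\}$, so $x = \sum_{i:\lambda_i>0} c_i u_i$ with $c_i = u_i^\top x$, giving $x^\top x = \sum_{i:\lambda_i>0} c_i^2$ and
\[
x^\top \mW x = \sum_{i:\lambda_i>0} \lambda_i c_i^2 \;\geq\; \lambda_{\min}^+ \sum_{i:\lambda_i>0} c_i^2 \;=\; \lambda_{\min}^+\bigl(\mB^{-1/2}\Exp{\mZ}\mB^{-1/2}\bigr)\, x^\top x,
\]
which is precisely \eqref{eq:89y9s8sss000}.

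There is essentially no analytic obstacle here; the only point requiring care is the kernel/range bookkeeping. The hypothesis $x\in\range{\mB^{-1/2}\mA^\top}$ is used only to ensure that $x$ has no component along eigenvectors of $\mW$ with eigenvalue $0$, and it is Theorem~\ref{thm:exactness}(iii) that makes this translation legitimate; without exactness the lemma can fail, since such a zero-eigenvalue component of $x$ would contribute to $x^\top x$ but not to $x^\top \mW x$.
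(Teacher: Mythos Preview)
Your proof is correct and follows essentially the same approach as the paper: both arguments hinge on identifying $\range{\mB^{-1/2}\mA^\top}$ with $\range{\mW}$ via Theorem~\ref{thm:exactness}, after which the inequality is the standard fact that a symmetric positive semidefinite matrix is bounded below by $\lambda_{\min}^+$ on its own range. The only cosmetic difference is that the paper packages the spectral step as the general inequality $x^\top \mM^\top \mM x \ge \lambda_{\min}^+(\mM^\top\mM)\,x^\top x$ for $x\in\range{\mM^\top}$, applied with $\mM = (\Exp{\mZ})^{1/2}\mB^{-1/2}$, whereas you unfold the same fact explicitly in the eigenbasis of $\mW$.
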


\begin{proof} For any matrix $\mM\in \R^{m\times n}$, the inequality
$x^\top \mM^\top \mM x \geq \lambda_{\min}^+(\mM^\top \mM) x^\top x$ holds for all $ x\in \range{\mM^\top}$. Applying this  with $\mM = (\Exp{\mZ})^{1/2} \mB^{-1/2}$, we see that \eqref{eq:89y9s8sss000} holds for all $x\in \range{\mB^{-1/2}(\Exp{\mZ})^{1/2} }$.
However,
 \begin{eqnarray*}\range{ \mB^{-1/2} (\Exp{\mZ})^{1/2}  } &=& \range{ \mB^{-1/2} (\Exp{\mZ})^{1/2}  (\mB^{-1/2} (\Exp{\mZ})^{1/2} )^\top }\\
 & =& \range{\mB^{-1/2} \Exp{\mZ} \mB^{-1/2}} = \range{\mB^{-1/2}\mA^\top},\end{eqnarray*}
 where the last identity follows by combining Assumption~\ref{ass:main} and  Theorem~\ref{thm:exactness}. 
\end{proof}

\section{Linear difference equations}

The proof of Theorem~\ref{thm:main-accelerated} uses the  following  standard  linear recurrence relations result \cite{FillmoreMarx1968,Elyadi2005}.

\begin{lemma}\label{lemma:DS}
Consider the following linear homogeneous recurrence relation of degree 2 with constant coefficients: 
$
 \xi_{k+1} = E \xi_k + F \xi_{k-1},
$
with $\xi_0, \xi_1\in \R$. 
\begin{itemize}
\item[(i)] $\xi_k\to 0$ if and only if both  roots of the characteristic polynomial, $r^2 - E r^2 - F$, lie strictly inside the unit complex circle.

\item[(ii)] Assume that $E^2+4F < 0$, i.e., that both roots are complex (the roots are $\alpha+i\beta$ and $\alpha-i\beta$, where $\alpha= E/2$ and $\beta = \sqrt{-E^2 -4F}/2$). Then there are (complex) constants $X, Y$, depending on the initial conditions $\xi_0,\xi_1$, such that
$
 \xi_{k} = 
 2 M^k \left( X \cos(\theta k) + Y \sin(\theta k)\right), 
$
where $M = \sqrt{\alpha^2+\beta^2}$, and $\theta$ is such that $\alpha = M\cos (\theta)$ and $\beta = M\sin( \theta)$.
\end{itemize}
\end{lemma}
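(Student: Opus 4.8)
The plan is to invoke the standard solution theory for second-order linear homogeneous recurrences with constant coefficients (as in the cited references). First I would associate to the recurrence $\xi_{k+1} = E\xi_k + F\xi_{k-1}$ its characteristic polynomial $p(r) = r^2 - Er - F$, with roots $r_\pm = \tfrac12\bigl(E \pm \sqrt{E^2+4F}\bigr)$. When $r_+ \neq r_-$, every solution can be written uniquely as $\xi_k = A r_+^k + B r_-^k$, with $A,B$ determined by the linear system $\xi_0 = A + B$, $\xi_1 = A r_+ + B r_-$ (nonsingular precisely because $r_+\neq r_-$); when $r_+ = r_- = r$, every solution is $\xi_k = (A + Bk)r^k$. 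These facts I would verify directly: $r_\pm^k$ (resp.\ $r^k$ and $kr^k$) solve the recurrence, and the map $(A,B)\mapsto(\xi_0,\xi_1)$ is a bijection, so these exhaust the two-dimensional solution space.

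For part (i), since $|\xi_k|$ is controlled by $\max(|r_+|,|r_-|)^k$ up to a polynomial factor in $k$ (the factor $k$ appearing only in the repeated-root case), $\xi_k \to 0$ for every choice of initial data exactly when $\max(|r_+|,|r_-|) < 1$, i.e., both roots lie strictly inside the unit complex circle; conversely, if some root $r$ satisfies $|r|\geq 1$, choosing initial data so that the coefficient of $r^k$ is nonzero produces a non-vanishing solution. The only mildly delicate point here is the repeated-root case, dispatched by the observation that $k|r|^k \to 0$ iff $|r|<1$.

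For part (ii), the hypothesis $E^2 + 4F < 0$ forces $r_+ \neq r_-$ and $\sqrt{E^2+4F} = 2i\beta$ with $\beta = \tfrac12\sqrt{-E^2-4F} > 0$, so $r_\pm = \alpha \pm i\beta$ with $\alpha = E/2$. Writing these in polar form $r_\pm = M e^{\pm i\theta}$, where $M = \sqrt{\alpha^2+\beta^2}$ and $\cos\theta = \alpha/M$, $\sin\theta = \beta/M$, the general solution $\xi_k = A r_+^k + B r_-^k = M^k\bigl(A e^{ik\theta} + B e^{-ik\theta}\bigr)$ expands via Euler's formula to $M^k\bigl((A+B)\cos(k\theta) + i(A-B)\sin(k\theta)\bigr)$. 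Setting $C_0 = \tfrac12(A+B)$ and $C_1 = \tfrac{i}{2}(A-B)$ gives the claimed form $\xi_k = 2M^k\bigl(C_0\cos(k\theta) + C_1\sin(k\theta)\bigr)$, and $C_0,C_1$ are recovered from $\xi_0 = 2C_0$ and $\xi_1 = 2M(C_0\cos\theta + C_1\sin\theta)$, the latter being solvable for $C_1$ since $\sin\theta = \beta/M > 0$. Since everything here is a routine consequence of the elementary theory, I do not anticipate a genuine obstacle; the only thing to be careful about is the bookkeeping between the pairs $(A,B)$ and $(C_0,C_1)$ and noting that real initial data $\xi_0,\xi_1$ yield real constants (though the statement permits complex $C_0, C_1$ anyway).
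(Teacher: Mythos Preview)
Your argument is correct and is the standard textbook approach. Note, however, that the paper does not actually supply a proof of this lemma: it is stated as a classical result on linear recurrences and attributed to the cited references \cite{FillmoreMarx1968,Elyadi2005}, so there is no ``paper's own proof'' to compare against. Your write-up is precisely the kind of elementary verification those references contain.
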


\section{Estimation of constant $C$ from Theorem~\ref{thm:main-accelerated}}

\begin{proposition} \label{prop:C} Consider the setting of Theorem~\ref{thm:main-accelerated}(ii). In particular, we assume that $\omega=1$ and let $\mu=0.99 \lambda_{\min}^+$. Moreover, assume that $x_0=x_1$. Then for $k\geq 2$ we can choose 
\[C = (1+\sqrt{\mu})^2 \sum_{i \;:\; 0<\lambda_{i} < 1}  \frac{\lambda_i}{(\lambda_i-\mu) (1-\lambda_i)} \left(u_i^\top \mB^{1/2} (x_0-x_*)\right)^2 < 8 \kappa f(x_0),\]
 where $\kappa \eqdef \max_{0 < \lambda_i < 1} \frac{1}{(\lambda_i-\mu)(1-\lambda_i)} $. The maximum is attained either at $\lambda_i=\lambda_{\min}^+$ (the smallest nonzero eigenvalue), or for $\lambda_i$ being the largest eigenvalue that is not equal to 1.
\end{proposition}
\begin{proof}
Based on the proof of Theorem 5.3,  \begin{equation}\label{eq:nbu8fgvud98x}C = 4 \sum_{i: 0<\lambda_i <1} (X_i^2 + Y_i^2),\end{equation}
where  $X_i$ and $Y_i$ satisfy the equations (see \eqref{eq:nbifgisg89gu9f})
\begin{equation}\label{eq:n9f8f}w_k^i = 2 M_i^k ( X_i \cos (\theta_i k) + Y_i \sin (\theta_i k)), \quad k\geq 0,\end{equation}
where $w_k^i = u_i^\top \mB^{1/2} {\rm E}[x_k - x_*] $. Note that $\gamma = \frac{2}{1+\sqrt{\mu}} \in (1,2)$, $E_i=\gamma (1- \lambda_i) = \frac{2}{1+\sqrt{\mu}}(1-\lambda_i)$, $F_i=(1-\gamma)(1-\lambda_i) = -\frac{1-\sqrt{\mu}}{1+\sqrt{\mu}}(1-\lambda_i)<0$, $\alpha_i = \frac{E_i}{2} = M_i \cos \theta_i$, $  \beta_i = \frac{1}{2}\sqrt{-E_i^2-4F_i} = M_i \sin \theta_i,$ 
and $M_i = \sqrt{\alpha_i^2+\beta_i^2}=\sqrt{-F_i}  \in (0,1)$. Note that
\begin{equation}\label{eq:nb9gff8x98} 
\cos \theta_i = \frac{\alpha_i}{M_i}, \quad \text{and} \quad \sin \theta_i = \frac{\beta_i}{M_i}.\end{equation}

To identify $X_i$ and $Y_i$, it is enough to consider the first two equations in \eqref{eq:n9f8f}, i.e., $k=0,1$. For $k=0$ we get 
$w_0^i = 2 X_i$, from which we conclude that  \begin{equation} \label{eq:ni9f7g89hf0gd}X_i = \frac{1}{2} w_0^i = \frac{1}{2} u_i^\top \mB^{1/2} (x_0-x_*)\end{equation}
For $k=1$, and since $x_0=x_1$, we get  $w_0^i = w_1^i = 2M_i \left( X_i \cos \theta + Y_i \sin \theta \right) \overset{\eqref{eq:nb9gff8x98} }{=} 2 \left( X_i  \alpha_i + Y_i \beta_i \right),$
from which we get
\begin{equation} \label{eq:nbi8fg7dg9vd}
Y_i = \frac{1-\alpha_i}{\beta_i}X_i.
\end{equation}
By some arithmetic calculations, one obtains $(1-\alpha_i)^2 = \frac{(\lambda_i + \sqrt{\mu})^2}{(1+\sqrt{\mu})^2}$ and $\beta_i^2 =\frac{(\lambda_i-\mu)(1-\lambda_i)}{(1+\sqrt{\mu})^2}$, and subsequently
\[X_i^2+Y_i^2 \overset{\eqref{eq:nbi8fg7dg9vd}}{=} X_i^2\left( 1 + \frac{(1-\alpha_i)^2}{\beta_i^2}\right) = (1+\sqrt{\mu})^2  \frac{\lambda_i X_i^2}{  (\lambda_i-\mu)(1-\lambda_i)   }.\]
To obtain the formula for $C$, it remains to plug this into \eqref{eq:nbu8fgvud98x}.

The inequality $C<8 \kappa f(x_0)$ is obtained by estimating $\mu < 1$ (and hence $(1+\sqrt{\mu})^2 < 4$),  using the identity 
$f (x) =
\frac{1}{2} \sum_{i=1}^n \lambda_i \left(u_i^\top \mB^{1/2}(x -x_* )\right)^2$ (see \eqref{eq:f_spectral_decomp}), and applying the observation from the proof of Theorem~\ref{thm:main-accelerated} that $u_i^\top \mB^{1/2}(x_0 -x_* )=w_0^i=0$ for $i$ for which $\lambda_i=0$ or $\lambda_i=1$ (and $k\geq 2$).
\end{proof}

 \clearpage
\section{Notation glossary} \label{sec:notation_glossary}

\footnotesize

\begin{table}[!h]
\begin{center}
\begin{tabular}{|c|l|c|}
 \hline
 \multicolumn{3}{|c|}{{\bf The Basics} }\\
 \hline
$\mA, b$    & $m \times n$ matrix and $m\times 1$ vector & \\
                   & defining the system $\mA x =b$ & \\
$\cL$ &  $\{x\;:\; \mA x = b\}$ (solution set of the linear system) & \\
$\mB$    & $n \times n$ symmetric positive definite matrix & \\
$\langle x, y \rangle_{\mB}$ & $x^\top \mB y$ ($\mB$-inner product) & \\
$\|x\|_{\mB}$ & $\sqrt{\langle x, x \rangle_{\mB}}$ ($\mB$-norm) & \\
$\mM^{\dagger}$ & Moore-Penrose pseudoinverse of matrix $\mM$ & \\
$\mS$    & a random real matrix with $m$ rows  & \\
$\cD$    & distribution from which  $\mS$ is drawn ($\mS\sim \cD$) & \\ 
$\mH$ & $\mS (\mS^\top \mA \mB^{-1} \mA^\top \mS)^{\dagger} \mS^\top$ & \eqref{eq:H} \\
$\mZ$ & $\mA^\top \mH \mA$ & \eqref{eq:Z}\\
$\range{\mM}$ & range space of matrix $\mM$ & \\
$\kernel{\mM}$ & null space of matrix $\mM$ & \\
$\trace{\mM}$ & trace of matrix $\mM$ & \\
$\Prob(\cdot)$ & probability of an event & \\
$\Exp{\cdot}$ & expectation  & \\
 \hline
 \multicolumn{3}{|c|}{{\bf Projections}}\\
  \hline
$\Pi^{\mB}_{\cL}(x)$ & projection of $x$ onto $\cL$ in the $\mB$-norm & \eqref{eq:project_linear} \\
$\mM^{\dagger_\mB}$ & $\mB^{-1} \mM^\top (\mM \mB^{-1}\mM^\top)^\dagger$ ($\mB$-pseudoinverse of $\mM$) & \eqref{eq:B-pseudo} \\
$\mB^{-1}\mZ$ & projection matrix, in the $\mB$-norm, & \\
                          & onto $\range{\mB^{-1}\mA^\top \mS}$ & \eqref{eq:fder}  \\
 \hline
 \multicolumn{3}{|c|}{{\bf Optimization} }\\
 \hline
$\cX$ &  set of minimizers of $f$ & Thm~\ref{thm:X} \\ 
  $x_*$ & a point in $\cL$ & \\
$f_{\mS}$, $\nabla f_{\mS}$, $\nabla^2 f_{\mS}$ & stochastic function, its gradient and Hessian  & \eqref{eq:prodstoch}--\eqref{eq:v(x)}\\
$\cL_{\mS}$ & $\{x\;:\; \mS^\top \mA x = \mS^\top b\}$ (set of minimizers of $f_{\mS}$) & Lem~\ref{lem:all_sort_of_stuff_is_equal}    \\
$f$ & $\Exp{f_{\mS}}$  & \eqref{eq:min_f}, Lem~\ref{lem:f-various}\\
$\nabla f$ & gradient of $f$ w.r.t.\ the $\mB$-inner product & \\
$\nabla^2 f$ & $\mB^{-1}\Exp{\mZ}$ (Hessian of $f$ in the $\mB$-inner product) & \\
 \hline
 \multicolumn{3}{|c|}{{\bf Eigenvalues} }\\
 \hline
 $\mW$ & $\mB^{-1/2}\Exp{\mZ}\mB^{-1/2}$  & \\
             & (psd matrix with the same spectrum as $\nabla^2 f$) & \\
$\lambda_1,\dots,\lambda_n$ & eigenvalues of $\mW$  & \\
$\Lambda$ & $\Diag{\lambda_1,\dots,\lambda_n}$ (diag.\ matrix of eigenvalues) & \\
$\mU$ & $[u_1,\dots,u_n]$ (eigenvectors of $\mW$) & \\
$\mU \Lambda \mU^\top $ & eigenvalue decomposition of $\mW$ & \eqref{eq:eig_decomp} \\
$\lambda_{\max}, \lambda_{\min}^+$ & largest and smallest nonzero eigenvalues of $\mW$ & \\
$\condnum$ & $\lambda_{\max}/\lambda_{\min}^+$ (condition number of $\mW$) & \eqref{eq:condition_number}, \eqref{eq:kappa}\\
 \hline
 \multicolumn{3}{|c|}{{\bf Algorithms} }\\
 \hline
$\omega$ & relaxation parameter / stepsize & Alg~\ref{alg:alg1}--\ref{alg:alg3}  \\
$\mb$ & parallelism parameter & Alg~\ref{alg:alg2}\\
$\gamma$ & acceleration parameter &  Alg~\ref{alg:alg3}\\
\hline
\end{tabular}
\end{center}
\caption{Frequently used notation.}
\label{tbl:notation}
\end{table}

\end{document}